\documentclass[11pt]{article}
\usepackage{amsmath, amssymb, amsthm, graphicx, geometry,xcolor,float, indentfirst, url, hyperref}
\hypersetup{
    citecolor=black,
    colorlinks=true,
    linkcolor=black,
    filecolor=magenta,      
    urlcolor=blue,
    pdfpagemode=FullScreen,
    }

\newtheorem{theorem}{Theorem}[section]
\numberwithin{equation}{section} 
\newtheorem{prop}[theorem]{Proposition}

\newtheorem{lemma}[theorem]{Lemma}

\newcommand{\E}{\mathbb{E}}
\renewcommand{\P}{\mathbb{P}}
\newcommand{\Var}{\operatorname{Var}}
\newcommand{\Tr}{\operatorname{Tr}}
\newcommand{\R}{\mathbb{R}}
\newcommand{\C}{\mathbb{C}}
\newcommand{\norm}[1]{\left\|#1\right\|}
\newcommand{\abs}[1]{\left|#1\right|}

\title{Spectral Properties of Dense Barab\'asi--Albert Graphs}
\author{Steven J. Miller \\ \href{mailto:sjm1@williams.edu}{sjm1@williams.edu} \and Arman Rysmakhanov \\ \href{mailto:ar21@williams.edu}{ar21@williams.edu}}
\date{}

\begin{document}
\maketitle

\section*{Abstract}
Preferential attachment graphs model networks whose growth produces highly uneven degree distributions, describing many real-world systems. Their adjacency spectra are important because they allow graph-theoretic questions to be studied through the eigenvalues of matrices. We analyze the adjacency matrix of a dense Barab\'asi--Albert (B--A) multigraph, where the number of edges added at each step is proportional to the final number of vertices. First, we compute the large-$n$ limit of the expected adjacency matrix and show that it is described by a rank-one limiting kernel, viewed as a continuous analogue of the adjacency matrix. After centering and scaling, the fluctuations form a random matrix with a computable variance profile. Using the quadratic vector equation approach, we derive the limiting bulk spectral distribution. We also determine the asymptotic location of the leading eigenvalue generated by the rank-one mean component.

\newpage

\tableofcontents

\newpage

\section{Introduction}
\subsection{Background and related work}

Spectral properties of adjacency matrices of graphs, such as eigenvalue distributions and leading eigenvalues/eigenvectors, provide compact summaries of key graph properties, including degree-related information, walk counts, and connectivity patterns \cite{Brouwer2012}.

The Barab\'asi--Albert (B--A) model \cite{BA1999} is a canonical random graph model for networks with heavy-tailed degree distributions. In this model, vertices arrive sequentially and attach to existing vertices with probability proportional to their current degree, producing the well-known scale-free degree distribution observed in many real-world networks, such as the network of scientific collaborations \cite{Barabasi2002}, the functional connections in the human brain \cite{Eguiluz2005}, and the network of Wikipedia hyperlinks \cite{Capocci2006}. In its classical form, the process begins with a complete graph of $m_0$ vertices, and each new vertex attaches to $m \leq m_0$ existing vertices with probability proportional to their degrees, as described in \cite[Section 3.1.1]{Zeng2017ScienceOfScience}.

Empirical and theoretical investigations of spectral properties of scale-free networks have shown significant differences from classical random matrix behavior. Numerical and heuristic studies indicate that preferential attachment networks exhibit non-semicircular spectral densities and structural features strongly influenced by degree heterogeneity \cite{Dorogovtsev2003SpectraOC, Farkas2001}.

Rigorous spectral results for preferential attachment graphs have largely focused on the sparse regime, where each vertex attaches to a fixed number of existing vertices. In this setting, high-degree vertices play a dominant role in determining extreme eigenvalues, and leading eigenvectors often localize around hubs \cite{Flaxman2005HighDV}. Similar results exist for a wide variety of random trees \cite{Bhamidi2009SpectraOL}.

The dense preferential attachment regime, in which the number of edges added at each step grows proportionally with the network size, is qualitatively different from the sparse setting. In the dense regime, adjacency matrices become dense weighted objects, and graph limit theory provides a natural framework for describing their asymptotic structure. Dense graph sequences can converge to graphons, which are measurable symmetric kernels encoding limiting edge densities \cite{LNGL}.

Several works have established convergence of dense preferential attachment models to deterministic limits and have connected these limits to random graph constructions \cite{Kolossvry2009MultigraphLA, Rth2009TimeEO, Rth2011MultigraphLO}. These results describe the limiting mean structure of dense preferential attachment networks. However, deterministic kernel convergence alone does not determine the limiting eigenvalue distribution of centered adjacency matrices, which typically requires random matrix analysis of fluctuations around the mean \cite{Zhu2018AGA}. For such matrices, the limiting spectral distribution is determined by a self-consistent resolvent equation rather than by the semicircle law \cite{Hachem2005DeterministicEF,Khorunzhy1996AsymptoticPO}. The quadratic vector equation (QVE) framework developed by Ajanki, Erd\H{o}s, and Kr\"uger provides a general method for determining limiting spectral densities of Wigner-type matrices with arbitrary variance profiles under suitable moment and irreducibility conditions \cite{AEK2017}.

Additionally, finite-rank perturbation theory shows that adding a low-rank deterministic component to a random matrix can produce isolated eigenvalues separated from the bulk spectrum. Such outliers are well understood in the context of spiked random matrix models and can often be characterized explicitly \cite{BenaychGeorges2009TheEA}.

Motivated by these developments, we study the spectral properties of dense preferential attachment multigraphs in a regime where the number of attachments grows linearly with the network size. We show that the expected adjacency matrix converges to an explicit kernel that generates a rank-one component. After centering and scaling, the fluctuation matrix forms a Wigner-type ensemble with an explicit variance kernel suitable for analysis under the QVE framework. This approach results in a deterministic limiting bulk spectral distribution and an explicit asymptotic description of the leading eigenvalue generated by the rank-one mean structure. Our results complement existing work on sparse preferential attachment spectra and connect dense preferential attachment graph limits with modern random matrix theory.

\subsection{Model and notation}

Let $n$ be the final number of vertices. Fix rational $c \in \left(0, \frac12\right)$, and consider only values of $n$ for which $m := c n$ is an integer. All limits are taken as $n \to \infty$ along this admissible sequence. Time $t$ indexes vertices: vertex $t$ arrives at time $t$, except for the first $m + 1$ vertices.

\noindent\textbf{Initialization:} At time $t_0 := m + 1$, the graph is a simple clique $K_{m + 1}$, a fully connected graph of $m + 1$ vertices. Let $E_t$ be the number of edges at time $t$, and $D_t := 2 E_t$ the total degree.

\noindent\textbf{Preferential attachment:} For each $t > t_0$ such that $t \le n$, we add a new vertex to the graph -- a total of $n - m - 1$ new vertices added to the initial clique. In the limit, the number of new vertices is on the order of $m$ since $n - m - 1 \approx m / c$. Each new vertex $t$ adds $m$ edges independently to the existing vertices $\left\{1, \dots, t - 1\right\}$; the probability that a single edge connects vertex $t$ to a vertex $j < t$ is
\[
q_{t \to j}
\ = \
\frac{d_j \left(t - 1\right)}{\sum_{v = 1}^{t - 1} d_v \left(t - 1\right)}
\ = \
\frac{d_j \left(t - 1\right)}{D_{t - 1}},
\]
where $d_j \left(s\right)$ is the degree of $j$ at time $s$. Multiple edges for the same pair of vertices are allowed.

\noindent\textbf{Total degree:} At initialization, $E_{t_0} \ = \ \binom{m + 1}{2}$, hence
\[
D_{t_0}
\ = \
m \left(m + 1\right).
\]
Each step $t > t_0$ adds exactly $m$ edges, so
\begin{equation}\label{eq:Dt}
D_t
\ = \
D_{t_0} + 2 m \left(t - t_0\right)
\ = \
2 m t - m \left(m + 1\right)
\end{equation}
for all $t \ge t_0$.

\subsection{Main results}

\begin{theorem}[Limiting bulk spectrum]\label{thm:bulk-spectrum}
Let $M$ be the adjacency matrix of the Barab\'asi--Albert multigraph with $m = c n$ and fixed $c \in \left(0, \frac12\right)$, and define the centered and rescaled matrix
\[
Y
\ = \
\frac{M - \E[M]}{\sqrt{m}}.
\]
Then the empirical spectral distribution of $Y$ converges weakly in probability,\footnote{A sequence of probability measures $\mu_n$ on $\R$ converges weakly in probability to a probability measure $\mu$ if, for all bounded continuous functions $f : \R \to \R$ and all $\varepsilon > 0$,
\[
\P \left[
\abs{
\int f \, d \mu_n - \int f \, d \mu
}
>
\varepsilon
\right]
\ \to \
0
\]
as $n \to \infty$.} as $n \to \infty$, to a deterministic probability measure $\mu_c$.

The Stieltjes transform
\[
G_c \left(z\right)
\ = \
\int_{\R} \frac{1}{\lambda - z} \, d \mu_c \left(\lambda\right),
\quad z \in \mathbb{H},
\]
is given by
\[
G_c \left(z\right)
\ = \
c m_A \left(z\right) - \int_c^1 \frac{1}{z + u \left(x\right) K \left(z\right)} \, d x,
\quad
u \left(x\right)
\ = \
\left(2 x - c\right)^{-1 / 2},
\]
where the pair $\left(K \left(z\right), L \left(z\right)\right)$ is the unique solution in $\mathbb{H}^2$ of the system
\begin{equation}\label{eq:QVE-system}
\begin{aligned}
K \left(z\right)
&\ = \
L \left(z\right) - \frac{\sqrt{c}}{z + L \left(z\right) / \sqrt{c}},\\[4pt]
L \left(z\right)
&\ = \
-\int_c^1 \frac{u \left(y\right)}{z + u \left(y\right) K \left(z\right)} \, d y,
\end{aligned}
\quad z \in \mathbb{H},
\end{equation}
and
\[
m_A \left(z\right)
\ = \
-\frac{1}{z + L \left(z\right) / \sqrt{c}}.
\]

The limiting density $\rho_c$ is recovered by Stieltjes inversion:
\[
\rho_c \left(\lambda\right)
\ = \
\frac{1}{\pi} \lim_{\eta \downarrow 0} \Im G_c \left(\lambda + i \eta\right).
\]
\end{theorem}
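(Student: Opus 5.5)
The plan is to identify $Y$ as a Wigner-type matrix with an explicit asymptotic variance profile. We then apply the quadratic vector equation (QVE) theory of \cite{AEK2017}, or the earlier variance-profile results \cite{Hachem2005DeterministicEF,Khorunzhy1996AsymptoticPO}, to an independent surrogate. Finally, we reduce the resulting QVE to the two-scalar system \eqref{eq:QVE-system}. Throughout, index vertex $t$ by $x=t/n\in(0,1]$, so clique vertices correspond to $x\le c$ and added vertices to $x\in(c,1]$.

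\textbf{Step 1: degree concentration and the variance profile.} Conditionally on the past, $\E[d_j(t)\mid \mathcal F_{t-1}] = d_j(t-1)\bigl(1+m/D_{t-1}\bigr)$. By \eqref{eq:Dt}, $D_t/m = n(2x-c)+O(1)$, so $\prod_{s}(1+m/D_{s-1})\approx \sqrt{D_t/D_s}$. A martingale argument (Freedman or Azuma with variance bounds of order $m$ per step) then gives, uniformly in $j,t$ with high probability, $d_j(t)=\bar d_j(t)(1+o(1))$. Here $\bar d_j(t)=m\sqrt{(2x-c)/c}$ for a clique vertex $j$, and $\bar d_j(t)=m\sqrt{(2x-c)/(2y-c)}$ for an added vertex $j=yn$. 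Hence the conditional attachment probability is $q_{t\to j}=\bar d_j(t-1)/D_{t-1}\approx u(x)/(n\sqrt c)$ for $j$ in the clique and $\approx u(x)u(y)/n$ for an added $j$. Since $M_{tj}$ is conditionally $\mathrm{Bin}(m,q_{t\to j})$, the scaled variance $n\Var(Y_{tj})$ converges to the kernel $S(x,y)$ defined as follows: $S(x,y)=u(x)u(y)$ if both $x,y>c$; $S(x,y)=u(x)/\sqrt c$ if exactly one of them (relabelled $x$) exceeds $c$; and $S(x,y)=0$ on the clique block, where entries are deterministic. Note that $u$ is bounded on $[c,1]$ (by $c^{-1/2}$) and bounded below, so $S$ is bounded and piecewise continuous.

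\textbf{Step 2: decoupling (expected main obstacle).} The entries of $M$ are not independent, because each $q_{t\to j}$ depends on the random degrees. I would couple $M$ with a matrix $\tilde M$ whose entries for $t>j$ are independent $\mathrm{Bin}(m,\bar q_{t\to j})$, with deterministic $\bar q$ from Step 1 and symmetric extension. The coupling is done edge by edge, using the maximal coupling of the categorical choice at each step. By Step 1 the total-variation mismatch per edge is $o(1/n)$ summed over targets, so the number of differing edge placements is $o(mn)$ with high probability. Bounding $\|M-\tilde M\|_F^2$ by the sum of squared discrepancies, which requires some care because of multi-edges (so I would use a slightly stronger $\ell^2$ form of the coupling), gives $n^{-1}\|(M-\tilde M)/\sqrt m\|_F^2\to0$. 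The difference of the centerings $\E M-\E\tilde M$ is handled the same way. The Hoffman--Wielandt inequality then shows that the empirical spectral distributions of $Y$ and $\tilde Y=(\tilde M-\E\tilde M)/\sqrt m$ have vanishing L\'evy distance in probability. If the $o(\cdot)$ rates from the martingale bounds are too weak here, I would instead condition on the degree trajectory and use a rank/perturbation bound on a small exceptional set of rows.

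\textbf{Step 3: QVE for the surrogate and reduction.} The matrix $\tilde Y$ has independent centered entries with $n\,\E|\tilde Y_{tj}|^2\to S(t/n,j/n)$ and bounded normalized higher moments. The standard variance-profile theorem (moment or resolvent method, with convergence in probability via concentration of $n^{-1}\Tr(\tilde Y-z)^{-1}$) therefore gives convergence to $\mu_c$, whose Stieltjes transform is $\int_0^1 m(x)\,dx$. Here $m:[0,1]\to\mathbb H$ is the unique solution of $-1/m(x)=z+\int_0^1 S(x,y)m(y)\,dy$, with uniqueness supplied by \cite{AEK2017}. On the clique block the right-hand side does not depend on $x$, so $m\equiv m_A=-1/(z+L/\sqrt c)$ with $L=\int_c^1u(y)m(y)\,dy$. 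For $x>c$ we get $m(x)=-1/(z+u(x)K)$ with $K=\int_c^1 u\,m\,dy+c\cdot m_A/\sqrt c=L+\sqrt c\,m_A$. Substituting gives exactly \eqref{eq:QVE-system} and $G_c=c\,m_A+\int_c^1 m(x)\,dx$. Conversely, any solution $(K,L)\in\mathbb H^2$ of \eqref{eq:QVE-system} produces a solution $m$ of the QVE, so uniqueness transfers. Stieltjes inversion then yields $\rho_c$.
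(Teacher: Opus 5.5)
There is a genuine gap in Step~2. The rest of your architecture matches the paper's: an independent surrogate, a QVE whose kernel is $F/c$ off the clique block and $0$ on it, then the rank-two reduction. Step~3 is correct, and your converse argument (any $(K,L)\in\mathbb H^2$ solving \eqref{eq:QVE-system} rebuilds a solution of the continuum QVE, so uniqueness transfers) covers a point the paper only asserts.

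The problem is that the coupling you describe cannot produce the matrix you claim. Suppose each of the $m$ edges emitted by vertex $t$ is maximally coupled with a categorical draw from $\bar q_{t\to\cdot}$. Then row $t$ of $\tilde M$ consists of exactly $m$ draws, so it is $\mathrm{Multinomial}(m;\bar q_{t\to 1},\dots,\bar q_{t\to t-1})$ with row sum identically $m$. Independent $\mathrm{Bin}(m,\bar q_{t\to j})$ entries would instead have a random row sum with variance about $m$. After your coupling the rows are independent of each other, but the entries within a row are still dependent. Consequently neither \cite{AEK2017} nor the variance-profile results \cite{Hachem2005DeterministicEF,Khorunzhy1996AsymptoticPO} can be invoked in Step~3, since they require independent entries up to symmetry.

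The missing ingredient is the Poissonization idea behind Lemma~\ref{lem:multi-poisson} and Proposition~\ref{prop:deconditioning}:
\begin{itemize}
\item Take $\bar q_{t\to j}:=\E[q_{t\to j}]$. Each row of probabilities then sums to one, and $\E\tilde M=\E M$ exactly.
\item Realize each multinomial row as independent $\mathrm{Poisson}(m\bar q_{t\to j})$ variables conditioned on their sum $S_t$ being $m$.
\item Couple the two by adding or deleting $\abs{S_t-m}$ balls. Since $S_t\sim\mathrm{Poisson}(m)$ gives $\E\abs{S_t-m}\le\sqrt m$, and such balls rarely collide, this perturbs $\tilde M$ by a symmetric $E$ with $\E\norm{E}_F^2/(mn)=O(m^{-1/2})$.
\item Hoffman--Wielandt then finishes.
\end{itemize}
In your ordering this is cleaner than in the paper, because the Poisson means are deterministic and the resulting entries are genuinely independent. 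Your sequential maximal coupling then takes the place of Proposition~\ref{prop:independent-poisson-comparison}. Your Step~1 supplies the degree-concentration input that the paper's comparison needs, namely $\Var(d_j(k-1))=O(n)$.

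One correction to Step~1: the per-step conditional variance of $d_j$ is $O(1)$, because the increment is binomial with mean $m d_j/D=O(1)$; it is not of order $m$. Order-$m$ variance over $\Theta(n)$ steps would give fluctuations of order $n$, as large as $d_j$ itself. Azuma with worst-case increments $m$ is equally useless. Use Freedman or Bernstein with the $O(1)$ conditional variance, after truncating the Poisson-like increments.
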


\begin{theorem}[Leading eigenvalue]\label{thm:top-eigenvalue}
Let $M$ be the adjacency matrix of the Barab\'asi--Albert multigraph with $m = c n$ and fixed rational $c \in \left(0, \frac12\right)$ such that $n$ and $m$ are integers. Then the largest eigenvalue of the scaled adjacency matrix satisfies
\[
\lambda_{\max} \left(\frac{M}{\sqrt{m}}\right)
\ = \
\frac{\alpha \left(c\right)}{\sqrt{c}} \sqrt{n}
+
o_{\P} \left(\sqrt n\right),
\qquad n \to \infty,
\]
with
\[
\alpha \left(c\right)
\ = \
c + \frac{c}{2} \log \frac{2 - c}{c}.
\]
\end{theorem}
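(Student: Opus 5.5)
Write $M = \E[M] + \sqrt{m}\, Y$. By Weyl's inequality, $\abs{\lambda_{\max}(M/\sqrt m) - \lambda_{\max}(\E[M]/\sqrt m)} \le \norm{Y}$. The proof therefore splits into two estimates: first, $\lambda_{\max}(\E[M])/\sqrt{m} = \frac{\alpha(c)}{\sqrt c}\sqrt n + o(\sqrt n)$; second, $\norm{Y} = o_{\P}(\sqrt n)$.

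\textbf{The operator norm of $Y$.} The second estimate is the easy one. I would bound $\norm{Y}$ by the Frobenius norm, $\E\norm{Y}_F^2 = m^{-1}\sum_{i,j}\Var(M_{ij})$. Each entry $M_{tj}$, with $j<t$, is a sum of $m$ indicator variables. The attachment probabilities are random but bounded above by $O(1/m)$: indeed $d_j(t-1)\le d_j(n) = O(n)$ and $D_{t-1}\asymp m n$. The law of total variance then gives $\Var(M_{tj}) = O(1)$. Summing over the $n^2$ entries yields $\E\norm{Y}_F^2 = O(n^2/m) = O(n)$, so $\norm{Y} = O_{\P}(\sqrt n)$. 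This is only borderline, so I would sharpen it. The bulk theorem (Theorem~\ref{thm:bulk-spectrum}) suggests $\norm{Y}=O_{\P}(1)$. A cheaper route is a trace-moment bound $\E\Tr Y^{2k}$ with $k$ fixed but large, or a matrix Bernstein inequality applied to the martingale differences indexed by the arrival time $t$. Either should give $\norm{Y} = O_{\P}(\sqrt{\log n})$ or $O_{\P}(n^{1/2-\delta})$, which is enough.

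\textbf{The mean matrix.} For $\E[M]$ I would use the limiting kernel described in the abstract. The first step is to show $\E[d_j(t)] \approx m\sqrt{t/j}$-type growth. Precisely, from $\E[d_j(t)] = \E[d_j(t-1)](1 + m/D_{t-1})$ and $D_{t-1} = 2m(t-1) - m(m+1)$, one gets $\E[d_j(t)] \approx \E[d_j(j)]\sqrt{(2t-m)/(2j-m)}$. In rescaled variables $x=t/n$, $y=j/n$, this gives, for $j>m+1$,
\[
\E[M_{tj}] \approx \frac{m}{D_{t-1}}\E[d_j(t-1)] \approx \frac{c}{\sqrt{(2x-c)(2y-c)}} = c\,u(x)u(y),
\]
because $\E[d_j(j)]=m$. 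For the clique vertices we have $\E[d_j(m+1)] = m$ as well, and there is an extra clique block of ones, i.e.\ of order $1$.

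So $\E[M]$ is, up to $o(1)$ entrywise (uniformly, with an error that is summable in operator norm), the kernel
\[
W(x,y) = c\,u(x)u(y)\ \ \text{(off the clique block)}, \qquad u(x) = (2x-c)^{-1/2},
\]
where $u(x)$ should be understood as $u(\max(x,c))$ on $[0,c]$, together with the clique block equal to $1$ on $[0,c]^2$. Since $u(c)=c^{-1/2}$, the clique block equals $c\,u(c)^2$, so the whole kernel is exactly rank one: $W = c\,\tilde u\otimes\tilde u$ with $\tilde u = u\circ\max(\cdot,c)$. Its nonzero eigenvalue is
\[
c\int_0^1 \tilde u^2 = c\left(c\cdot\tfrac1c + \int_c^1\frac{dx}{2x-c}\right) = c + \tfrac c2\log\tfrac{2-c}{c} = \alpha(c).
\]
It follows that $\lambda_{\max}(\E[M]) = n\alpha(c) + o(n)$, and dividing by $\sqrt m = \sqrt{cn}$ gives the claimed leading term.

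\textbf{Main obstacle.} The hardest step is justifying this approximation in operator norm with error $o(n)$. This requires controlling $\E[d_j(t-1)/D_{t-1}]$, which is clean here since $D_{t-1}$ is deterministic, and bounding the discretization error of the recursion uniformly in $j,t$. I would establish a uniform relative error $O(1/n)$ for the product $\prod(1+m/D_s)$ against the square-root formula. Combined with Hilbert--Schmidt convergence of the step-kernel to $W$, this gives $\norm{\E[M]/n - W_n} \to 0$, and the eigenvalue continuity then finishes the argument.
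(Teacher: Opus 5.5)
Your architecture matches the paper's. Weyl's inequality (Proposition~\ref{prop:weyl}) reduces the claim to $\lambda_1(\E[M]) = \alpha(c)n + o(n)$ and $\norm{M-\E[M]} = o_{\P}(n)$. Your treatment of the mean (the kernel $c\,\tilde u\otimes\tilde u = \phi\otimes\phi$, a Riemann sum for $\norm{v^{(n)}}_2^2$, then Weyl) is essentially Lemma~\ref{lem:top-eig-EM}.

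The gap is the second estimate, which is the real content of the theorem and which you only assert. Your Frobenius computation gives $\norm{Y} = O_{\P}(\sqrt n)$, i.e.\ $\norm{M-\E[M]} = O_{\P}(n)$. That is not merely borderline: it is exactly the order of the signal $\alpha(c)n$, so it gives no information. The bulk theorem cannot upgrade it, because weak convergence of the empirical spectral distribution is blind to $o(n)$ outlying eigenvalues. Neither tool you name applies off the shelf either. Trace-moment bounds and matrix Bernstein are stated for independent entries or summands, whereas here $M_{tj}$ depends on $d_j(t-1)$, hence on all earlier rows. So the phrase ``either should give'' sits exactly where the proof is missing. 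For the same reason your ``main obstacle'' is misplaced: the mean approximation is routine (the paper gets an $O(1)$ error from Gamma asymptotics), while the dependent fluctuations are not.

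The paper closes this step by comparing with an independent model. Poissonization (Proposition~\ref{prop:deconditioning}) and replacing the random intensities $\lambda_{kj}$ by their means (Proposition~\ref{prop:independent-poisson-comparison}) couple $Y$ to a matrix $H$ with independent centered Poisson entries and $\norm{Y-H}_F = o_{\P}(\sqrt n)$. The AEK no-outlier result (Lemma~\ref{lem:AEK-no-outliers}) gives $\norm{H} = O_{\P}(1)$. Hence $\norm{Y} \le \norm{H} + \norm{Y-H}_F = o_{\P}(\sqrt n)$.

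Your martingale route can also be made to work, but it needs two real steps:
\begin{enumerate}
\item Split $M-\E[M] = (M-\Lambda) + (\Lambda-\E[M])$ with $\Lambda_{tj} = m\,d_j(t-1)/D_{t-1}$ (symmetrized). Control the predictable part in Frobenius norm using a second-moment bound $\Var(d_j(t)) = o(n^2)$.
\item Treat $M-\Lambda = \sum_t Z_t$ as a matrix martingale with star-shaped increments. Since $\norm{Z_t}$ can be of order $m$ almost surely, the $R\log n$ term in matrix Freedman is of order $n\log n$ unless you truncate at $\norm{Z_t} = O(\sqrt m)$. You must also bound the predictable quadratic variation by $O(n)$ with high probability, which requires $\max_j d_j(n) = O(n)$.
\end{enumerate}

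A cheaper alternative uses the fact that $M \ge 0$ entrywise. With $v_k = \phi(k/n) > 0$, one has $v^\top Mv/\norm{v}_2^2 \le \lambda_{\max}(M) \le \max_k (Mv)_k/v_k$ (Collatz--Wielandt). It then suffices that the weighted degrees satisfy $(Mv)_k = \phi(k/n)\alpha(c)n + o(n)$ uniformly in $k$ with probability tending to one. That is a scalar statement within reach of Freedman's inequality and degree concentration.
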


See Figure 1 for an empirical verification of the bulk law obtained from numerically inverting the Stieltjes transform from Theorem \ref{thm:bulk-spectrum} and the outlier eigenvalue asymptotics from Theorem \ref{thm:top-eigenvalue}.

\begin{figure}[H]
    \centering
    \includegraphics[width=1\linewidth]{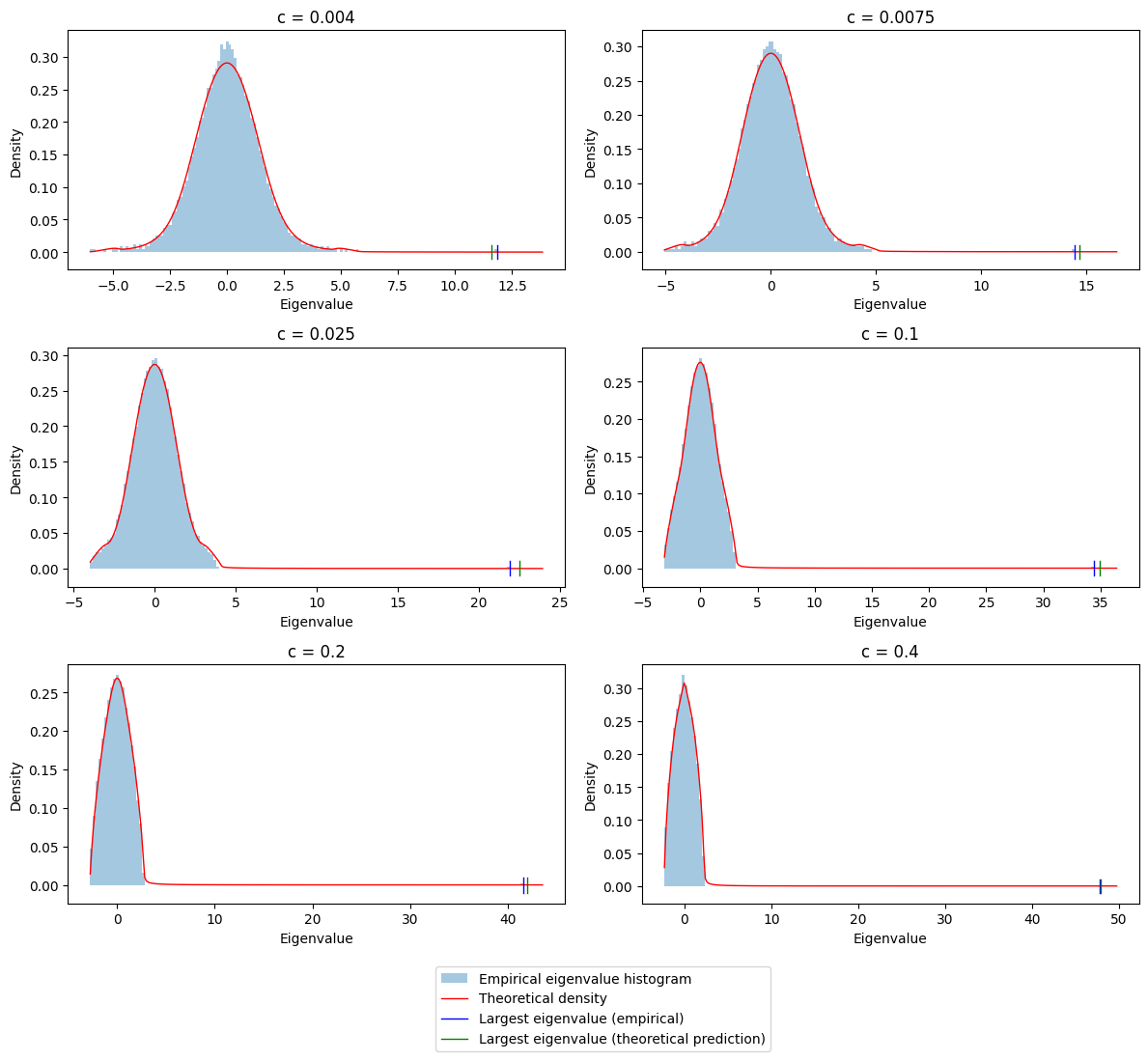}
    \caption{Spectrum of random B--A multigraphs. As $c$ grows, the spectrum concentrates around the origin, approaching the spectrum of the fully connected graph. The shapes of the densities match previous results \cite[Figure 1]{Farkas2001}, which report triangle-like central parts and decaying tails.}
\end{figure}

\section{Finding the expected adjacency matrix.}

\subsection{Expected degrees of individual vertices.}

Fix a vertex $j \le t$. Define the random variable $\Delta d_j \left(t\right)$ to be the number of new edges from $t$ to $j$ at time $t$. Then,
\[
\Delta d_j \left(t\right)
\ \sim \
\mathrm{Binomial} \left(m, q_{t \to j}\right),
\quad
\E[\Delta d_j \left(t\right)]
\ = \
m q_{t \to j}
\ = \
m \frac{d_j \left(t - 1\right)}{D_{t - 1}}.
\]
Using~\eqref{eq:Dt}, this gives a linear recursion for $\mu_j \left(t\right) := \E[d_j \left(t\right)]$. For $t > j$,
\begin{equation}\label{eq:mu-rec}
\mu_j \left(t\right)
\ = \
\mu_j \left(t - 1\right) \left(1 + \frac{m}{2 m \left(t - 1\right) - m \left(m + 1\right)}\right)
\ = \
\mu_j \left(t - 1\right) \left(1 + \frac{1}{2 \left(t - 1\right) - \left(m + 1\right)}\right).
\end{equation}
At the time of arrival, each vertex has degree $m$, so $\mu_j \left(j\right) = m$.

Thus, with $s_0 := \max\left\{j, t_0\right\}$,
\begin{equation}\label{eq:mu-product}
\mu_j \left(t\right)
\ = \
m \prod_{s = s_0 + 1}^{t} \left(1 + \frac{1}{2 \left(s - 1\right) - \left(m + 1\right)}\right)
\ = \
m \prod_{s = s_0 + 1}^{t} \frac{s - \frac{m + 2}{2}}{s - \frac{m + 3}{2}}.
\end{equation}

This product can be represented using Gamma-functions,
\begin{equation}\label{eq:mu-gamma}
\mu_j \left(t\right)
\ = \
m
\frac{\Gamma \left(t - \frac{m}{2}\right)}{\Gamma \left(s_0 - \frac{m}{2}\right)}
\frac{\Gamma \left(s_0 - \frac{m + 1}{2}\right)}{\Gamma \left(t - \frac{m + 1}{2}\right)},
\qquad t \ge s_0,
\end{equation}
which follows from Lemma \ref{lem:prod-gamma}.

\begin{lemma}\label{lem:prod-gamma}
Let $r_0 < r_1$ be integers and $a, b \in \C$ such that the Gamma terms below are defined. Then
\[
\prod_{r = r_0 + 1}^{r_1} \frac{r - a}{r - b}
\ = \
\frac{\Gamma \left(r_1 + 1 - a\right)}{\Gamma \left(r_0 + 1 - a\right)}
\frac{\Gamma \left(r_0 + 1 - b\right)}{\Gamma \left(r_1 + 1 - b\right)}.
\]
\end{lemma}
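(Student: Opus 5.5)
The plan is to telescope the product using the functional equation $\Gamma(x+1) = x\,\Gamma(x)$, applied separately to the numerator and the denominator. The hypothesis that the Gamma terms are defined means $r_0 + 1 - a$ and $r_0 + 1 - b$ are not nonpositive integers. Since $r_0 < r_1$ are integers, the arguments $r_1 + 1 - a$ and $r_1 + 1 - b$ differ from these by positive integers, so they are not poles either. Consequently every intermediate value $\Gamma(r + 1 - a)$, $\Gamma(r + 1 - b)$ for $r_0 \le r \le r_1$ is finite and nonzero, because $\Gamma$ has no zeros.

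First, I would prove by induction on $r_1$, starting from $r_1 = r_0 + 1$, that
\[
\prod_{r = r_0 + 1}^{r_1} \left(r - a\right) \ = \ \frac{\Gamma\left(r_1 + 1 - a\right)}{\Gamma\left(r_0 + 1 - a\right)}.
\]
The base case is the identity $\Gamma(r_0 + 2 - a) = (r_0 + 1 - a)\,\Gamma(r_0 + 1 - a)$. For the inductive step, multiplying by the next factor $(r_1 + 1 - a)$ gives
\[
(r_1 + 1 - a)\,\Gamma(r_1 + 1 - a) \ = \ \Gamma(r_1 + 2 - a),
\]
which is the formula with $r_1$ replaced by $r_1 + 1$. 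The same argument with $b$ in place of $a$ handles the denominator.

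Next, I would check that no denominator factor $r - b$ vanishes. This holds because $\Gamma(r + 1 - b)$ is finite for every $r$ in the range, so $r - b = \Gamma(r + 1 - b)/\Gamma(r - b)$ is a ratio of finite nonzero quantities. Dividing the numerator identity by the denominator identity then gives exactly the stated formula.

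There is no real obstacle here. The only point needing care is the bookkeeping of the domain condition, namely confirming that all the Gamma values involved are finite and nonzero so that the division is legitimate. In the application to \eqref{eq:mu-gamma}, one takes $a = \frac{m+2}{2}$, $b = \frac{m+3}{2}$, $r_0 = s_0$, and $r_1 = t$. There the arguments are positive because $s_0 \ge m + 1 > \frac{m+1}{2}$, so the hypothesis is satisfied.
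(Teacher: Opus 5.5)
Your proof is correct and follows the paper's argument: you telescope $\Gamma(x+1)=x\,\Gamma(x)$ separately for $a$ and $b$ and take the ratio, and your induction makes the paper's chain of equalities rigorous. Your extra bookkeeping is a sound refinement that the paper leaves implicit: you check that all intermediate Gamma values are finite and nonzero, that no factor $r-b$ vanishes, and that the hypothesis holds in the application to \eqref{eq:mu-gamma}.
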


\begin{proof}
Using the standard result $\Gamma \left(z + 1\right) = z \Gamma \left(z\right)$,
\begin{align*}
\Gamma \left(r_1 + 1 - a\right)
&\ = \
\left(r_1 - a\right) \Gamma \left(r_1 - a\right) \\
&\ = \
\left(r_1 - a\right) \left(r_1 - 1 - a\right) \Gamma \left(r_1 - 1 - a\right) \\
&\ \ \, \vdots \\
&\ = \
\left(\prod_{r = r_0 + 1}^{r_1} \left(r - a\right)\right) \Gamma \left(r_0 + 1 - a\right).
\end{align*}

Hence
\[
\prod_{r = r_0 + 1}^{r_1} \left(r - a\right)
\ = \
\frac{\Gamma \left(r_1 + 1 - a\right)}{\Gamma \left(r_0 + 1 - a\right)}.
\]
The same argument applies to $b$ and taking their ratio proves the lemma.
\end{proof}

\subsection{Expectation of the edge count $\E[M]$}

For $k > j$, all edges between $k$ and $j$ are created at time $k$. Conditional on the state of the graph at time $k - 1$, $\mathcal{F}_{k - 1}$, we have
\[
M_{k j} \,\big|\, \mathcal{F}_{k - 1}
\ \sim \
\mathrm{Binomial} \left(m, \frac{d_j \left(k - 1\right)}{D_{k - 1}}\right),
\]
since in the multigraph model, the $m$ edges from $k$ are placed independently, each choosing $j$ with probability $d_j \left(k - 1\right) / D_{k - 1}$, making $M_{k j}$ a binomial random variable.

Hence, using \eqref{eq:Dt} and \eqref{eq:mu-gamma},
\begin{equation}\label{eq:E-Mij-discrete}
\E[M_{k j}]
\ = \
m \frac{\mu_j \left(k - 1\right)}{D_{k - 1}}
\ = \
\frac{m^2}{2 m \left(k - 1\right) - m \left(m + 1\right)}
\frac{\Gamma \left(k - 1 - \frac{m}{2}\right)}{\Gamma \left(s_0 - \frac{m}{2}\right)}
\frac{\Gamma \left(s_0 - \frac{m + 1}{2}\right)}{\Gamma \left(k - 1 - \frac{m + 1}{2}\right)},
\end{equation}
where $s_0 := \max\left\{j, t_0\right\}$ and $t_0 = m + 1$ is the initialization time.

For $1 \le k, j \le t_0$, the initial clique $K_{m + 1}$ yields $\E[M_{k j}] = \mathbf{1}_{k \neq j}$ and no later edges affect those pairs.

\section{Kernel Approach to $\E[M]$}

\subsection{Definition of a kernel}

Following Lov\'asz \cite{LNGL}, who introduced the concept to spectral graph theory, we define a $\textbf{kernel}$ to be any bounded, symmetric, measurable function
\[
W : \left[0, 1\right]^2
\ \to \
\R,
\]
which we can use to generalize adjacency matrices of graphs.

Given a finite graph $H$ with adjacency matrix $A = \left(A_{k j}\right)$, the associated step-function kernel $W_H$ is defined by partitioning $\left[0, 1\right]$ into $\abs{V \left(H\right)}$ equal intervals $I_1, \dots, I_{\abs{V \left(H\right)}}$ and setting $W_H \left(x, y\right) = A_{k j}$ whenever $x \in I_k,\ y \in I_j$.

A graphon is a kernel whose values lie in the interval $\left[0, 1\right]$, corresponding to simple graphs \cite{LNGL}. In general, kernels serve as the limit objects for sequences of weighted or multigraph adjacency matrices.

\subsection{The kernel of $\E[M]$}

Now we calculate the kernel for the expected adjacency matrix of dense B--A multigraphs.

\begin{theorem}\label{thm:mean-kernel-limit}
Let $c \in \left(0, \frac12\right)$ and $m = c n$. For indices $k > j$ with $k / n \to y \in \left(0, 1\right]$ and $j / n \to x \in \left(0, 1\right]$, set $t_0 = m + 1$ and $s_0 = \max\left\{j, t_0\right\}$. Then
\[
\E[M_{k j}]
\ = \
m \frac{\mu_j \left(k - 1\right)}{D_{k - 1}}
\ \to \
\frac{c}{\sqrt{\left(2 \max\left\{x, c\right\} - c\right) \left(2 y - c\right)}},
\qquad n \to \infty,
\]
where $D_t = 2 m t - m \left(m + 1\right)$ and $\mu_j \left(\cdot\right) = \E[d_j \left(\cdot\right)]$. Moreover, $\E[M_{k j}] = O(1)$ uniformly in $j,k,n$.
\end{theorem}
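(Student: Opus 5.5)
We start from the closed form \eqref{eq:E-Mij-discrete}, which is the Gamma-function representation \eqref{eq:mu-gamma} of $\mu_j(k-1)$ divided by $D_{k-1}$ from \eqref{eq:Dt}. The argument has two parts. First we handle the prefactor. Then we apply the Gamma-ratio asymptotic $\Gamma(w+\tfrac12)/\Gamma(w) = \sqrt{w}\,(1+O(1/w))$ as $w\to\infty$, which follows from Stirling's formula or Wendel's inequality, to the two Gamma ratios.

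For the prefactor, note that $m^2/(2m(k-1)-m(m+1)) = m/(2(k-1)-m-1)$. With $m=cn$ and $k/n\to y$, this equals
\[
\frac{cn}{n(2y-c)+o(n)} \;\to\; \frac{c}{2y-c}.
\]
Here $2y-c>0$, because $k>j\ge 1$ and $k>t_0$ whenever the edge is created by attachment. So $k\ge m+2$, giving $y\ge c$ and $2y-c\ge c>0$.

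For the Gamma part, we group
\[
\frac{\Gamma(k-1-\frac m2)}{\Gamma(k-1-\frac{m+1}{2})}\cdot\frac{\Gamma(s_0-\frac{m+1}{2})}{\Gamma(s_0-\frac m2)}.
\]
The first factor has the form $\Gamma(w+\frac12)/\Gamma(w)$ with $w=k-1-\frac{m+1}{2}=n(y-\frac c2)+O(1)$. The second factor is the reciprocal of the same form with $w'=s_0-\frac{m+1}{2}$. Since $s_0/n\to\max\{x,c\}$ (because $t_0/n\to c$), we get $w'=n(\max\{x,c\}-\frac c2)+o(n)$. Both $w$ and $w'$ are at least $\frac{m+1}{2}-1$, so they tend to infinity linearly in $n$. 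The product of the two factors is therefore
\[
\sqrt{w/w'}\,\bigl(1+O(1/n)\bigr)\;\to\;\sqrt{\frac{2y-c}{2\max\{x,c\}-c}}.
\]
Multiplying by the prefactor limit $c/(2y-c)$ gives the stated limit $c/\sqrt{(2\max\{x,c\}-c)(2y-c)}$.

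For the uniform $O(1)$ bound, we use the two-sided Wendel bound: for $w>0$,
\[
\sqrt{w}\,\Bigl(\tfrac{w}{w+\frac12}\Bigr)^{1/2}\;\le\;\frac{\Gamma(w+\frac12)}{\Gamma(w)}\;\le\;\sqrt{w}.
\]
Since $w'\ge s_0-t_0+\frac{m+1}{2}\ge \frac{m+1}{2}$, this gives $\mathbb{E}[M_{kj}]\le C\,\frac{m}{2k-m-3}\sqrt{w/w'}$ with an absolute constant $C$. Next, $w\le k$ and $w'\ge m/2$, so $\sqrt{w/w'}\le\sqrt{2k/m}$. The bound becomes
\[
\mathbb{E}[M_{kj}]\;\le\; C\,\frac{m\sqrt{2k/m}}{k}\;=\;C'\sqrt{m/k}\;\le\; C',
\]
using $2k-m-3\ge k$ (true since $k\ge m+3$) and $k\ge m$. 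The finitely many boundary cases are handled directly: $k=m+2$ can be checked from the formula, and for clique pairs $\mathbb{E}[M_{kj}]=\mathbf{1}_{k\ne j}\le 1$. This yields $O(1)$ uniformly in $j,k,n$.

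The main obstacle is making the Gamma-ratio asymptotics uniform, in particular near the boundaries $x\le c$, where $s_0=t_0$, and $y$ close to $c$. Both are handled because $w,w'\gtrsim m=cn$ throughout, so the $O(1/w)$ errors are $O(1/n)$ uniformly. We also need $s_0/n\to\max\{x,c\}$, which follows from $\max$ being continuous.
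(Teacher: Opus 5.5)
Your proof is correct. For the limit you follow essentially the paper's route: the same Gamma-ratio asymptotics with $w=z_1=k-1-\frac{m+1}{2}$ and $w'=z_0=s_0-\frac{m+1}{2}$. Your prefactor $m/(2k-m-3)$ is exactly $m/(2z_1)$, i.e.\ the paper's identity $D_{k-1}=2mz_1$. Two small points in your favour here. Your $o(n)$ bookkeeping fits the hypothesis $k/n\to y$ more faithfully than the paper's choice $k=\lfloor yn\rfloor$. You also make explicit that $k>t_0$ forces $y\ge c$, so $2y-c>0$.

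Where you genuinely differ is the uniform $O(1)$ bound. The paper does not use the Gamma representation there. It iterates the one-step recursion $\mu_j(t)=(1+m/D_{t-1})\mu_j(t-1)$ with the crude estimate $m/D_{t-1}\le 1/(m+1)$ over at most $n$ steps. This gives $\mu_j(k-1)\le (1+\frac{1}{m+1})^{n}m$, and dividing by $D_{k-1}\ge m(m+1)$ yields $\E[M_{kj}]\le e^{1/c}$.

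You instead apply Wendel's two-sided inequality to the closed form and get $\E[M_{kj}]\le C'\sqrt{m/k}$ with an absolute constant. The comparison is as follows.
\begin{itemize}
\item The paper's argument is more elementary: it needs no special-function inequality and would work even without a closed form.
\item Its constant degrades exponentially as $c\to0$.
\item Your bound is independent of $c$ and consistent with the limiting kernel being at most $c/\sqrt{c\cdot c}=1$.
\end{itemize}

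Your separate treatment of $k=m+2$ is fine but unnecessary. For every $k\ge m+2$ one has $2k-m-3\ge k-1\ge k/2$, so the same estimate covers that case directly. The clique pairs and diagonal entries are handled correctly.
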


\begin{proof}
We first prove the uniform bound. Set
\[
s_0
\ := \
\max\left\{j,t_0\right\}.
\]
From the degree evolution, for $t>s_0$,
\[
\E_{\Delta d_j(t)}\left[d_j \left(t\right) \mid \mathcal F_{t - 1}\right]
\ = \
\left(1+\frac{m}{D_{t - 1}}\right)d_j \left(t - 1\right).
\]
Writing
\[
\mu_j \left(t\right)
\ := \
\E_{\mathcal F_t}\left[d_j \left(t\right)\right],
\]
we get
\begin{align*}
\mu_j \left(t\right)
&\ = \
\E_{\mathcal F_t}\left[d_j \left(t\right)\right] \\
&\ = \
\E_{\mathcal F_{t - 1}}\left[
\E_{\Delta d_j(t)}\left[d_j \left(t\right) \mid \mathcal F_{t - 1}\right]
\right] \\
&\ = \
\E_{\mathcal F_{t - 1}}\left[
\left(1+\frac{m}{D_{t - 1}}\right)d_j \left(t - 1\right)
\right] \\
&\ = \
\left(1+\frac{m}{D_{t - 1}}\right)
\E_{\mathcal F_{t - 1}}\left[d_j \left(t - 1\right)\right] \\
&\ = \
\left(1+\frac{m}{D_{t - 1}}\right)\mu_j \left(t - 1\right).
\end{align*}
Since
\[
D_{t - 1}
\ \ge \
D_{t_0}
\ = \
m \left(m + 1\right),
\]
we have
\[
\frac{m}{D_{t - 1}}
\ \le \
\frac{1}{m + 1}
\ \le \
\frac{C_1}{n}
\]
for some constant $C_1 > 0$. Since $\mu_j \left(s_0\right)=m$, iterating gives
\begin{align*}
\mu_j \left(k - 1\right)
&\ \le \
\left(1+\frac{C_1}{n}\right)\mu_j \left(k - 2\right) \\
&\ \le \
\left(1+\frac{C_1}{n}\right)^2\mu_j \left(k - 3\right) \\
&\ \le \
\cdots \\
&\ \le \
\left(1+\frac{C_1}{n}\right)^{k - 1 - s_0}\mu_j \left(s_0\right) \\
&\ \le \
\left(1+\frac{C_1}{n}\right)^n m \\
&\ \le \
C_2 n
\end{align*}
for some constant $C_2 > 0$. Therefore
\[
\E[M_{k j}]
\ = \
m \frac{\mu_j \left(k - 1\right)}{D_{k - 1}}
\ \le \
m \frac{C_2 n}{m \left(m+1\right)}
\ \le \
C,
\]
which proves the uniform bound.

From the Gamma-function representation \eqref{eq:mu-gamma},
\[
\frac{\mu_j \left(k - 1\right)}{n}
\ = \
\frac{m}{n}
\cdot
\frac{\Gamma \left(k - 1 - \frac{m}{2}\right)}{\Gamma \left(k - 1 - \frac{m + 1}{2}\right)}
\cdot
\frac{\Gamma \left(s_0 - \frac{m + 1}{2}\right)}{\Gamma \left(s_0 - \frac{m}{2}\right)}.
\]
Set $z_0 := s_0 - \frac{m + 1}{2}$ and $z_1 := k - 1 - \frac{m + 1}{2}$.

For fixed $a, b \in \C$, we know from \cite{DLMF_Gamma} that
\[
\frac{\Gamma \left(z + a\right)}{\Gamma \left(z + b\right)}
\ \to \
z^{a - b} \left(1 + O \left(\frac{1}{z}\right)\right),
\qquad z \to \infty.
\]

Therefore we obtain
\[
\frac{\Gamma \left(z_1 + \frac{1}{2}\right)}{\Gamma \left(z_1\right)}
\ \to \
z_1^{1 / 2} \left(1 + O \left(z_1^{-1}\right)\right),
\qquad
\frac{\Gamma \left(z_0\right)}{\Gamma \left(z_0 + \frac{1}{2}\right)}
\ \to \
z_0^{-1 / 2} \left(1 + O \left(z_0^{-1}\right)\right).
\]

With $m = c n$ and $k = \lfloor y n \rfloor$, we have $k - y n \in \left(-1, 0\right]$, hence
\[
z_1
\ = \
k - 1 - \frac{m + 1}{2}
\ = \
n \left(y - \frac{c}{2}\right) + O \left(1\right).
\]
Similarly, with $s_0 = \max\left\{j, t_0\right\}$, where $j = \lfloor x n \rfloor$ and $t_0 = m + 1 = c n + 1$, we have $j - x n \in \left(-1, 0\right]$, $s_0 / n \to \max\left\{x, c\right\}$, and
\[
z_0
\ = \
s_0 - \frac{m + 1}{2}
\ = \
n \left(\max\left\{x, c\right\} - \frac{c}{2}\right) + O \left(1\right).
\]

Next, from $D_t = 2 m t - m \left(m + 1\right)$,
\[
D_{k - 1}
\ = \
2 m \left(k - 1\right) - m \left(m + 1\right)
\ = \
2 m \left(z_1 + \frac{m + 1}{2}\right) - m \left(m + 1\right)
\ = \
2 m z_1.
\]
Hence
\[
\frac{D_{k - 1}}{n^2}
\ = \
\frac{2 m z_1}{n^2}
\ = \
2 \frac{m}{n} \frac{z_1}{n}
\ = \
c \left(2 y - c\right) + O \left(n^{-1}\right).
\]

Finally, we derive
\begin{align*}
\E[M_{k j}]
&\ = \
m \frac{\mu_j \left(k - 1\right)}{D_{k - 1}}
\ = \
\frac{m}{n} \frac{\mu_j \left(k - 1\right)}{n} \frac{n^2}{D_{k - 1}} \\
&\ = \
c \left(c \sqrt{\frac{z_1}{z_0}} + O \left(n^{-1}\right)\right)
\left(\frac{1}{c \left(2 y - c\right)} + O \left(n^{-1}\right)\right) \\
&\ = \
c \frac{c \sqrt{z_1 / z_0}}{c \left(2 y - c\right)} + O \left(n^{-1}\right)
\ = \
c \frac{\sqrt{z_1 / z_0}}{2 y - c} + O \left(n^{-1}\right) \\
&\ = \
c \frac{1}{2 y - c}
\left(
\sqrt{\frac{n \left(y - \frac{c}{2}\right) + O \left(1\right)}{n \left(\max\left\{x, c\right\} - \frac{c}{2}\right) + O \left(1\right)}}
+ O \left(n^{-1}\right)
\right)
+ O \left(n^{-1}\right) \\
&\ = \
c \frac{1}{2 y - c}
\left(
\sqrt{\frac{y - \frac{c}{2} + O \left(n^{-1}\right)}{\max\left\{x, c\right\} - \frac{c}{2} + O \left(n^{-1}\right)}}
+ O \left(n^{-1}\right)
\right)
+ O \left(n^{-1}\right) \\
&\ = \
\frac{c}{\sqrt{\left(2 \max\left\{x, c\right\} - c\right) \left(2 y - c\right)}} + O \left(n^{-1}\right) \\
&\ \to \
\frac{c}{\sqrt{\left(2 \max\left\{x, c\right\} - c\right) \left(2 y - c\right)}},
\qquad n \to \infty.
\end{align*}

Solving for different cases results in $F : \left[0, 1\right]^2 \to \left[0, \infty\right)$, which corresponds to the expected edge count for a given vertex pair in the limit:
\begin{equation}\label{eq:FWR}
F \left(x, y\right)
:=
\lim_{n \to \infty} \E[M_{\lfloor y n \rfloor, \lfloor x n \rfloor}]
\ = \
\begin{cases}
1, & x, y \in A,\ x \neq y,\\
\displaystyle \sqrt{\frac{c}{2 y - c}}, & x \in A,\ y \in B,\ x \neq y,\\
\displaystyle \sqrt{\frac{c}{2 x - c}}, & x \in B,\ y \in A,\ x \neq y,\\
\displaystyle \frac{c}{\sqrt{\left(2 x - c\right) \left(2 y - c\right)}}, & x, y \in B,\ x \neq y,\\
0, & x = y;
\end{cases}
\end{equation}
see Figure 2.
\end{proof}

\begin{figure}[H]
    \centering
    \includegraphics[width=0.75\linewidth]{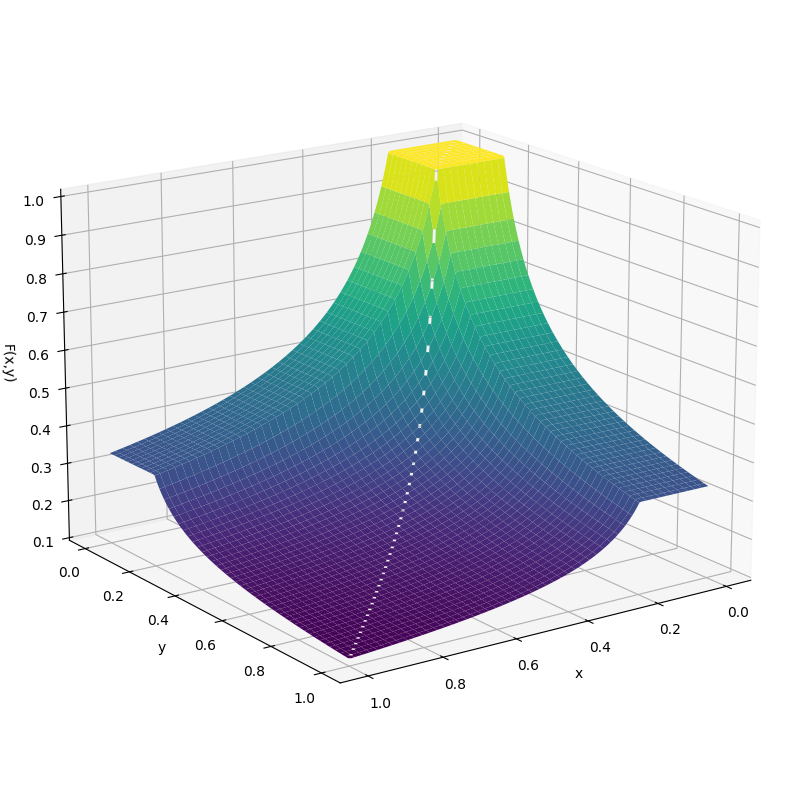}
    \caption{Plot of $F \left(x, y\right)$.}
\end{figure}

\section{Bulk density}

\subsection{Centering and isolation of the bulk}

\paragraph{Definition.}\label{def:centering}
Let $M$ be the $\left(\text{symmetric}\right)$ adjacency matrix of the B--A multigraph and set
\[
Y
\ := \
\frac{M - \E[M]}{\sqrt{m}},
\qquad
m
\ = \
c n,
\qquad
0 < c < \frac12.
\]

We use $Y$ to focus on the bulk of the spectrum.

\begin{lemma}[Rank--one structure of $\E {[}M{]}$]\label{lem:EM-rank1}
Let $A = \left[0, c\right]$, $B = \left(c, 1\right]$, and $u \left(t\right) = \left(2 t - c\right)^{-1 / 2}$. Define the function $\phi : \left[0, 1\right] \to \R$ by
\[
\phi \left(x\right)
\ := \
\mathbf{1}_{\left\{x \in A\right\}} + \sqrt{c} \, \mathbf{1}_{\left\{x \in B\right\}} u \left(x\right),
\]
where $\mathbf{1}_{x \in S}$ is the indicator function defined as
\[
\mathbf{1}_{x \in S}
\ = \
\begin{cases}
1, & x \in S, \\
0, & x \notin S,
\end{cases}
\]
for some set $S$. Then the limiting mean kernel $F$ from \eqref{eq:FWR} satisfies
\[
F \left(x, y\right)
\ = \
\phi \left(x\right) \phi \left(y\right).
\]

Consequently, if one forms the $n \times n$ matrix
\[
F^{\left(n\right)}_{k j}
\ := \
F \left(\frac{k}{n}, \frac{j}{n}\right),
\qquad
1 \le k, j \le n,
\]
then $F^{\left(n\right)}$ is rank one, since
\[
F^{\left(n\right)}
\ = \
v^{\left(n\right)} \left(v^{\left(n\right)}\right)^\top\!,
\qquad
v^{\left(n\right)}_k
\ := \
\phi \left(\frac{k}{n}\right).
\]
In particular, by Theorem \ref{thm:mean-kernel-limit}, the expectation matrix $\E[M]$ is asymptotically a rank--one matrix.
\end{lemma}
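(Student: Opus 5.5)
The plan is to verify the factorization case by case against \eqref{eq:FWR}, and then read off the rank-one structure of the discretized matrix.

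\emph{Case $x, y \in A$.} Here $\phi(x) = \phi(y) = 1$, so the product is $1$. This agrees with $F$ off the diagonal.

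\emph{Mixed case $x \in A$, $y \in B$.} We get
\[
\phi(x)\phi(y) = \sqrt{c}\,(2y-c)^{-1/2} = \sqrt{\frac{c}{2y-c}},
\]
and the case $x \in B$, $y \in A$ follows by symmetry.

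\emph{Case $x, y \in B$.} We get
\[
\phi(x)\phi(y) = c\,\bigl((2x-c)(2y-c)\bigr)^{-1/2},
\]
which again matches the corresponding line of \eqref{eq:FWR}.

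Alternatively, all three cases come at once from the unified formula in Theorem~\ref{thm:mean-kernel-limit}. Write
\[
\frac{c}{\sqrt{(2\max\{x,c\}-c)(2\max\{y,c\}-c)}}
\]
and note that $\sqrt{c}\,(2\max\{t,c\}-c)^{-1/2}$ equals $1$ on $A$ and $\sqrt{c}\,u(t)$ on $B$. That is exactly $\phi(t)$, so the kernel visibly factors.

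The one point needing care is the diagonal. \eqref{eq:FWR} sets $F(x,x)=0$, whereas $\phi(x)^2 > 0$. So the identity $F = \phi\otimes\phi$ holds everywhere except on the diagonal $\{x=y\}$. That set has Lebesgue measure zero in $[0,1]^2$, so the identity holds as kernels, i.e.\ almost everywhere. For the matrix statement I would therefore read $F^{(n)}$ as defined by $\phi(k/n)\phi(j/n)$ for all $k,j$, diagonal included. Then $F^{(n)} = v^{(n)}(v^{(n)})^\top$ is immediate entrywise, and it has rank one because $v^{(n)} \neq 0$. If instead $F^{(n)}$ keeps zeros on the diagonal, the true matrix differs from $v^{(n)}(v^{(n)})^\top$ by a diagonal matrix. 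Its entries are bounded by $\sup\phi^2 = \max\{1, c/(2c-c)\} = 1$ (note $u$ is decreasing on $B$ and $\sqrt{c}\,u \le 1$ there). Its operator norm is therefore $O(1)$, which is negligible at the relevant scale.

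For the final sentence, invoke Theorem~\ref{thm:mean-kernel-limit}: $\E[M_{kj}] = F(j/n,k/n) + O(n^{-1})$ uniformly away from the boundary strips, with a uniform $O(1)$ bound everywhere. Hence $\E[M] - v^{(n)}(v^{(n)})^\top$ has entries that are $o(1)$ except on a vanishing fraction of index pairs, where they are $O(1)$. Consequently the Hilbert--Schmidt norm of the difference is $o(n)$, while $\|v^{(n)}(v^{(n)})^\top\| = \|v^{(n)}\|^2 \asymp n$. This is the precise sense of ``asymptotically rank one.''

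The main obstacle I expect is uniformity of the $O(n^{-1})$ error near $x \approx c$ and near $x = y$, together with the index conventions ($k$ versus $k-1$, floors). I would handle these by restricting to index pairs with $|k-j|$ and $|j - m|$ at least $\varepsilon n$, using the uniform $O(1)$ bound on the remaining $O(\varepsilon n^2)$ entries, and then letting $\varepsilon \to 0$.
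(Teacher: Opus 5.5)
Your proposal is correct and follows the paper's route: a case-by-case check of $\phi(x)\phi(y)$ against \eqref{eq:FWR}, then $F^{(n)} = v^{(n)}(v^{(n)})^\top$, then an appeal to Theorem~\ref{thm:mean-kernel-limit}. Your handling of the diagonal is more careful than the paper's proof, which declares the diagonal negligible and then asserts the identity for all $(x,y)$: you note that the factorization holds only off $\{x=y\}$, and that a zero-diagonal $F^{(n)}$ differs from $v^{(n)}(v^{(n)})^\top$ by a diagonal matrix of norm at most $1$.
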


\begin{proof}
By definition of $F$ and of $\phi$,
\[
\phi \left(x\right) \phi \left(y\right)
\ = \
\begin{cases}
1, & x, y \in A, \\
\sqrt{c} \, u \left(y\right), & x \in A, \ y \in B, \\
\sqrt{c} \, u \left(x\right), & x \in B, \ y \in A, \\
c u \left(x\right) u \left(y\right), & x, y \in B,
\end{cases}
\]
which matches $F \left(x, y\right)$ case-by-case (the diagonal $x = y$ is negligible for our purposes and can be set to 0). Hence
\[
F \left(x, y\right)
\ = \
\phi \left(x\right) \phi \left(y\right)
\]
for all $\left(x, y\right) \in \left[0, 1\right]^2$.

Now define the vector $v^{\left(n\right)} \in \R^n$ by
\[
v^{\left(n\right)}_k
\ := \
\phi \left(\frac{k}{n}\right),
\qquad
1 \le k \le n.
\]
Then
\[
F \left(\frac{k}{n}, \frac{j}{n}\right)
\ = \
\phi \left(\frac{k}{n}\right) \phi \left(\frac{j}{n}\right)
\ = \
v^{\left(n\right)}_k v^{\left(n\right)}_j,
\]
so
\[
F \left(\frac{k}{n}, \frac{j}{n}\right)
\ = \
v^{\left(n\right)} \left(v^{\left(n\right)}\right)^\top,
\]
and therefore $F^{\left(n\right)}$ has rank one.

Finally, Theorem \ref{thm:mean-kernel-limit} gives
\[
\E[M_{k j}]
\ = \
F \left(\frac{k}{n}, \frac{j}{n}\right) + O \left(n^{-1}\right),
\]
so $\E[M]$ is asymptotically rank--one, with leading term $F \left(\frac{k}{n}, \frac{j}{n}\right) = v^{\left(n\right)} \left(v^{\left(n\right)}\right)^\top$.
\end{proof}

\begin{lemma}[From Tao \cite{Tao2010}]\label{low-rank-pert}
Let $\left\{X_n\right\}$ be a sequence of random Hermitian matrix ensembles such that $\left\{\mu_{\frac{1}{\sqrt{n}} X_n}\right\}$ converges almost surely to a limit $\mu$. Let $\left\{Z_n\right\}$ be another sequence of random Hermitian matrix ensembles such that $\frac{1}{n} \mathrm{rank} \left(Z_n\right)$ converges almost surely to zero. Then $\left\{\mu_{\frac{1}{\sqrt{n}} \left(X_n + Z_n\right)}\right\}$ converges almost surely to $\mu$.
\end{lemma}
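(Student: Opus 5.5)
The statement is Tao's low-rank perturbation lemma, so I would prove it by the standard rank inequality for empirical spectral distributions, with no probabilistic input beyond the almost sure hypotheses. The tool is the interlacing bound. For Hermitian $N\times N$ matrices $A$ and $B$ with empirical spectral distribution functions $F_A$ and $F_B$,
\[
\sup_{\lambda\in\R}\abs{F_{A+B}(\lambda)-F_A(\lambda)} \ \le \ \frac{\mathrm{rank}(B)}{N}.
\]
To prove this, I first treat the rank-one case. Writing $B=\pm vv^*$, Cauchy interlacing (via Weyl's inequalities) shows that the eigenvalues of $A+B$ interlace those of $A$. Hence for every $\lambda$ the counts $\#\{i:\lambda_i(A)\le\lambda\}$ and $\#\{i:\lambda_i(A+B)\le\lambda\}$ differ by at most one. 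For general $B$, I diagonalize $B=\sum_{r=1}^{\mathrm{rank} B}\theta_r w_rw_r^*$ and add the rank-one pieces one at a time. The triangle inequality then gives the bound $\mathrm{rank}(B)/N$.

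Next I apply this with $A=\frac{1}{\sqrt n}X_n$ and $B=\frac{1}{\sqrt n}Z_n$. Scaling does not change rank, so
\[
\norm{F_{\frac{1}{\sqrt n}(X_n+Z_n)}-F_{\frac{1}{\sqrt n}X_n}}_\infty \ \le \ \frac{\mathrm{rank}(Z_n)}{n},
\]
and the right-hand side tends to $0$ almost surely by hypothesis. Now fix an outcome $\omega$ in the intersection of the two full-measure events: the one on which $\mu_{X_n/\sqrt n}\to\mu$ weakly, and the one on which $\mathrm{rank}(Z_n)/n\to 0$. At every continuity point $\lambda$ of $F_\mu$ we have $F_{X_n/\sqrt n}(\lambda)\to F_\mu(\lambda)$. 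Combined with the uniform bound, this gives $F_{(X_n+Z_n)/\sqrt n}(\lambda)\to F_\mu(\lambda)$ at every continuity point. That is weak convergence of probability measures on $\R$, and it holds on a full-measure set, i.e., almost surely.

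The only genuine step is the interlacing bound. Its subtle point is that the counting functions must be compared pointwise in $\lambda$, including at eigenvalues with multiplicity. The min–max formulation handles this cleanly: $\lambda_{i+1}(A)\le\lambda_i(A+vv^*)$-type inequalities hold in any ordering convention. Everything else is bookkeeping. An alternative route works with test functions: for Lipschitz $f$ of bounded variation, $\abs{\int f\,d\mu_{A+B}-\int f\,d\mu_A}\le \norm{f}_{BV}\,\mathrm{rank}(B)/N$. Weak convergence then follows by density of such $f$ in the bounded continuous functions tested against tight families.
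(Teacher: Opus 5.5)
Your proof is correct and follows the standard route. The paper gives no proof of this lemma and simply quotes it from Tao, and the rank inequality $\sup_{\lambda}\abs{F_{A+B}(\lambda)-F_A(\lambda)}\le \mathrm{rank}(B)/N$ you derive from interlacing is the usual argument behind that result. (Weyl's inequalities $\lambda_{i+r}(A+B)\le\lambda_i(A)$ and $\lambda_{i+r}(A)\le\lambda_i(A+B)$, with $r=\mathrm{rank}(B)$ and eigenvalues in decreasing order, give the same bound in one step without splitting $B$ into rank-one pieces.)

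One cosmetic slip: your inline example $\lambda_{i+1}(A)\le\lambda_i(A+vv^*)$ is trivial in decreasing order and fails in general in increasing order. The inequality that actually controls the count for $+vv^*$ is $\lambda_i(A+vv^*)\le\lambda_{i-1}(A)$ in decreasing order. Your conclusion that the counting functions differ by at most one is still right.
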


Lemma \ref{low-rank-pert} applies almost immediately to our case, where scaling by $\sqrt{n}$ is equivalent to scaling by $\sqrt{c} \sqrt{m}$, where $\sqrt{c}$ is constant, so $Y$ and $M / \sqrt{m}$ have the same limiting spectral distribution. Since $\E[M]$ is rank--one by Lemma \ref{lem:EM-rank1}, the Corollary 4.3.9 from \cite{HornJohnson2013} applies: $M / \sqrt{m}$ has at most one outlier. Lemma \ref{lem:AEK-no-outliers} shows further why $Y$ has no other outlier eigenvalues.

\subsection{Poissonization}

\indent Now we use standard results (Lemma \ref{lem:multi-poisson}, Propositions $\ref{prop:perturb-bound}$, $\ref{prop:poisson-dev}$) to construct a matrix $\widetilde M$ whose entries are conditionally independent Poisson random variables.

\begin{lemma}[Multinomial as conditional Poisson]\label{lem:multi-poisson}
Let $X_1, \dots, X_k$ be independent random variables with $X_j \sim \mathrm{Poisson} \left(\lambda_j\right)$ for $j = 1, \dots, k$, and set $\Lambda := \sum_{j = 1}^k \lambda_j$. Then, for any $m \in \mathbb{N}$ and any $\left(x_1, \dots, x_k\right) \in \mathbb{N}^k$ satisfying $\sum_{j = 1}^k x_j = m$,
\[
\P \left[
X_1
\ = \
x_1, \dots, X_k
\ = \
x_k \,\middle|\, \sum_{j = 1}^k X_j
\ = \
m
\right]
\ = \
\frac{m!}{x_1! \cdots x_k!}
\prod_{j = 1}^k \left(\frac{\lambda_j}{\Lambda}\right)^{x_j}.
\]
More concisely, conditional on $\sum_{j = 1}^k X_j = m$, the random vector $\left(X_1, \dots, X_k\right)$ has the probability distribution described by $\mathrm{Multinomial} \left(m; \frac{\lambda_1}{\Lambda}, \dots, \frac{\lambda_k}{\Lambda}\right)$.
\end{lemma}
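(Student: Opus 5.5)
The plan is a direct computation with the Poisson probability mass function and Bayes' rule; no earlier result from the paper is needed. The key fact is that the sum $S := \sum_{j=1}^k X_j$ of independent Poisson variables is again Poisson, with parameter $\Lambda$. I will justify this first, either by induction on $k$ from the two-variable convolution identity
\[
\sum_{a=0}^{s} e^{-\lambda_1}\frac{\lambda_1^a}{a!}\, e^{-\lambda_2}\frac{\lambda_2^{s-a}}{(s-a)!}
\ = \
e^{-(\lambda_1+\lambda_2)}\frac{(\lambda_1+\lambda_2)^s}{s!}
\]
(binomial theorem), or more quickly by multiplying probability generating functions: $\E[z^{S}] = \prod_j e^{\lambda_j(z-1)} = e^{\Lambda(z-1)}$. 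Consequently $\P[S = m] = e^{-\Lambda}\Lambda^m/m! > 0$, so the conditioning is well defined; I assume $\Lambda > 0$, and note that if some $\lambda_j = 0$ then $X_j = 0$ almost surely, consistent with the formula via $0^0 = 1$.

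Next, fix $(x_1,\dots,x_k)\in\mathbb{N}^k$ with $\sum_j x_j = m$. The event $\{X_1 = x_1,\dots,X_k = x_k\}$ is contained in $\{S = m\}$, so the conditional probability equals
\[
\frac{\P[X_1 = x_1,\dots,X_k = x_k]}{\P[S = m]}.
\]
By independence, the numerator is $\prod_j e^{-\lambda_j}\lambda_j^{x_j}/x_j! = e^{-\Lambda}\prod_j \lambda_j^{x_j}/x_j!$.

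Dividing by $e^{-\Lambda}\Lambda^m/m!$ cancels the exponential factor and leaves
\[
\frac{m!}{x_1!\cdots x_k!}\,\frac{\prod_j\lambda_j^{x_j}}{\Lambda^m}.
\]
Writing $\Lambda^m = \prod_j \Lambda^{x_j}$, which uses $\sum_j x_j = m$, gives exactly $\frac{m!}{x_1!\cdots x_k!}\prod_j(\lambda_j/\Lambda)^{x_j}$. This is the multinomial mass function. Its values sum to one over the simplex $\{x\in\mathbb{N}^k : \sum_j x_j = m\}$ by the multinomial theorem, so the conditional law is $\mathrm{Multinomial}(m;\lambda_1/\Lambda,\dots,\lambda_k/\Lambda)$.

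There is no real obstacle. The only step needing care is the closure of independent Poisson variables under summation, together with the degenerate cases ($\lambda_j = 0$, or $\Lambda = 0$, where the conditioning is defined only for $m = 0$). The generating-function argument disposes of the closure property in one line.
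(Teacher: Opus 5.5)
Your proof is correct and complete. The paper gives no proof of this lemma and cites it as a standard result; your argument is the standard derivation: sum independent Poissons via generating functions, divide the joint mass function by $\P[S=m]=e^{-\Lambda}\Lambda^m/m!$, and use $\sum_j x_j=m$ to write $\Lambda^m=\prod_j\Lambda^{x_j}$. Your treatment of the degenerate cases $\lambda_j=0$ and $\Lambda=0$ is also correct, and it covers a detail the paper leaves unaddressed.
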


\begin{prop}[Perturbation bound for Stieltjes transforms; Lu and Yao {\cite[Lemma 18]{LuEquivalencePrinciple2022}}]\label{prop:perturb-bound}
Let $A, B$ be $n \times n$ Hermitian matrices, and let $z = x + i \eta \in \C$ with $\eta > 0$. Denote their resolvents by $G_A \left(z\right) = \left(A - z I\right)^{-1}$ and $G_B \left(z\right) = \left(B - z I\right)^{-1}$, and their Stieltjes transforms by
\[
s_A \left(z\right)
\ = \
\frac{1}{n} \Tr G_A \left(z\right),
\qquad
s_B \left(z\right)
\ = \
\frac{1}{n} \Tr G_B \left(z\right).
\]
Then
\[
\abs{s_A \left(z\right) - s_B \left(z\right)}
\ \le \
\frac{1}{\left(\Im z\right)^2} \cdot \frac{\norm{A - B}_F}{\sqrt{n}}.
\]
\end{prop}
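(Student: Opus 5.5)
The plan is to prove the bound in two steps: first reduce it to a trace-norm estimate via the resolvent identity, then pass to the Frobenius norm with Cauchy--Schwarz.

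\emph{Resolvent identity.} Since $(A-zI)-(B-zI)=A-B$, we have
\[
G_A(z)-G_B(z) \ = \ G_A(z)\,(B-A)\,G_B(z).
\]
Taking normalized traces gives
\[
s_A(z)-s_B(z) \ = \ \frac1n \Tr\!\left(G_A(z)(B-A)G_B(z)\right) \ = \ \frac1n \Tr\!\left((B-A)\,G_B(z)G_A(z)\right),
\]
where the second equality is cyclicity of the trace.

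\emph{Cauchy--Schwarz in the Hilbert--Schmidt inner product.} We apply $\abs{\Tr(XY)}\le \norm{X}_F\norm{Y}_F$ with $X=B-A$ and $Y=G_BG_A$, which yields
\[
\abs{s_A-s_B}\ \le\ \frac1n\norm{A-B}_F\,\norm{G_BG_A}_F .
\]
Next we bound the second factor. Because $A$ and $B$ are Hermitian, $G_A$ and $G_B$ are normal with eigenvalues $(\lambda-z)^{-1}$, so each has operator norm at most $1/\Im z$. Using $\norm{XY}_F\le\norm{X}_{\mathrm{op}}\norm{Y}_F$ and $\norm{G_A}_F\le\sqrt n\,\norm{G_A}_{\mathrm{op}}$, we get
\[
\norm{G_BG_A}_F\ \le\ \norm{G_B}_{\mathrm{op}}\norm{G_A}_F\ \le\ \frac{1}{\Im z}\cdot\frac{\sqrt n}{\Im z}.
\]
Substituting this into the previous inequality gives
\[
\abs{s_A-s_B}\ \le\ \frac{1}{(\Im z)^2}\cdot\frac{\norm{A-B}_F}{\sqrt n},
\]
which is exactly the stated bound.

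The only point needing care is the order of the norm inequalities. Bounding $\Tr$ naively by $n$ times the operator norm would lose the $\sqrt n$ gain. The bound comes out right only if we apply Frobenius--Cauchy--Schwarz once and spend the $\sqrt n$ on exactly one resolvent's Frobenius norm.
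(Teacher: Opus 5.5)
Your proof is correct and complete. The identity $G_A-G_B=G_A(B-A)G_B$, cyclicity of the trace, the Hilbert--Schmidt Cauchy--Schwarz inequality $\abs{\Tr(XY)}\le\norm{X}_F\norm{Y}_F$, and the bounds $\norm{G_B}\le 1/\Im z$ and $\norm{G_A}_F\le\sqrt n/\Im z$ combine to give exactly the stated constant. The paper gives no proof of this proposition; it imports the statement from the cited lemma of Lu and Yao. Your argument therefore supplies a self-contained justification, not an alternative to an argument in the text.

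For comparison, the other standard route works with eigenvalues. Order both spectra increasingly. Then
\[
\abs{s_A(z)-s_B(z)}\le\frac{1}{n}\sum_i\abs{(\lambda_i(A)-z)^{-1}-(\lambda_i(B)-z)^{-1}}\le\frac{1}{n(\Im z)^2}\sum_i\abs{\lambda_i(A)-\lambda_i(B)}.
\]
Cauchy--Schwarz together with the Hoffman--Wielandt inequality $\sum_i\abs{\lambda_i(A)-\lambda_i(B)}^2\le\norm{A-B}_F^2$ then gives the same bound.

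That route needs Hoffman--Wielandt, but it shows the estimate holds for any Lipschitz test function, with $1/(\Im z)^2$ replaced by the Lipschitz constant. Your resolvent route needs only elementary matrix inequalities.
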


\begin{prop}\label{prop:poisson-dev}
Let $S \sim \mathrm{Poisson} \left(\mu\right)$. Then
\[
\E[\abs{S - \mu}]
\ \le \
\sqrt{\mu}.
\]
\end{prop}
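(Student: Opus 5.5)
The plan is to reduce the inequality to the $L^2$ bound by Cauchy--Schwarz (Jensen's inequality for the concave square root), and then use the one explicit property of the Poisson law that is needed, namely that its variance equals its mean. No tail estimates or moment generating functions are required.

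\emph{Step 1.} Establish $\Var(S)=\mu$ for $S \sim \mathrm{Poisson}(\mu)$. Compute the factorial moments from the mass function $e^{-\mu}\mu^s/s!$ by shifting the summation index:
\[
\E[S] \ = \ \sum_{s\ge 1} s\, e^{-\mu}\frac{\mu^s}{s!} \ = \ \mu,
\qquad
\E[S(S-1)] \ = \ \sum_{s\ge 2} s(s-1)\, e^{-\mu}\frac{\mu^s}{s!} \ = \ \mu^2 .
\]
Hence $\E[S^2]=\mu^2+\mu$ and $\Var(S)=\E[S^2]-\mu^2=\mu$. Alternatively, differentiate the generating function $\E[t^S]=e^{\mu(t-1)}$ twice at $t=1$.

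\emph{Step 2.} Apply Cauchy--Schwarz to the product $1\cdot|S-\mu|$:
\[
\E\left[\abs{S-\mu}\right] \ \le \ \left(\E\left[(S-\mu)^2\right]\right)^{1/2} \ = \ \sqrt{\Var(S)} \ = \ \sqrt{\mu}.
\]
This is the claim. It holds for every $\mu \ge 0$ and needs no asymptotics. The degenerate case $\mu=0$, where $S=0$ almost surely, is covered directly.

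There is no real obstacle; the only substantive input is the variance identity in Step 1. In the paper this estimate will be applied entrywise with $\mu=\E[M_{kj}]$, which is $O(1)$ by Theorem \ref{thm:mean-kernel-limit}. It should then bound the Frobenius-norm discrepancy in Proposition \ref{prop:perturb-bound} when $M$ is compared with its Poissonized version $\widetilde M$. For that use, the second-moment form $\E\left[(S-\mu)^2\right]=\mu$ obtained in Step 1 may be the more convenient statement, so it is worth recording both.
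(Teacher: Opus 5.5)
Your proof is correct and matches the paper's argument: Cauchy--Schwarz against the constant $1$, followed by $\Var(S)=\mu$; you also verify the variance identity, which the paper takes as known. One small aside on your closing remark: the paper applies this bound to the row sums $S_k=\sum_{j<k}\widetilde M_{kj}\sim\mathrm{Poisson}(m)$, not entrywise, which gives $\E[\abs{K_k}]\le\sqrt{m}$ in Proposition~\ref{prop:deconditioning}.
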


\begin{proof}
For any random variable $X$ with mean $\mu$, by the Cauchy-Schwarz inequality we have
\[
\E[\abs{X - \mu}]
\ = \
\E[\abs{X - \mu} \cdot 1]
\ \le \
\sqrt{\E[\left(X - \mu\right)^2]} \sqrt{\E[1^2]}
\ = \
\sqrt{\Var \left(X\right)}.
\]
Since $\Var \left(S\right) = \mu$ for a Poisson random variable, the claim follows.
\end{proof}

\medskip

\begin{prop}\label{prop:deconditioning}
Let $n \to \infty$ and $m = c n$ with fixed $c \in \left(0, \frac12\right)$. Let $M$ denote the symmetric B--A multigraph adjacency matrix, and for each $k > t_0$ let
\[
\mathbf d \left(k - 1\right)
\ := \
\left(d_1 \left(k - 1\right), \dots, d_{k - 1} \left(k - 1\right)\right),
\qquad
p_{k j}
\ := \
\frac{d_j \left(k - 1\right)}{D_{k - 1}},
\qquad j < k.
\]
Conditionally on $\mathbf d \left(k - 1\right)$, the row-vector $\left(M_{k 1}, \dots, M_{k, k - 1}\right)$ is multinomial with $m$ trials and probabilities $\left(p_{k 1}, \dots, p_{k, k - 1}\right)$. Define the symmetric matrix $\widetilde M$ by
\[
\widetilde M_{k j} \mid \mathbf d \left(k - 1\right)
\ \sim \
\mathrm{Poisson} \left(\lambda_{k j}\right),
\qquad
\lambda_{k j}
\ := \
m p_{k j},
\qquad j < k,
\]
and set $\widetilde M_{j k} = \widetilde M_{k j}$, $\widetilde M_{k k} = M_{k k} = 0$.

\medskip
Let
\[
Y
\ := \
\frac{M - \E[M]}{\sqrt{m}},
\qquad
\widetilde Y
\ := \
\frac{\widetilde M - \E[\widetilde M]}{\sqrt{m}}.
\]
Then, for each fixed $z \in \mathbb{H}$, as $n \to \infty$
\[
\abs{
\frac{1}{n} \E[\Tr \left(Y - z I\right)^{-1}]
-
\frac{1}{n} \E[\Tr \left(\widetilde Y - z I\right)^{-1}]
}
\ = \
o \left(1\right).
\]
Consequently, $Y$ and $\widetilde Y$ have the same limiting spectral measure via Stieltjes inversion.
\end{prop}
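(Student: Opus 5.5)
We plan to construct $M$ and $\widetilde M$ on a common probability space. Then Proposition~\ref{prop:perturb-bound} reduces the comparison of Stieltjes transforms to a Frobenius-norm estimate, which we control entrywise in expectation.

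\textbf{Coupling.} For each $k > t_0$, conditionally on $\mathcal F_{k-1}$, draw independent $\widetilde M_{kj} \sim \mathrm{Poisson}(\lambda_{kj})$ for $j<k$. Set $N_k := \sum_{j<k} \widetilde M_{kj}$, which is $\mathrm{Poisson}(m)$ because $\sum_j \lambda_{kj} = m$. By Lemma~\ref{lem:multi-poisson}, conditionally on $N_k$ the row $(\widetilde M_{kj})_j$ is $\mathrm{Multinomial}(N_k; p_{k\cdot})$. We then build the true row $(M_{kj})_j$ by deleting $N_k - m$ uniformly chosen balls if $N_k \ge m$, or by adding $m - N_k$ fresh balls with law $p_{k\cdot}$ if $N_k < m$. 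This yields exactly $\mathrm{Multinomial}(m; p_{k\cdot})$, so the degree process, and hence $\mathcal F_k$, evolves as in the B--A model. The initial clique rows coincide for both matrices. Under this coupling,
\[
\sum_{j<k} \abs{M_{kj} - \widetilde M_{kj}} \ = \ \abs{N_k - m},
\]
and since all differences in a row share one sign, $\sum_j (M_{kj}-\widetilde M_{kj})^2 \le \abs{N_k - m}\,\max_j \abs{M_{kj}-\widetilde M_{kj}}$. Also $\E[\widetilde M_{kj}] = \E[\lambda_{kj}] = \E[M_{kj}]$, so the centerings agree and $Y - \widetilde Y = (M - \widetilde M)/\sqrt m$.

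\textbf{Frobenius estimate.} Proposition~\ref{prop:perturb-bound} together with Jensen's inequality gives
\[
\abs{\E s_Y - \E s_{\widetilde Y}} \ \le \ \frac{1}{\eta^2 \sqrt{n m}}\, \E \norm{M - \widetilde M}_F \ \le \ \frac{1}{\eta^2\sqrt{nm}} \Bigl(2\sum_k \E \sum_j (M_{kj}-\widetilde M_{kj})^2\Bigr)^{1/2}.
\]
The crude bound $\sum_j (\cdot)^2 \le \abs{N_k-m}^2$ has expectation $m$. This gives $\E\norm{M-\widetilde M}_F^2 \le 2nm$, which is only $O(1)$ after normalization and is not good enough. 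The main obstacle is therefore to exploit the spreading of the $\abs{N_k-m} \approx \sqrt m$ discrepant balls over $k-1 \asymp n$ columns. Each moved ball lands on column $j$ with probability $p_{kj}$.

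By Theorem~\ref{thm:mean-kernel-limit} and concentration of degrees, $p_{kj} = O(1/n)$ except on a negligible event. The expected number of collisions among $\sqrt m$ balls is then $O(m \cdot \max_j p_{kj}) = O(1)$. Conditionally on $N_k$ and the ball selection, we get
\[
\E \sum_j (M_{kj}-\widetilde M_{kj})^2 \ \le \ \E\abs{N_k - m} + \E (N_k-m)^2 \max_j p_{kj} \ \le \ \sqrt m + O(1),
\]
using Proposition~\ref{prop:poisson-dev} for the first term. For this we need $p_{kj} \le C/n$ with high probability, i.e.\ $d_j(k-1) \le C' n$. This follows from a second-moment or martingale bound on $d_j(t)/\mu_j(t)$, and on the small exceptional event we fall back to the crude bound. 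Summing over $k$ gives $\E\norm{M-\widetilde M}_F^2 = O(n\sqrt m)$. The difference of Stieltjes transforms is therefore $O(\eta^{-2}(n\sqrt m/(nm))^{1/2}) = O(\eta^{-2} m^{-1/4}) = o(1)$.

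\textbf{Conclusion.} Pointwise convergence of $\E s$ for each $z \in \mathbb H$ identifies the limiting measures by Stieltjes inversion. Combined with the standard concentration of $s_Y(z)$ around its mean, which follows from a bounded-difference martingale over rows, this yields equality of the limiting spectral measures of $Y$ and $\widetilde Y$.
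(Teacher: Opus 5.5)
Your argument is correct and follows the paper's route. Both use a row-by-row Poisson/multinomial coupling with matching centerings ($\E[\widetilde M]=\E[M]$), the Lu--Yao bound of Proposition~\ref{prop:perturb-bound}, and $\E\abs{N_k-m}\le\sqrt m$. This gives $\E\norm{M-\widetilde M}_F^2=O(n\sqrt m)$ and a rate $O(\eta^{-2}n^{-1/4})$.

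The one substantive difference is in your favor. The paper gets $\norm{\Delta}_F^2\le\frac{2}{m}\sum_k\abs{K_k}$ by adding up $\pm1$ edits, which tacitly assumes no entry is edited twice; its coupling is also specified only in law. Your explicit construction (uniform deletion, or fresh i.i.d.\ additions) makes the moved balls $\mathrm{Multinomial}(R;p_{k\cdot})$ with $R=\abs{N_k-m}$, so it genuinely accounts for collisions. You can even skip the exceptional event and the bound on $\max_j p_{kj}$. The exact conditional second moment is $R+(R^2-R)\sum_j p_{kj}^2$, and since $D_{k-1}$ is deterministic, $\E\sum_j p_{kj}^2=D_{k-1}^{-2}\sum_j\E[d_j(k-1)^2]=O(1/n)$ by a short second-moment recursion. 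Hence the collision term is $O(1)$ per row.

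One caveat concerns your closing appeal to concentration of $s_Y(z)$ via a bounded-difference martingale over rows. This is not justified as stated, because in the B--A process each row changes the law of all later rows through the degrees. It is also unnecessary: the Lu--Yao bound is pathwise, so you already have $\E\abs{s_Y(z)-s_{\widetilde Y}(z)}=O(\eta^{-2}m^{-1/4})$. The difference therefore tends to $0$ in probability, and a limiting spectral measure of either matrix transfers to the other, which is exactly how the paper concludes.
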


\begin{proof}
Fix $k > t_0$ and let $\mathbf d \left(k - 1\right)$ denote the vector of degrees at time $k - 1$. In the multigraph model, conditional on $\mathbf d \left(k - 1\right)$ the $m$ attachments of vertex $k$ are i.i.d.\ draws to $\left\{1, \dots, k - 1\right\}$ with probabilities
\[
p_{k j}
\ := \
\frac{d_j \left(k - 1\right)}{D_{k - 1}},
\qquad
\sum_{j = 1}^{k - 1} p_{k j}
\ = \
1,
\]
hence
\[
\left(M_{k 1}, \dots, M_{k, k - 1}\right) \ \big| \ \mathbf d \left(k - 1\right)
\ \sim \
\mathrm{Multinomial} \left(m; p_{k 1}, \dots, p_{k, k - 1}\right).
\]
By Lemma \ref{lem:multi-poisson}, there exist independent random variables
\[
\widetilde M_{k j} \ \big| \ \mathbf d \left(k - 1\right)
\ \sim \
\mathrm{Poisson} \left(\lambda_{k j}\right),
\qquad
\lambda_{k j}
\ := \
m p_{k j},
\qquad j < k,
\]
such that, with $S_k := \sum_{j < k} \widetilde M_{k j}$,
\[
\left(\widetilde M_{k 1}, \dots, \widetilde M_{k, k - 1}\right) \ \Big| \ \left(\mathbf d \left(k - 1\right), S_k
\ = \
m\right)
\ \stackrel{d}{=} \
\left(M_{k 1}, \dots, M_{k, k - 1}\right) \ \big| \ \mathbf d \left(k - 1\right).
\]
Here $\stackrel{d}{=}$ denotes equality in distribution, that is, two random objects $X$ and $Y$ satisfy $X \stackrel{d}{=} Y$ if they have the same probability law.

Conditionally on $\mathbf d$, define $\overline M$ row-wise by
\[
\left(\overline M_{k 1}, \dots, \overline M_{k, k - 1}\right)
\ \stackrel{d}{=} \
\left(\widetilde M_{k 1}, \dots, \widetilde M_{k, k - 1}\right) \ \Big| \ \left(S_k
\ = \
m, \mathbf d \left(k - 1\right)\right),
\]
and set $\overline M_{j k} = \overline M_{k j}$, $\overline M_{k k} = 0$. Then $\left(\overline M \mid \mathbf d\right) \stackrel{d}{=} \left(M \mid \mathbf d\right)$, and
\[
\E[\overline M_{k j} \mid \mathbf d]
\ = \
\E[\widetilde M_{k j} \mid \mathbf d]
\ = \
m p_{k j},
\qquad k \ne j.
\]

We introduce $\overline M$ because it conditionally matches the original matrix in distribution, $\left(\overline M \mid \mathbf d\right) \stackrel{d}{=} \left(M \mid \mathbf d\right)$, while being constructed from $\widetilde M$ in the same probability space, so that their difference is an explicit small perturbation we can bound.

Unconditionally, $\E[M] = \E[\overline M] = \E[\widetilde M]$ by the tower property.

Let $\overline Y := \left(\overline M - \E[\widetilde M]\right) / \sqrt{m}$ and $\widetilde Y := \left(\widetilde M - \E[\widetilde M]\right) / \sqrt{m}$. Since $\E[M] = \E[\widetilde M]$, we have
\[
\E\left[\frac{1}{n} \Tr \left(Y - z I\right)^{-1} \ \big| \ \mathbf d\right]
\ = \
\E\left[\frac{1}{n} \Tr \left(\overline Y - z I\right)^{-1} \ \big| \ \mathbf d\right].
\]
Define the ``edit'' difference
\[
\Delta
\ := \
\overline Y - \widetilde Y
\ = \
\frac{\overline M - \widetilde M}{\sqrt{m}}.
\]
Denote $s_{\overline Y} \left(z\right) := \frac{1}{n} \Tr \left(\overline Y - z I\right)^{-1}$ and $s_{\widetilde Y} \left(z\right) := \frac{1}{n} \Tr \left(\widetilde Y - z I\right)^{-1}$. By the perturbation bound for Stieltjes transforms (Proposition \ref{prop:perturb-bound}),
\begin{equation}\label{eq:st-pert}
\abs{s_{\overline Y} \left(z\right) - s_{\widetilde Y} \left(z\right)}
\ \le \
\frac{1}{\left(\Im z\right)^2} \cdot \frac{\norm{\Delta}_F}{\sqrt{n}}.
\end{equation}

We only have to bound $\norm{\Delta}_F$. Let $K_k := S_k - m$. Enforcing the constraint $S_k = m$ in row $k$ can be achieved by at most $\abs{K_k}$ ``edits'', where each changes a single off-diagonal pair $\left(k, j\right), \left(j, k\right)$ by $\pm 1$, which adjusts the row sum by $\pm 1$ while preserving symmetry.

Under the scaling by $\sqrt{m}$, one ``edit'' contributes
\[
\left(\frac{1}{\sqrt{m}}\right)^2 + \left(\frac{1}{\sqrt{m}}\right)^2
\ \le \
\frac{2}{m}
\]
to $\norm{\Delta}_F^2$. Summing over all rows $k > t_0$ and their at most $\abs{K_k}$ ``edits'' gives
\[
\norm{\Delta}_F^2
\ \le \
\frac{2}{m} \sum_{k > t_0} \abs{K_k}.
\]

Taking expectations, we first note that $\sum_{j < k} \lambda_{k j} = m$ and that
\[
S_k \mid \mathbf d \left(k - 1\right)
\ \sim \
\mathrm{Poisson} \left(m\right)
\quad\Longrightarrow\quad
S_k
\ \sim \
\mathrm{Poisson} \left(m\right),
\]
since
\[
\P \left(S_k
\ = \
k\right)
\ = \
\E[\P \left(S_k
\ = \
k \mid \mathbf d \left(k - 1\right)\right)]
\ = \
\E\left[e^{-m} \frac{m^k}{k!}\right]
\ = \
e^{-m} \frac{m^k}{k!}.
\]
With $K_k := S_k - m$ and applying Proposition \ref{prop:poisson-dev},
\[
\E[\abs{K_k}]
\ = \
\E[\abs{S_k - m}]
\ \le \
\sqrt{m}.
\]
Therefore
\[
\E[\norm{\Delta}_F^2]
\ \le \
\frac{2}{m} \sum_{k > t_0} \E[\abs{K_k}]
\ \le \
\frac{2 n}{\sqrt{m}}
\ = \
\frac{2}{\sqrt{c}} \sqrt{n},
\]
and thus $\E[\norm{\Delta}_F] = O \left(n^{1 / 4}\right)$. For fixed $z \in \mathbb{H}$, taking expectations in \eqref{eq:st-pert} results in
\[
\E\left[\abs{s_{\overline Y} \left(z\right) - s_{\widetilde Y} \left(z\right)}\right]
\ \le \
\frac{1}{\left(\Im z\right)^2} \cdot \frac{\E[\norm{\Delta}_F]}{\sqrt{n}}
\ = \
O \left(n^{-1 / 4}\right)
\ \to \
0,
\qquad n \to \infty.
\]

Since $Y \stackrel{d}{=} \overline Y$, we conclude
\[
\E[\abs{s_Y \left(z\right) - s_{\widetilde Y} \left(z\right)}]
\ = \
\E[\abs{s_{\overline Y} \left(z\right) - s_{\widetilde Y} \left(z\right)}]
\ \to \
0,
\qquad n \to \infty.
\]

Moreover, from
\[
\norm{\Delta}_F^2
\ \le \
\frac{2}{m} \sum_{k > t_0} \abs{K_k},
\qquad
\E[\norm{\Delta}_F^2]
\ = \
O \left(\sqrt{n}\right),
\]
we have
\[
\E\left[\frac{\norm{\Delta}_F^2}{n}\right]
\ = \
O \left(n^{-1 / 2}\right),
\]
and hence by Markov's inequality, for any $\varepsilon > 0$,
\[
\P \left(\frac{\norm{\Delta}_F}{\sqrt{n}} \ge \varepsilon\right)
\ = \
\P \left(\frac{\norm{\Delta}_F^2}{n} \ge \varepsilon^2\right)
\ \le \
\frac{1}{\varepsilon^2} \E\left[\frac{\norm{\Delta}_F^2}{n}\right]
\ = \
O \left(n^{-1 / 2}\right)
\ \to \
0,
\qquad n \to \infty.
\]
Combining this with the perturbation bound \eqref{eq:st-pert} gives that, for every $\varepsilon > 0$,
\[
\P \left(\abs{s_{\overline Y} \left(z\right) - s_{\widetilde Y} \left(z\right)} > \varepsilon\right)
\ \to \
0,
\qquad n \to \infty.
\]
Since $Y \stackrel{d}{=} \overline Y$, it follows likewise that for every $\varepsilon > 0$,
\[
\P \left(\abs{s_Y \left(z\right) - s_{\widetilde Y} \left(z\right)} > \varepsilon\right)
\ \to \
0,
\qquad n \to \infty.
\]

By definition, $s_Y \left(z\right)$ and $s_{\widetilde Y} \left(z\right)$ are the Stieltjes transforms of the empirical eigenvalue distributions of $Y$ and $\widetilde Y$, respectively. Since $\widetilde Y$ has a deterministic limiting Stieltjes transform (given by Proposition \ref{prop:AEK}), the convergence in probability of their Stieltjes transforms implies that $Y$ and $\widetilde Y$ have the same limiting spectral measure.
\end{proof}

\begin{prop}[Independent Poisson comparison]\label{prop:independent-poisson-comparison}
For $k > t_0$ and $j < k$, define
\[
\bar \lambda_{k j}
\ := \
\E\left[\widetilde M_{k j}\right]
\ = \
\E\left[M_{k j}\right].
\]
Let $P_{k j}$ be independent random variables with
\[
P_{k j}
\ \sim \
\mathrm{Poisson}\left(\bar \lambda_{k j}\right),
\qquad
k > t_0,\ j < k.
\]
Define the symmetric random matrix $H$ as
\[
H_{k j}
\ := \
\frac{P_{k j}-\bar \lambda_{k j}}{\sqrt m},
\qquad
H_{j k}
\ := \
H_{k j},
\qquad
k > t_0,\ j < k,
\]
and set $H_{k j}=0$ for entries in the initial clique, i.e. when $k \le t_0$ and $j \le t_0$. Then, for every fixed $z \in \mathbb{H}$,
\[
\abs{s_{\widetilde Y}\left(z\right)-s_H\left(z\right)}
\ \to \
0
\]
in probability. Consequently, $\widetilde Y$ and $H$ have the same limiting spectral measure.
\end{prop}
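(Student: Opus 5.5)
The plan is to couple $\widetilde M$ and $H$ on a common probability space so that they differ only by a matrix of small Frobenius norm, and then apply Proposition \ref{prop:perturb-bound} exactly as in the proof of Proposition \ref{prop:deconditioning}. Both $\widetilde Y$ and $H$ are centered by the same deterministic matrix $\bar\lambda/\sqrt m$, since $\E[\widetilde M_{kj}] = \bar\lambda_{kj}$. Both also vanish on the initial clique block, because there $M$, and hence $\widetilde M$, is deterministic. Therefore $\widetilde Y - H = (\widetilde M - P)/\sqrt m$ entrywise off the clique.

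\textbf{Coupling.} I would build the coupling row by row. Conditionally on $\mathbf d(k-1)$, for each $j<k$ let $\delta_{kj} := \lambda_{kj} - \bar\lambda_{kj}$ and take independent variables $Q_{kj}\sim\mathrm{Poisson}(\min\{\lambda_{kj},\bar\lambda_{kj}\})$ and $R_{kj}\sim\mathrm{Poisson}(\abs{\delta_{kj}})$. Then set $\widetilde M_{kj} = Q_{kj} + R_{kj}\mathbf 1_{\delta_{kj}>0}$ and $P_{kj} = Q_{kj} + R_{kj}\mathbf 1_{\delta_{kj}<0}$.

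Given $\mathbf d(k-1)$, the entries $\widetilde M_{kj}$ are independent $\mathrm{Poisson}(\lambda_{kj})$, as required. The $P_{kj}$ have the right marginals, but they also depend on $\mathbf d$. To obtain genuinely independent $P_{kj}$, I would instead use Poisson processes. Take independent unit-rate Poisson processes $N_{kj}$ on $[0,\infty)$ and set $P_{kj} = N_{kj}(\bar\lambda_{kj})$ and $\widetilde M_{kj} = N_{kj}(\lambda_{kj})$. This requires $\lambda_{kj}$ to be computed from a degree process that is independent of $N_{kj}$ for row $k$. That holds because $\mathbf d(k-1)$ is determined by earlier rows of the original process, which we may generate independently of the processes $N_{k\cdot}$; the joint law of $\widetilde M$ given $\mathbf d$ is unchanged. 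In either version,
\[
\abs{\widetilde M_{kj}-P_{kj}} \,\big|\, \mathbf d \ \sim\ \mathrm{Poisson}(\abs{\delta_{kj}}),
\qquad
\E\bigl[(\widetilde M_{kj}-P_{kj})^2\bigr] \ =\ \E\bigl[\abs{\delta_{kj}}+\delta_{kj}^2\bigr].
\]

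\textbf{Degree concentration.} Since $\lambda_{kj} = m\,d_j(k-1)/D_{k-1}$ and $\bar\lambda_{kj}=m\,\mu_j(k-1)/D_{k-1}$ with $D_{k-1}\asymp n^2$, it suffices to show $\Var(d_j(t)) = O(n)$ uniformly in $j,t$. I would prove this with the martingale $Z_j(t) := d_j(t)/\mu_j(t)$; recursion \eqref{eq:mu-rec} shows that it is a martingale.

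Its conditional increment variance is $\Var(\Delta d_j(t)\mid\mathcal F_{t-1})/\mu_j(t)^2 \le m\,d_j(t-1)/(D_{t-1}\mu_j(t)^2)$. Taking expectations and using $\mu_j\asymp n$ and $D\asymp n^2$ from Theorem \ref{thm:mean-kernel-limit}, each increment contributes $O(n^{-2})$. Over at most $n$ steps this gives $\Var(Z_j(t))=O(1/n)$, i.e.\ $\Var(d_j(t))=O(n)$. Hence $\E\abs{\delta_{kj}}\le \sqrt{\E\delta_{kj}^2} = O(m\sqrt n/n^2)=O(n^{-1/2})$.

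\textbf{Conclusion.} Summing over the $O(n^2)$ entries and using symmetry,
\[
\E\norm{\widetilde Y - H}_F^2 \ \le\ \frac{2}{m}\sum_{k,j}\E\bigl[\abs{\delta_{kj}}+\delta_{kj}^2\bigr] \ =\ O\!\left(\frac{n^2\cdot n^{-1/2}}{n}\right) \ =\ O(n^{1/2}).
\]
By Markov's inequality, $\norm{\widetilde Y-H}_F/\sqrt n\to0$ in probability. Proposition \ref{prop:perturb-bound} then gives $\abs{s_{\widetilde Y}(z)-s_H(z)}\to0$ in probability for each fixed $z\in\mathbb H$. This pointwise convergence of Stieltjes transforms yields equality of the limiting spectral measures, exactly as at the end of Proposition \ref{prop:deconditioning}.

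I expect the main obstacle to be the coupling step: keeping the $P_{kj}$ genuinely independent and deterministic-parameter while $\widetilde M$ depends on the evolving degrees, which is why I would use the Poisson-process construction. The variance bound is routine once the martingale $Z_j$ is set up.
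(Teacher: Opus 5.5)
Your proposal is correct and follows essentially the paper's route. The paper uses exactly your first coupling, namely $\mathrm{Poisson}(\min\{\lambda_{kj},\bar\lambda_{kj}\})$ plus an independent $\mathrm{Poisson}(\abs{\lambda_{kj}-\bar\lambda_{kj}})$ excess, so the switch to Poisson processes is unnecessary: conditionally on $(\lambda_{ab})$ the $P_{kj}$ are independent $\mathrm{Poisson}(\bar\lambda_{kj})$ with deterministic parameters, hence unconditionally independent. From there the identity $\E[(\widetilde M_{kj}-P_{kj})^2]=\E[\abs{\delta_{kj}}+\delta_{kj}^2]$, the bound $\Var(\lambda_{kj})=O(1/n)$ and Proposition~\ref{prop:perturb-bound} finish the argument exactly as you describe.

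Your martingale argument with $Z_j(t)=d_j(t)/\mu_j(t)$ is a real improvement. It actually proves $\Var(d_j(t))=O(n)$, whereas the paper only cites this bound ``from the proof of Proposition~\ref{prop:variance-kernel}'', where it is never derived.
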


\begin{proof}
For $k > t_0$ and $j < k$, write
\[
\lambda_{k j}
\ := \
m p_{k j}
\ = \
m \frac{d_j\left(k - 1\right)}{D_{k - 1}},
\qquad
\bar \lambda_{k j}
\ := \
\E\left[\lambda_{k j}\right].
\]
It is enough to compare $H$ with a random matrix with entries having the same distribution as the entries of $\widetilde Y$, because the empirical spectral distribution depends only on the law of the matrix. Let
\[
\widehat Y
\ := \
\frac{\widehat M-\E[\widetilde M]}{\sqrt m}.
\]
We construct $\widehat M$ and $P$ jointly so that $\widehat M$ has the same law as $\widetilde M$, while $P$ has independent Poisson entries.

Note that if $A$ and $B$ are independent Poisson random variables with means $\alpha$ and $\beta$, then $A+B$ is Poisson with mean $\alpha+\beta$.

Fix the collection $\left(\lambda_{a b}\right)$. For all $k,j$, defein
\[
\alpha_{k j}
\ := \
\min\left\{\lambda_{k j},\bar \lambda_{k j}\right\},
\qquad
\beta_{k j}
\ := \
\abs{\lambda_{k j}-\bar \lambda_{k j}}.
\]
Let $A_{k j}$ and $B_{k j}$ be independent Poisson random variables with means $\alpha_{k j}$ and $\beta_{k j}$, respectively, independently over all $k,j$.

If $\lambda_{k j} \ge \bar \lambda_{k j}$, define
\[
P_{k j}
\ := \
A_{k j},
\qquad
\widehat M_{k j}
\ := \
A_{k j}+B_{k j}.
\]
If $\lambda_{k j}<\bar \lambda_{k j}$, define
\[
\widehat M_{k j}
\ := \
A_{k j},
\qquad
P_{k j}
\ := \
A_{k j}+B_{k j}.
\]
In either case, conditional on $\left(\lambda_{a b}\right)$,
\[
\widehat M_{k j}
\ \sim \
\mathrm{Poisson}\left(\lambda_{k j}\right),
\qquad
P_{k j}
\ \sim \
\mathrm{Poisson}\left(\bar \lambda_{k j}\right).
\]
Since $\bar \lambda_{k j}$ is deterministic, the law of $P_{k j}$ does not depend on $\left(\lambda_{a b}\right)$, so $P_{k j}$ are independent Poisson random variables.

Also, conditionally on $\left(\lambda_{a b}\right)$, the variables $\widehat M_{k j}$ have the same law as the Poissonized entries $\widetilde M_{k j}$, so $\widehat Y$ has the same law as $\widetilde Y$.

Now consider that
\[
\widehat M_{k j}-P_{k j}
\ = \
\pm B_{k j},
\]
so
\[
\left(\widehat M_{k j}-P_{k j}\right)^2
\ = \
B_{k j}^2.
\]
Since $B_{k j} \sim \mathrm{Poisson}(\beta_{k j})$ after conditioning on the collection $\left(\lambda_{a b}\right)$, we have
\[
\E_{\widehat M_{k j},P_{k j}}
\left[
\left(\widehat M_{k j}-P_{k j}\right)^2
\mid
\left(\lambda_{a b}\right)
\right]
\ = \
\beta_{k j}+\beta_{k j}^2.
\]
Therefore
\[
\E_{\widehat M_{k j},P_{k j}}
\left[
\left(\widehat M_{k j}-P_{k j}\right)^2
\mid
\left(\lambda_{a b}\right)
\right]
\ = \
\abs{\lambda_{k j}-\bar \lambda_{k j}}
+
\abs{\lambda_{k j}-\bar \lambda_{k j}}^2.
\]
Take expectations of both sides with respect to $\left(\lambda_{a b}\right)$ to get
\begin{align*}
\E_{\left(\lambda_{a b}\right)}\left[\left(\widehat M_{k j}-P_{k j}\right)^2\right]
&\ = \
\E_{\left(\lambda_{a b}\right)}
\left[
\E_{\widehat M_{k j},P_{k j}}
\left[
\left(\widehat M_{k j}-P_{k j}\right)^2
\mid
\left(\lambda_{a b}\right)
\right]
\right] \\
&\ = \
\E_{\left(\lambda_{a b}\right)}
\left[
\abs{\lambda_{k j}-\bar \lambda_{k j}}
+
\abs{\lambda_{k j}-\bar \lambda_{k j}}^2
\right] \\
&\ = \
\E_{\left(\lambda_{a b}\right)}\left[\abs{\lambda_{k j}-\bar \lambda_{k j}}\right]
+
\E_{\left(\lambda_{a b}\right)}\left[\abs{\lambda_{k j}-\bar \lambda_{k j}}^2\right].
\end{align*}

From the proof of Proposition \ref{prop:variance-kernel},
\[
\Var\left(d_j\left(k - 1\right)\right)
\ = \
O\left(n\right).
\]
Since $D_{k - 1}$ is deterministic and $D_{k - 1}=\Theta\left(n^2\right)$,
\[
\Var\left(\lambda_{k j}\right)
\ = \
\Var\left(
m \frac{d_j\left(k - 1\right)}{D_{k - 1}}
\right)
\ = \
\frac{(cn)^2}{D_{k - 1}^2}\Var\left(d_j\left(k - 1\right)\right)
\ = \
O\left(\frac{1}{n}\right).
\]
Therefore
\[
\E\left[\abs{\lambda_{k j}-\bar \lambda_{k j}}^2\right]
\ = \
\Var\left(\lambda_{k j}\right)
\ = \
O\left(\frac{1}{n}\right),
\]
and by Cauchy-Schwarz,
\[
\E\left[\abs{\lambda_{k j}-\bar \lambda_{k j}}\right]
\ \le \
\sqrt{\E\left[\abs{\lambda_{k j}-\bar \lambda_{k j}}^2\right]}
\ = \
O\left(\frac{1}{\sqrt n}\right).
\]
Hence
\[
\E\left[\left(\widehat M_{k j}-P_{k j}\right)^2\right]
\ = \
O\left(\frac{1}{\sqrt n}\right).
\]

Since
\[
\widehat Y-H
\ = \
\frac{\widehat M-P}{\sqrt m}
\]
on the non-deterministic off-diagonal entries, and since deterministic entries contribute zero, we have
\[
\E\left[\norm{\widehat Y-H}_F^2\right]
\ \le \
\frac{2}{m}
\sum_{k > t_0}\sum_{j < k}
\E\left[\left(\widehat M_{k j}-P_{k j}\right)^2\right]
\ \le \
\frac{2}{cn}
\cdot
n^2
\cdot
O\left(\frac{1}{\sqrt n}\right)
\ = \
O\left(\sqrt n\right).
\]
Therefore
\[
\E\left[\norm{\widehat Y-H}_F\right]
\ \le \
\sqrt{\E\left[\norm{\widehat Y-H}_F^2\right]}
\ = \
O\left(n^{1/4}\right).
\]
By Proposition \ref{prop:perturb-bound},
\[
\abs{s_{\widehat Y}\left(z\right)-s_H\left(z\right)}
\ \le \
\frac{1}{\left(\Im z\right)^2}
\frac{\norm{\widehat Y-H}_F}{\sqrt n}.
\]
Taking expectations gives
\[
\E\left[\abs{s_{\widehat Y}\left(z\right)-s_H\left(z\right)}\right]
\ \le \
\frac{1}{\left(\Im z\right)^2}
\frac{O\left(n^{1/4}\right)}{\sqrt n}
\ = \
O\left(n^{-1/4}\right)
\ \to \
0.
\]
Hence
\[
\abs{s_{\widehat Y}\left(z\right)-s_H\left(z\right)}
\ \to \
0
\]
in probability. Since $\widehat Y$ has the same law as $\widetilde Y$, we have that $\widetilde Y$ and $H$ have the same limiting spectral distribution.
\end{proof}

\subsection{Variance for the centered matrix}\label{subsec:variance}

\begin{prop}[Variance kernel for the independent centered matrix]\label{prop:variance-kernel}
Let $H$ be the independent Poisson matrix from Proposition~\ref{prop:independent-poisson-comparison}, with $m = c n$ and fixed $c \in \left(0, \frac12\right)$. For $k > t_0 := m + 1$ and $j < k$, set
\[
p_{k j}
\ := \
\frac{d_j \left(k - 1\right)}{D_{k - 1}},
\qquad
D_{k - 1}
\ := \
\sum_{v=1}^{k-1} d_v \left(k - 1\right).
\]
Then
\[
\Var \left(H_{k j}\right)
\ = \
\E[p_{k j}],
\qquad
n \Var \left(H_{k j}\right)
\ = \
n \E[p_{k j}].
\]
Consequently, if $j/n \to x$ and $k/n \to y$, then
\begin{equation}\label{eq:sigma}
\sigma^2 \left(x, y\right)
\ := \
\lim_{n \to \infty} n \Var \left(H_{k j}\right)
\ = \
\frac{1}{c} F \left(x, y\right).
\end{equation}
Equivalently, with $A = \left[0, c\right]$, $B = \left(c, 1\right]$, and $u \left(t\right) = \left(2 t - c\right)^{-1 / 2}$,
\begin{equation}\label{eq:sigma-piecewise}
\sigma^2 \left(x, y\right)
\ = \
\begin{cases}
0, & x, y \in A,\\[4pt]
\dfrac{1}{\sqrt{c}} u \left(y\right), & x \in A,\ y \in B,\\[8pt]
\dfrac{1}{\sqrt{c}} u \left(x\right), & x \in B,\ y \in A,\\[8pt]
u \left(x\right) u \left(y\right), & x, y \in B,
\end{cases}
\qquad
u \left(t\right)
\ = \
\left(2 t - c\right)^{-1 / 2}.
\end{equation}
\end{prop}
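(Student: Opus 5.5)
The plan is to compute the variance directly from the definition of $H$ in Proposition \ref{prop:independent-poisson-comparison}, and then pass to the limit using Theorem \ref{thm:mean-kernel-limit}. Since later arguments (the proof of Proposition \ref{prop:independent-poisson-comparison}) quote the bound $\Var\left(d_j\left(k-1\right)\right)=O\left(n\right)$ from this proof, I will also establish that bound here.

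\emph{Step 1 (exact variance).} For $k>t_0$ and $j<k$, $P_{kj}\sim\mathrm{Poisson}\left(\bar\lambda_{kj}\right)$ with deterministic mean $\bar\lambda_{kj}=\E[\lambda_{kj}]=m\E[p_{kj}]$. Hence
\[
\Var\left(H_{kj}\right)=\frac{\Var\left(P_{kj}\right)}{m}=\frac{\bar\lambda_{kj}}{m}=\E[p_{kj}],
\]
and multiplying by $n$ gives the second identity.

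\emph{Step 2 (limit).} Since $m=cn$,
\[
n\E[p_{kj}]=\frac{n}{m}\, m\E[p_{kj}]=\frac1c\,\E[M_{kj}].
\]
Here $\E[M_{kj}]=m\mu_j\left(k-1\right)/D_{k-1}$ by \eqref{eq:E-Mij-discrete}. Theorem \ref{thm:mean-kernel-limit} gives $\E[M_{kj}]\to F\left(x,y\right)$, which proves \eqref{eq:sigma}.

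\emph{Step 3 (piecewise form).} Use the factorization $F=\phi\otimes\phi$ from Lemma \ref{lem:EM-rank1}.
\begin{itemize}
\item For $x\in A$, $y\in B$: $F/c=\sqrt c\,u\left(y\right)/c=u\left(y\right)/\sqrt c$.
\item For $x,y\in B$: $F/c=c\,u\left(x\right)u\left(y\right)/c=u\left(x\right)u\left(y\right)$.
\item For $x,y\in A$: both indices are at most $t_0$ asymptotically, so the entry lies in the deterministic initial clique. There $H$ was defined to be $0$, so $\sigma^2=0$ rather than $1/c$.
\end{itemize}
The boundary strip where only one index sits near $c$ has measure zero and can be ignored.

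\emph{Step 4 (degree variance bound, the main work).} Condition on $\mathcal F_{t-1}$ and write $d=d_j\left(t-1\right)$, $q=d/D_{t-1}$. The increment is $\Delta\sim\mathrm{Bin}\left(m,q\right)$, so
\[
\E[d_j\left(t\right)^2\mid\mathcal F_{t-1}]=d^2\left(1+\frac{m}{D_{t-1}}\right)^2+mq\left(1-q\right)-\frac{m\,d^2}{D_{t-1}^2}.
\]
Subtracting $\mu_j\left(t\right)^2=\left(1+m/D_{t-1}\right)^2\mu_j\left(t-1\right)^2$ and taking expectations gives
\[
v_t\le\left(1+\frac{m}{D_{t-1}}\right)^2v_{t-1}+\frac{m\,\mu_j\left(t-1\right)}{D_{t-1}},\qquad v_t:=\Var\left(d_j\left(t\right)\right),\quad v_{s_0}=0.
\]
The additive term is $O\left(1\right)$ by the uniform bound in Theorem \ref{thm:mean-kernel-limit}. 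The multiplicative factor is at most $\left(1+C_1/n\right)^2$, and there are at most $n$ steps, so the cumulative factor is bounded by a constant. Iterating therefore yields $v_t=O\left(n\right)$ uniformly in $j,t$.

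The only delicate point is keeping the constants uniform in $j$ and $k$. This is handled, as in the uniform-bound part of Theorem \ref{thm:mean-kernel-limit}, by $D_{t-1}\ge m\left(m+1\right)$.
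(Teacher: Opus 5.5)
Your Steps 1--3 are correct and essentially reproduce the paper's proof, including the point that the clique entries of $H$ are set to zero, so $\sigma^2=0$ on $A\times A$ rather than $F/c$. Step 4 goes beyond the paper: the paper cites $\Var(d_j(k-1))=O(n)$ ``from the proof'' of this proposition but never proves it, and your recursion supplies it correctly. There is one harmless slip: the conditional second moment is exactly $d^2(1+m/D_{t-1})^2+mq(1-q)$, without your extra $-md^2/D_{t-1}^2$ term, but your resulting inequality for $v_t$ is unaffected.
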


\begin{proof}
Let $H$ be the independent Poisson matrix from Proposition~\ref{prop:independent-poisson-comparison}. For $k > t_0$ and $j < k$, write
\[
\bar \lambda_{k j}
\ := \
\E\left[M_{k j}\right].
\]
By definition,
\[
H_{k j}
\ = \
\frac{P_{k j}-\bar \lambda_{k j}}{\sqrt m},
\qquad
P_{k j}
\ \sim \
\mathrm{Poisson}\left(\bar \lambda_{k j}\right).
\]
Therefore
\[
\Var \left(H_{k j}\right)
\ = \
\frac{1}{m}\Var \left(P_{k j}\right)
\ = \
\frac{\bar \lambda_{k j}}{m}
\ = \
\frac{\E\left[M_{k j}\right]}{m}.
\]
Since $m = c n$, this gives
\[
n \Var \left(H_{k j}\right)
\ = \
\frac{n}{m}\E\left[M_{k j}\right]
\ = \
\frac{1}{c}\E\left[M_{k j}\right].
\]

If $k,j \le t_0$, then $H_{k j}$ is deterministic and equal to $0$, so
\[
\Var \left(H_{k j}\right)
\ = \
0.
\]
Then the variance kernel is zero on $A \times A$.

Now assume that the vertices are not in the initial clique. By symmetry, it is sufficient to consider $k > t_0$ and $j < k$. By Theorem \ref{thm:mean-kernel-limit}, with $x = j/n$ and $y = k/n$,
\[
\E\left[M_{k j}\right]
\ \to \
F \left(x, y\right).
\]
Therefore
\[
n \Var \left(H_{k j}\right)
\ = \
\frac{1}{c}\E\left[M_{k j}\right]
\ \to \
\frac{1}{c}F \left(x, y\right).
\]
Hence
\[
\lim_{n \to \infty} n \Var \left(H_{k j}\right)
\ = \
\frac{1}{c}F \left(x, y\right)
\]
outside $A \times A$.

So the variance kernel is
\[
\sigma^2 \left(x, y\right)
\ = \
\begin{cases}
0, & x, y \in A,\\[4pt]
\dfrac{1}{c}F \left(x, y\right), & \text{otherwise}.
\end{cases}
\]
Then \eqref{eq:sigma-piecewise} follows from the definition of $F$ in \eqref{eq:FWR}.
\end{proof}

\begin{prop}\label{sigma-rank-two}
We have $\sigma^2$ is rank-two.
\end{prop}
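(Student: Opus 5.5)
We write the kernel of \eqref{eq:sigma-piecewise} as a sum of two separable terms built from two functions on $[0,1]$, and then check that these two functions are linearly independent, so the rank is exactly two. Let
\[
a(x) \ := \ \mathbf{1}_{\{x \in A\}}, \qquad b(x) \ := \ \mathbf{1}_{\{x \in B\}}\, u(x),
\]
so that $\phi = a + \sqrt{c}\, b$ in the notation of Lemma \ref{lem:EM-rank1}. Since $\sigma^2 = \frac1c F$ off $A\times A$ and $F = \phi\otimes\phi$, expanding $\phi(x)\phi(y)$ gives
\[
\frac1c F(x,y) \ = \ \frac1c\, a(x)a(y) + \frac{1}{\sqrt c}\bigl(a(x)b(y) + b(x)a(y)\bigr) + b(x)b(y).
\]
Passing to $\sigma^2$ removes the $A\times A$ block, which is exactly the term $\frac1c\, a\otimes a$. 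Hence
\[
\sigma^2 \ = \ \frac{1}{\sqrt c}\,(a\otimes b + b\otimes a) + b\otimes b .
\]
This can be checked block by block against \eqref{eq:sigma-piecewise}, using that $a$ and $b$ have disjoint supports; the diagonal $x=y$ is a null set and is ignored, as in Lemma \ref{lem:EM-rank1}.

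This decomposition shows the integral operator $T_{\sigma^2} f(x) = \int_0^1 \sigma^2(x,y) f(y)\,dy$ has range inside $\mathrm{span}\{a,b\}$, so its rank is at most two. In the basis $(a,b)$ the kernel is the quadratic form $\sum_{i,j} C_{ij} e_i(x)e_j(y)$ with coefficient matrix
\[
C \ = \ \begin{pmatrix} 0 & 1/\sqrt c \\ 1/\sqrt c & 1 \end{pmatrix}, \qquad \det C \ = \ -\frac1c \ \neq \ 0 .
\]
One could also symmetrise this as a difference of two rank-one terms. This is the form that feeds into the QVE reduction: with $\sqrt{c}$ scalings it produces the two unknowns $K$ and $L$ in \eqref{eq:QVE-system}.

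For the lower bound, $a$ and $b$ are nonzero in $L^2[0,1]$ with disjoint supports $A$ and $B$, each of positive measure since $0<c<1$. Note that $u$ is bounded on $B$ because $2x-c > c > 0$ there. So $a$ and $b$ are orthogonal and linearly independent. The rank of $T_{\sigma^2}$ equals the rank of $G^{1/2} C G^{1/2}$, where $G = \mathrm{diag}(\|a\|^2, \|b\|^2)$ is the Gram matrix. Both $G$ and $C$ are invertible, so the rank is exactly two.

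The only real obstacle is bookkeeping: the claim is the block-by-block match in the first step, in particular that removing the initial clique block cancels exactly the $a\otimes a$ term and no mixed terms. The same computation transfers to the discretised matrix $\bigl(\sigma^2(k/n, j/n)\bigr)_{k,j}$ with vectors $a^{(n)}, b^{(n)}$, which gives rank exactly two for the finite-$n$ profile as well.
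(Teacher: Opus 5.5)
Your proof is correct. The upper bound is the paper's: the same decomposition $\sigma^2=\frac{1}{\sqrt c}(a\otimes b+b\otimes a)+b\otimes b$ with $a=\phi_1$, $b=\phi_2$. You derive it by deleting the $\frac1c\,a\otimes a$ term from $\frac1c\,\phi\otimes\phi$, while the paper reads it off \eqref{eq:sigma-piecewise} directly.

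The lower bound is where the routes differ. The paper argues by contradiction: if $\sigma^2=\psi\otimes\psi$, vanishing on $A\times A$ forces $\psi=0$ on $A$, contradicting $\sigma^2>0$ on $A\times B$. This is short and uses the zero block directly, though it is written only for the positive form $\psi\otimes\psi$ (the case $-\psi\otimes\psi$ goes through identically). You instead note that $a,b$ are nonzero and orthogonal. So $T_{\sigma^2}$ vanishes on $\{a,b\}^\perp$ and acts on $\mathrm{span}\{a,b\}$ by $G^{1/2}CG^{1/2}$, and $\det C=-1/c\neq0$ finishes the argument. Your computation needs no case analysis and gives more than the rank. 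Since $\det C<0$, the operator has exactly one positive and one negative eigenvalue, so the pointwise nonnegative kernel $\sigma^2$ is indefinite as an operator. The contradiction argument does not reveal this.

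One caution about your closing sentence. The sampled matrix $\bigl(\sigma^2(k/n,j/n)\bigr)_{k,j}$ has rank exactly two only if you keep the diagonal values $u(k/n)^2$ on $B$ that \eqref{eq:sigma-piecewise} prescribes. With the zero diagonal of \eqref{eq:FWR}, it is a rank-two matrix minus a diagonal matrix of rank about $(1-c)n$, so its rank is of order $n$. Likewise, the actual variance profile $S^{(n)}$ of $H$ is not of rank two. It has zero diagonal and positive off-diagonal entries on $B$, so any $3\times 3$ principal minor on three vertices of $B$ equals $2xyz>0$. This does not affect the proposition, which concerns the limiting kernel.
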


\begin{proof}
Define
\[
\phi_1 \left(x\right)
\ = \
\mathbf 1_{x \in A},
\qquad
\phi_2 \left(x\right)
\ = \
u \left(x\right) \mathbf 1_{x \in B}.
\]
Then $\sigma^2$ from \eqref{eq:sigma-piecewise} can be expressed using only $\phi_1$ and $\phi_2$:
\[
\sigma^2 \left(x, y\right)
\ = \
\frac{1}{\sqrt{c}} \phi_1 \left(x\right) \phi_2 \left(y\right)
+
\frac{1}{\sqrt{c}} \phi_2 \left(x\right) \phi_1 \left(y\right)
+
\phi_2 \left(x\right) \phi_2 \left(y\right).
\]

Thus $\sigma^2 \left(x, y\right)$ lies in the span of the two functions $\phi_1, \phi_2$, so it has rank at most~$2$. If it were rank-one, where $\sigma^2 \left(x, y\right) = \psi \left(x\right) \psi \left(y\right)$ for some $\psi$, then $\sigma^2 = 0$ on $A \times A$ forces $\psi = 0$ on $A$, contradicting $\sigma^2 \left(x, y\right) > 0$ on $A \times B$, so the operator has to be rank-two.
\end{proof}

\subsection{Quadratic vector equations (QVE)}

\begin{prop}[From \cite{AEK2017}]\label{prop:AEK}
Let $H = H^{(N)}$ be a self-adjoint $N \times N$ random matrix with centered, independent entries (up to symmetry). Set
\[
s_{k j}
\ := \
\E[\abs{h_{k j}}^2],
\qquad
S
\ = \
\left(s_{k j}\right)_{k, j = 1}^N.
\]
Assume the variance matrices $S^{(N)}$ satisfy the following.
\begin{itemize}
\item(A) Flatness: $s_{k j} \le C / N$ for all $k, j$, with a constant $C > 0$ that is independent of $N$.
\item(B) Uniform primitivity: there exist $p > 0$ and $L \in \mathbb{N}$ with $\left(S^L\right)_{k j} \ge p / N$ for all $k, j$.
\item(C) Bounded solution: the solution below obeys $\abs{m_k \left(z\right)} \le P$ for all $z$ in the upper half-plane $\mathbb{H}$ and all $k$.
\item(D) Moment bounds: $\E[\abs{h_{k j}}^b] \le \mu_b s_{k j}^{b / 2}$ for all $b \in \mathbb{N}$, all $k, j$.
\end{itemize}

Define the quadratic vector equation (QVE): for each $z \in \mathbb{H}$ there is a unique vector $\mathbf{m} \left(z\right) = \left(m_1 \left(z\right), \dots, m_N \left(z\right)\right) \in \left(\mathbb{H}\right)^N$ solving
\begin{equation}\label{AEK-QVE}
-\frac{1}{m_k \left(z\right)}
\ = \
z + \sum_{j = 1}^N s_{k j} m_j \left(z\right),
\qquad k = 1, \dots, N,
\end{equation}
and $\Im m_k \left(z\right) > 0$ for $\Im z > 0$.

Define
\[
\rho^{(N)} \left(\tau\right)
\ := \
\frac{1}{\pi N} \sum_{k = 1}^N \Im m_k \left(\tau + i 0\right).
\]

As $N \rightarrow \infty$, the empirical eigenvalue distribution of $H$ converges to the deterministic measure with density $\rho^{(N)}$.

Continuous version: if in addition
\[
s^{(N)}_{k j}
\ = \
\frac{1}{N} f \left(\frac{k}{N}, \frac{j}{N}\right)
\]
for a bounded, symmetric, nonnegative $f : \left[0, 1\right]^2 \to \R_+$, then as $N \to \infty$ the discrete system converges to the continuous QVE
\begin{equation}\label{eq:QVE-cont}
-\frac{1}{m \left(x; z\right)}
\ = \
z + \int_0^1 f \left(x, y\right) m \left(y; z\right) \, d y,
\qquad x \in \left[0, 1\right],\ z \in \mathbb{H},
\end{equation}
and the limiting eigenvalue density is
\begin{equation}\label{eq:AEK-density-cont}
\rho \left(\tau\right)
\ = \
\frac{1}{\pi} \int_0^1 \Im m \left(x; \tau + i 0\right) \, d x,
\qquad \tau \in \R.
\end{equation}
\end{prop}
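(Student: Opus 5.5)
Since this statement is quoted from \cite{AEK2017}, the plan is to reconstruct its three logical layers in order: (i) existence and uniqueness of the solution $\mathbf m(z)$ of \eqref{AEK-QVE}, (ii) identification of $\frac1N\sum_k m_k(z)$ as the deterministic limit of the normalized resolvent trace $\frac1N\Tr(H-z)^{-1}$, and (iii) passage from the discrete system to the continuous QVE \eqref{eq:QVE-cont} when $s_{kj}=N^{-1}f(k/N,j/N)$.

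\emph{Step (i).} For existence and uniqueness I would regard the right-hand side of \eqref{AEK-QVE} as a map
\[
\Phi(\mathbf m)_k \ = \ -\bigl(z+(S\mathbf m)_k\bigr)^{-1}
\]
on $\mathbb H^N$. Because $s_{kj}\ge0$, $S$ sends $\mathbb H^N$ into $\overline{\mathbb H}^N$. Hence $z+S\mathbf m\in\mathbb H$, and $\Phi$ maps $\mathbb H^N$ to itself. In the coordinatewise hyperbolic metric on $\mathbb H$, the maps $w\mapsto -1/w$ and $w\mapsto z+w$ are non-expansive. The shift by $z$, with $\Im z>0$, makes $\Phi$ a strict contraction on the region $\{\Im m_k\ge \eta_0\}$ for suitable $\eta_0(z)>0$. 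The Banach fixed point theorem then gives a unique solution. Analyticity in $z$ follows from the implicit function theorem. The Nevanlinna property shows that each $m_k$ is the Stieltjes transform of a probability measure, which makes $\rho^{(N)}$ well defined; the bound (C) guarantees that the boundary values $m_k(\tau+i0)$ exist.

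\emph{Step (ii).} Let $G=(H-z)^{-1}$. By the Schur complement formula,
\[
-\frac{1}{G_{kk}} \ = \ z-h_{kk}+\sum_{i,j\ne k}h_{ki}G^{(k)}_{ij}h_{jk}.
\]
Independence of row $k$ from the minor $G^{(k)}$, together with the flatness (A) and the moment bounds (D), gives by large deviation estimates for quadratic forms
\[
\sum_{i,j\ne k}h_{ki}G^{(k)}_{ij}h_{jk} \ = \ \sum_j s_{kj}G_{jj}+\mathcal O(N^{-1/2}\eta^{-C})
\]
with overwhelming probability, for fixed $z=E+i\eta$. Replacing $G^{(k)}_{jj}$ by $G_{jj}$ costs $\mathcal O(1/(N\eta))$ by eigenvalue interlacing. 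Thus $\mathbf g=(G_{kk})_k$ solves \eqref{AEK-QVE} up to an additive error $\mathbf d$ with $\|\mathbf d\|_\infty\to0$. The crux is a stability estimate of the form
\[
\|\mathbf g-\mathbf m\|_\infty \ \lesssim \ \|(1-\mathrm{diag}(\mathbf m)^2S)^{-1}\|\,\|\mathbf d\|_\infty,
\]
where the inverse is bounded by (B) and (C). Uniform primitivity (B) rules out the eigenvalue $1$ of the symmetrized operator $|\mathbf m|S|\mathbf m|$ away from the spectral edges; at fixed $\eta>0$ one also has the trivial bound coming from $\Im z>0$. This yields $\frac1N\Tr G-\frac1N\sum_k m_k\to0$ in probability for each $z\in\mathbb H$. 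Since both sides are Stieltjes transforms, and convergence on a countable dense subset of $\mathbb H$ suffices, weak convergence of the empirical spectral distribution to $\rho^{(N)}$ follows.

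\emph{Step (iii).} For the continuous version, I would embed the discrete vector $\mathbf m$ as a step function $\tilde m(x)=m_{\lceil Nx\rceil}$. Then $\tilde m$ solves \eqref{eq:QVE-cont} with $f$ replaced by its step approximation $f_N$, and $\|f_N-f\|_{L^1}\to0$ whenever $f$ is a.e.\ continuous, as is the case for our $\sigma^2$. The same contraction and stability argument, run in $L^\infty([0,1])$, shows that the solution depends continuously on $f$ in $L^1$ at fixed $z$. Hence $\tilde m\to m(\cdot;z)$, and \eqref{eq:AEK-density-cont} follows by averaging and Stieltjes inversion.

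I expect the main obstacle to be the stability bound in step (ii), namely controlling $(1-\mathbf m^2S)^{-1}$ uniformly in $N$. If it proves delicate, I would first prove only the fixed-$\eta$ version, which is all that weak convergence in probability requires. There $\Im z$ alone gives $\|\mathbf m\|_\infty\le 1/\eta$ and a contraction constant below $1$, so (B) and (C) are needed only for the regularity of the density and not for the convergence statement itself.
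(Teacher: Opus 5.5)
The paper gives no argument for this proposition. It is quoted from \cite{AEK2017} and used as a black box. In that source it follows from the local law, which is proved along essentially your main line in Step (ii): Schur complement, large-deviation bounds for the quadratic form, and stability of the QVE through $1-\mathbf m^2S$, which is where (B) and (C) enter.

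Your fixed-$\eta$ fallback is a genuinely more elementary route, and it is the one worth making primary. The hyperbolic contraction of $\Phi$ has a constant depending only on $\eta$ and $\max_k\sum_j s_{kj}\le C$. It therefore converts $\|\mathbf d\|_\infty\to0$ (from (A) and (D)) directly into $\max_k|G_{kk}-m_k|\to0$ in probability. The second-moment bound $\frac1N\sum_{k,j}s_{kj}\le C$ then gives the tightness you need to pass from Stieltjes transforms to weak convergence against the $N$-dependent targets; you did not mention this, and it is required.

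What your route buys is that (B) and (C) are not needed for the weak-convergence part of Theorem~\ref{thm:bulk-spectrum}, only for the bounded density. Your continuity-in-$f$ step would also absorb a uniform $o(1/n)$ mismatch between $s^{(n)}_{kj}$ and $\frac1n\sigma^2(k/n,j/n)$. The quoted continuous version assumes exact sampling and does not literally cover that mismatch. What the citation buys the paper, and your route does not, is the local law and the no-outlier statement of Lemma~\ref{lem:AEK-no-outliers}. The paper uses that lemma later to get $\|H\|=O_{\P}(1)$.

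Several steps need tightening.

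\emph{Step (i).} Strict contraction comes from an upper bound on $\Im(S\mathbf m)_k$, i.e.\ on $\|\mathbf m\|_\infty$, not from a lower bound on $\Im m_k$. This bound is automatic after one iteration, since $|\Phi(\mathbf m)_k|\le1/\eta$.

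\emph{Step (ii).} Interlacing controls only the unweighted trace. For $\sum_j s_{kj}(G_{jj}-G^{(k)}_{jj})$, use $G_{jj}-G^{(k)}_{jj}=G_{jk}G_{kj}/G_{kk}$ together with the Ward identity $\sum_j|G_{jk}|^2=\Im G_{kk}/\eta$.

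\emph{Role of (B).} Your account is off. In the bulk the Perron--Frobenius eigenvalue of $|\mathbf m|S|\mathbf m|$ does tend to $1$ as $\eta\downarrow0$. Invertibility of $1-\mathbf m^2S$ comes from the phase $e^{-2i\arg\mathbf m}$ staying away from $1$, together with the spectral gap that (B) provides. The linearized bound also needs an a priori closeness (continuity in $\eta$) argument before it applies. None of this affects the fixed-$\eta$ route.

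\emph{Step (iii).} $\|f_N-f\|_{L^1}\to0$ does not make $\int_0^1(f_N-f)(x,y)\tilde m(y)\,dy$ small uniformly in $x$, so the $L^\infty$ stability argument does not apply as stated. There are two ways to repair it. One is to note that for $\sigma^2$ the grid is aligned with the jump at $c$ (since $cn\in\mathbb Z$) and $\sigma^2$ is uniformly continuous on each block, so $f_N\to f$ uniformly. The other is a compactness argument. Along a subsequence $\tilde m_N\to m_\infty$ weak-$*$. Then $\int_0^1 f_N(x,y)\tilde m_N(y)\,dy\to\int_0^1 f(x,y)m_\infty(y)\,dy$ for a.e.\ $x$ along a further subsequence. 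Hence $\tilde m_N$ converges a.e., and $m_\infty$ solves \eqref{eq:QVE-cont}, whose solution is unique.

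You are right that the continuous version needs some regularity of $f$. Sampling a merely bounded measurable $f$ at $(k/N,j/N)$ is meaningless, and the proposition as quoted omits this hypothesis.
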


\begin{prop}[Verification of the AEK assumptions for the B--A multigraph variance profile]\label{prop:check-sigma-conditions}
Let $H^{(n)} = \left(H_{k j}\right)_{1 \le k, j \le n}$ be the independent Poisson matrix from Proposition~\ref{prop:independent-poisson-comparison}, and let
\[
s^{(n)}_{k j}
\ := \
\E\left[\abs{H_{k j}}^2\right]
\]
be its variance profile. Set $N := n$, and let $S^{(n)} := \left(s^{(n)}_{k j}\right)_{k, j = 1}^N$.

Then, for all sufficiently large $n$, the matrices $S^{(n)}$ satisfy the assumptions \emph{(A)--(D)} of Proposition \ref{prop:AEK} with constants independent of $n$, and the corresponding limiting variance kernel is $\sigma^2$ from~\eqref{eq:sigma-piecewise}.
\end{prop}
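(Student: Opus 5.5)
The plan is to verify each of (A)--(D) directly from the explicit form of $s^{(n)}_{kj}$. By construction of $H$ in Proposition \ref{prop:independent-poisson-comparison},
\[
s^{(n)}_{kj} \ = \ \frac{\bar\lambda_{kj}}{m}, \qquad \bar\lambda_{kj} \ = \ \E[M_{kj}],
\]
for the random pairs, and $s^{(n)}_{kj}=0$ on the initial clique block. Everything will rest on a two-sided bound for $\bar\lambda_{kj}$ off the clique. The upper bound $\bar\lambda_{kj}\le C$ is the uniform estimate of Theorem \ref{thm:mean-kernel-limit}. For the lower bound I use $\mu_j(k-1)\ge \mu_j(s_0)=m$ (the product in \eqref{eq:mu-product} has factors $>1$) together with $D_{k-1}\le D_n\le 2mn$. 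This gives
\[
\bar\lambda_{kj} \ \ge \ \frac{m\cdot m}{2mn} \ = \ \frac{c}{2}.
\]
Hence $c/(2m)\le s^{(n)}_{kj}\le C/m$ off the $A\times A$ block.

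\emph{(A).} Flatness is immediate from the upper bound, since $C/m = (C/c)/n$.

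\emph{(D).} For a centered Poisson variable with mean $\lambda\in[c/2,C]$, every absolute central moment $\E|P-\lambda|^b$ is bounded by a constant $C_b$ depending only on $b$, $c$ and $C$. Since $\lambda^{b/2}\ge (c/2)^{b/2}$, this gives $\E|P-\lambda|^b\le \mu_b\lambda^{b/2}$ with $\mu_b := C_b (2/c)^{b/2}$. Dividing by $m^{b/2}$ yields $\E|H_{kj}|^b\le \mu_b s_{kj}^{b/2}$. The deterministic zero entries satisfy (D) trivially.

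\emph{(B).} I will take $L=2$. For any $k,j$, restrict the sum defining $(S^2)_{kj}$ to intermediate indices $l$ in $B$-range, i.e. $l>t_0$ with $l\neq k,j$. Each such term satisfies $s_{kl}s_{lj}\ge (c/2m)^2$. There are at least $(1-c)n-3$ such $l$, so
\[
(S^2)_{kj} \ \ge \ \bigl((1-c)n-3\bigr)\frac{c^2}{4c^2n^2} \ \ge \ \frac{p}{n}
\]
for large $n$. This covers the $A\times A$ block as well, where $S$ itself vanishes.

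\emph{Identification of the kernel.} I would combine $n s^{(n)}_{kj}=\E[M_{kj}]/c$ with the $O(n^{-1})$ expansion in the proof of Theorem \ref{thm:mean-kernel-limit}, as in Proposition \ref{prop:variance-kernel}. The expansion is uniform for $x,y$ in compact sets, and the uniform bounds handle the rest. This shows that the step kernel $n s^{(n)}_{\lfloor xn\rfloor\lfloor yn\rfloor}$ converges to $\sigma^2$ from \eqref{eq:sigma-piecewise} pointwise off the diagonal and boundedly, hence in $L^1([0,1]^2)$. That is the sense needed for the continuous QVE \eqref{eq:QVE-cont}.

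\emph{(C), the main obstacle.} The bounded-solution condition does not follow from flatness and primitivity alone, because of the zero block on $A\times A$. My first approach is to invoke the AEK sufficient criterion for boundedness of $\mathbf m$ via block fully indecomposable variance profiles. Partition $\{1,\dots,n\}$ into $K$ equal consecutive blocks with $K$ fixed, and compare $S$ from below with $\frac{c}{2m}$ times the $0/1$ block matrix $Z$ that vanishes exactly on blocks inside $A\times A$. A zero submatrix of $Z$ has size at most about $cK\times cK$. Since $c<\tfrac12$, one has $cK+cK<K$, so $Z$ has no $r\times s$ zero submatrix with $r+s\ge K$. Hence $Z$ is fully indecomposable, and the AEK theorem gives $\sup_{z,k}|m_k(z)|\le P$ uniformly in $n$. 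This is exactly where the hypothesis $c<\tfrac12$ enters, and it is the step I expect to need the most care: choosing $K$ so that the clique boundary aligns with the blocks, and matching the precise hypotheses of the cited AEK result. If that theorem is not directly applicable, the fallback is to use the rank-two structure of Proposition \ref{sigma-rank-two}. Then the QVE reduces to the two-scalar system \eqref{eq:QVE-system}, and I would bound $|m(x;z)|$ directly from $|z+u(x)K(z)|$ and $|z+L(z)/\sqrt c|$ being bounded away from zero.
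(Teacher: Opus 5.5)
Your proposal is correct and follows essentially the same route as the paper: the two-sided bound $c/(2m)\le s^{(n)}_{kj}\le C/m$ off the clique gives (A), (B) with $L=2$, and (D). Condition (C) then comes from the AEK block-fully-indecomposable criterion, where $c<\tfrac12$ keeps the $A\times A$ zero block too small to violate full indecomposability.

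One small fix is needed. Since $H_{kk}=0$, the entrywise comparison $S\ge \frac{c}{2m}Z$ fails on the diagonal if $Z_{ii}=1$ for the $B$-blocks, so take $Z$ with zero diagonal, as the paper does. Any zero submatrix containing such a diagonal entry is then $1\times1$, so full indecomposability still holds for $K\ge3$. Also let $Z$ vanish on all pairs of blocks that meet $A$, rather than only blocks contained in $A$; this sidesteps the boundary alignment you flagged.
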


\begin{proof}
This is verified in Appendix \ref{app:QVE-conditions}.
\end{proof}

\begin{lemma}[From \cite{AEK2017}]
\label{lem:AEK-no-outliers}
Let $H^{(n)}$ be a centered Wigner--type matrix -- a real symmetric matrix with entries $h_{k j}$ independent for $k \leq j$ and $\E[h_{k j}] = 0$ for all $k, j$. Let the variance profile of $H^{(n)}$ satisfy assumptions \textnormal{(A)--(D)} from Proposition \ref{prop:AEK}, and let $\rho$ be the deterministic limiting density obtained from the corresponding QVE. Let $\mathrm{supp} \, \rho = \bigcup_{r = 1}^R \left[\alpha_r, \beta_r\right]$. Then, it holds asymptotically with overwhelming probability that all eigenvalues of $H^{(n)}$ lie inside $\mathrm{supp} \, \rho$. If $E_n$ denotes the stated event, then $E_n$ holds asymptotically with overwhelming probability if for every $D > 0$ there exists $N_D \in \mathbb{N}$ such that for all $n \ge N_D$,
\[
\P \left(E_n\right)
\ \ge \
1 - n^{-D}.
\]
Equivalently, $H^{(n)}$ has no outlier eigenvalues.
\end{lemma}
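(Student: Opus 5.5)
The plan is to obtain the lemma as a consequence of the local law for Wigner-type matrices in \cite{AEK2017}, applied under assumptions (A)--(D) of Proposition \ref{prop:AEK}. We work with the $n$-dependent self-consistent density $\rho=\rho^{(N)}$ and its support $\bigcup_{r=1}^R[\alpha_r,\beta_r]$, which is the object that appears in the AEK statement. The argument is a standard ``no eigenvalue in a region where the imaginary part of the Stieltjes transform is too small'' argument, carried out separately on three regions of the real line.

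\textbf{Far region.} First we produce an a priori norm bound. Since the entries satisfy the flatness condition (A) and the moment bounds (D), a moment or net argument gives
\[
\norm{H^{(n)}}\le C_0
\]
with overwhelming probability. Hence eigenvalues cannot lie at distance greater than a constant from $\mathrm{supp}\,\rho$. Assumption (C), the bounded solution $\abs{m_k}\le P$, guarantees that $\mathrm{supp}\,\rho$ is contained in a fixed compact interval.

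\textbf{Bulk-gap region.} Next, fix $\tau$ outside $\mathrm{supp}\,\rho$ at distance at least $n^{-2/3+\varepsilon}$ from it. Take $z=\tau+i\eta$ with $\eta=n^{-2/3+\varepsilon/2}\cdot\mathrm{dist}(\tau,\mathrm{supp}\,\rho)^{-1/2}\wedge 1$, or a comparable choice adapted to the square-root or cubic-cusp edge behaviour established in \cite{AEK2017}.

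The AEK local law, in its averaged form outside the spectrum, gives
\[
\frac1n\Im\Tr G(z)\le \Im\langle m(z)\rangle+\frac{n^{\varepsilon/4}}{n\eta}
\]
with overwhelming probability. Because $\tau$ is outside the support, $\Im\langle m(z)\rangle\lesssim\eta/\mathrm{dist}(\tau,\mathrm{supp}\,\rho)^{1/2}$.

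On the other hand, an eigenvalue $\lambda$ with $\abs{\lambda-\tau}\le\eta$ would force
\[
\frac1n\Im\Tr G(z)\ge \frac{1}{2n\eta}.
\]
This contradicts the upper bound from the local law. We then take a union bound over an $n^{-3}$-grid of $\tau$. Resolvents are Lipschitz in $z$ with constant $\eta^{-2}$, which passes the estimate from grid points to all $\tau$. This removes every eigenvalue at distance at least $n^{-2/3+\varepsilon}$ from the support, including from inside the internal gaps $(\beta_r,\alpha_{r+1})$, with probability at least $1-n^{-D}$.

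\textbf{Edge region.} Finally, for the shrinking window of width $n^{-2/3+\varepsilon}$ around each edge, we invoke the rigidity and edge-location estimates of \cite{AEK2017}. These locate the extreme eigenvalues of each band at the edges $\alpha_r,\beta_r$ up to the intrinsic scale. Since the lemma as worded is a statement \emph{as $n\to\infty$}, we read ``inside $\mathrm{supp}\,\rho$'' on this intrinsic scale. Under that reading, the eigenvalues outside the support are absorbed into the support after letting $\varepsilon\downarrow0$ along the overwhelming-probability events. This is exactly the form in which AEK state ``no outliers''. The final line of the lemma, that $H^{(n)}$ has no outlier eigenvalues, is then the reformulation of this conclusion.

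\textbf{Main obstacle.} The main obstacle is the edge region. The local law alone cannot decide on which side of $\beta_r$ an eigenvalue at distance $o(n^{-2/3})$ falls. Closing this requires the edge rigidity input from \cite{AEK2017}, together with the regularity of $\rho$ at the edges (square-root or cusp behaviour), which follows from (B) and (C). The plan is to quote that input rather than reprove it.
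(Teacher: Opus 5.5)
The paper gives no proof of this lemma; it only cites \cite{AEK2017}. Your derivation has a genuine gap, and at the edges it cannot be closed, because the statement as literally worded is false.

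\textbf{Counterexample.} Take $H^{(n)}$ to be a GOE matrix with off-diagonal variances $1/n$. It satisfies (A)--(D): primitivity holds with $L=1$, $\abs{m_k}\le 1$, and the moments are Gaussian. Here $\rho$ is the semicircle law on $[-2,2]$. Since $n^{2/3}(\lambda_{\max}-2)$ converges to the Tracy--Widom law, $\P(\lambda_{\max}>2)$ stays bounded away from zero, so $\P(E_n)\ge 1-n^{-D}$ fails.

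\textbf{Where your edge step breaks.} This is exactly your edge region. Edge eigenvalues genuinely fluctuate on both sides of $\beta_r$ on the scale $n^{-2/3}$, so no rigidity input can place them inside the support; rigidity only localizes them to within $n^{-2/3+\varepsilon}$. Reading ``inside $\mathrm{supp}\,\rho$'' on the intrinsic scale changes the statement rather than proving it. Letting $\varepsilon\downarrow 0$ along the overwhelming-probability events is also not a valid limit, since the thresholds $N_D$ depend on $\varepsilon$.

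\textbf{What is actually true.} The cited literature provides a version with a margin: for each fixed $\varepsilon>0$, with overwhelming probability no eigenvalue lies at distance $\ge\varepsilon$ from $\mathrm{supp}\,\rho$. Sharper edge local laws push the margin down to the natural edge scale, $n^{-2/3+\varepsilon}$ at square-root edges. This weaker statement is all the paper needs. In Proposition~\ref{prop:outlier-short} the lemma is used only to get $\norm{H}=O_{\P}(1)$, and your far-region norm bound already gives that on its own.

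\textbf{A second problem in the gap region.} Even for the margin version, your contradiction does not close as written. An eigenvalue within $\eta$ of $\tau$ contributes only $\frac{1}{2n\eta}$ to $\frac1n\Im\Tr G(z)$. Your error term $\frac{n^{\varepsilon/4}}{n\eta}$ always exceeds that, so no choice of $\eta$ produces a contradiction.

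You need the improved averaged law outside the spectrum, whose error is $o\bigl(\frac{1}{n\eta}\bigr)$ when $\eta\ll\kappa:=\mathrm{dist}(\tau,\mathrm{supp}\,\rho)$. Near a square-root edge such an error is of order $\frac{n^{\delta}}{n(\kappa+\eta)}+\frac{n^{\delta}}{(n\eta)^2\sqrt{\kappa+\eta}}$. Combine it with $\Im\langle m\rangle\lesssim\eta/\sqrt{\kappa}$ and a choice such as $\eta=n^{-1/2-\delta}\kappa^{1/4}$. For a fixed margin $\kappa\ge\varepsilon$ this is straightforward, because the QVE is stable away from $\mathrm{supp}\,\rho$. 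That argument yields exactly the corrected lemma.
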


\begin{prop}\label{prop:QVE-BA}
Let $Y = \left(M - \E[M]\right) / \sqrt{m}$ with $m = c n$ and fixed $c \in \left(0, \frac12\right)$, and let $\sigma^2 : \left[0, 1\right]^2 \to \R_+$ be the variance kernel in~\eqref{eq:sigma-piecewise}. Then, as $n \to \infty$, the empirical spectral distribution of $Y$ converges weakly in probability to the deterministic probability measure with density
\[
\rho \left(\tau\right)
\ = \
\frac{1}{\pi} \int_0^1 \Im m \left(x; \tau + i 0\right) \, d x,
\]
where $m \left(\cdot; z\right)$ is the unique solution of the continuum QVE
\[
-\frac{1}{m \left(x; z\right)}
\ = \
z + \int_0^1 \sigma^2 \left(x, y\right) m \left(y; z\right) \, d y,
\qquad x \in \left[0, 1\right],\ z \in \mathbb{H}.
\]
\end{prop}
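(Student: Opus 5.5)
The plan is to chain together the comparison results already established and then apply the AEK theorem in its continuous form to the independent Poisson matrix $H$.

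\textbf{Step 1: reduce $Y$ to $H$.} By Proposition~\ref{prop:deconditioning}, for each fixed $z\in\mathbb H$ we have $s_Y(z)-s_{\widetilde Y}(z)\to0$ in probability. By Proposition~\ref{prop:independent-poisson-comparison}, $s_{\widetilde Y}(z)-s_H(z)\to0$ in probability. The triangle inequality then gives $s_Y(z)-s_H(z)\to0$ in probability for every fixed $z\in\mathbb H$. Weak convergence in probability of empirical spectral measures is equivalent to pointwise convergence in probability of Stieltjes transforms on a countable dense subset of $\mathbb H$, once the limit is a probability measure. Since all the transforms are $1/\Im z$-Lipschitz-controlled, it therefore suffices to show that $s_H(z)$ converges in probability to the transform $\int_0^1 m(x;z)\,dx$ of the claimed measure.

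\textbf{Step 2: apply AEK to $H$.} The entries of $H$ are centered, independent up to symmetry, and have variance profile $s^{(n)}_{kj}=\bar\lambda_{kj}/m$. By Proposition~\ref{prop:check-sigma-conditions}, this profile satisfies (A)--(D) uniformly in $n$, so Proposition~\ref{prop:AEK} yields
\[
s_H(z)-\tfrac1n\sum_k m^{(n)}_k(z)\to0
\]
in probability, where $\mathbf m^{(n)}$ solves the discrete QVE.

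\textbf{Step 3: discrete-to-continuum passage (the main obstacle).} Proposition~\ref{prop:variance-kernel} and Theorem~\ref{thm:mean-kernel-limit} give
\[
n\,s^{(n)}_{kj}=\tfrac1c\E[M_{kj}]=\sigma^2(j/n,k/n)+o(1)
\]
pointwise, with a uniform $O(1)$ bound. The convergence is uniform on compacts away from the diagonal and the line $x=c$. Hence the step function $f_n(x,y):=n\,s^{(n)}_{\lceil xn\rceil\lceil yn\rceil}$ converges to $\sigma^2$ in $L^1([0,1]^2)$, by dominated convergence. I would then embed $\mathbf m^{(n)}$ as a step function $m_n(x;z)$, which solves the continuum QVE with kernel $f_n$. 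Next I would use the stability of the QVE solution with respect to the kernel: $|m_k|\le 1/\Im z$ trivially. Subtracting the two equations gives
\[
m_n-m=m_n\big(S_{f_n}m_n-S_{\sigma^2}m\big)m .
\]
For $\Im z$ large, say $\Im z>2\sqrt{\|\sigma^2\|_\infty+1}$, the map $w\mapsto -1/(z+Sw)$ is a contraction on $L^\infty$ functions bounded by $1/\Im z$. This yields
\[
\|m_n-m\|_{L^1}\lesssim \|f_n-\sigma^2\|_{L^1}\to0,
\]
and so $\int m_n\,dx\to\int m\,dx$ there. Both sides are analytic in $z$ and uniformly bounded on $\{\Im z>\eta\}$, so Vitali's theorem (or, equivalently, the fact that Stieltjes transforms agreeing on an open set determine the measure) extends the convergence to all of $\mathbb H$.

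\textbf{Step 4: identify the limit.} The limit $\int_0^1 m(x;z)\,dx$ is the Stieltjes transform of a probability measure, because $m(x;\cdot)$ is a Nevanlinna function with $z\,m(x;z)\to-1$. Stieltjes inversion then gives the density formula, with the boundary value $m(x;\tau+i0)$ existing by (C). Combined with Step 1, this proves the statement.

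The delicate point is Step 3. The kernel has a singular boundary structure: it is $0$ on $A\times A$, has a jump at $x=c$, and the uniform estimate of Theorem~\ref{thm:mean-kernel-limit} fails near $x=c$. For this reason I would rely only on $L^1$ convergence plus uniform boundedness, rather than sup-norm convergence of the profile.
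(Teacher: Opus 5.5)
Your proposal is correct and follows the paper's route: apply Proposition~\ref{prop:AEK} to the independent Poisson matrix $H$, whose profile satisfies (A)--(D) by Proposition~\ref{prop:check-sigma-conditions}, and transfer the limit to $Y$ via Propositions~\ref{prop:independent-poisson-comparison} and~\ref{prop:deconditioning}. The one real difference is your Step~3: the paper simply invokes the ``continuous version'' of Proposition~\ref{prop:AEK}, even though the profile is only $\frac{1}{n}\sigma^2(k/n,j/n)+o(1/n)$ (with the clique block and diagonal as exceptions) rather than an exact sampling of a kernel, whereas your $L^1$-stability argument (a contraction for large $\Im z$, extended to all of $\mathbb{H}$ by Vitali) actually justifies that passage.
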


\begin{proof}
Let $H$ be the independent Poisson matrix from Proposition \ref{prop:independent-poisson-comparison}. Its entries are centered, independent up to symmetry, and have variance profile $S^{(n)} = \left(s^{(n)}_{k j}\right)$ with
\[
s^{(n)}_{k j}
\ = \
\E[\abs{H_{k j}}^2]
\ = \
\frac{1}{n} \sigma^2 \left(\frac{k}{n}, \frac{j}{n}\right)
+
o \left(\frac{1}{n}\right),
\]
as shown in Proposition \ref{prop:variance-kernel}.

Thus the variance matrices $S^{(n)}$ satisfy assumptions (A)--(D) of \cite{AEK2017}. By Proposition \ref{prop:AEK}, the empirical spectral distribution of $H$ converges in probability to the deterministic measure whose density is
\[
\rho \left(\tau\right)
\ = \
\frac{1}{\pi} \int_0^1 \Im m \left(x; \tau + i 0\right) \, d x,
\]
where $m$ solves the continuum QVE with kernel $\sigma^2$.

By Proposition \ref{prop:independent-poisson-comparison}, the Stieltjes transforms of $H$ and $\widetilde Y$ differ by $o \left(1\right)$ in probability at each fixed $z \in \mathbb H$. Therefore $\widetilde Y$ has the same limiting spectral distribution as $H$.

Finally, by Proposition \ref{prop:deconditioning}, the Stieltjes transforms of $\widetilde Y$ and $Y$ differ by $o \left(1\right)$ in probability at each fixed $z \in \mathbb{H}$. Therefore $Y$ has the same limiting spectral distribution as $\widetilde Y$, and hence the same limiting spectral distribution as $H$.
\end{proof}

\subsection{Reduction of the continuum QVE}

Recall that the limiting variance kernel $\sigma^2$ from \eqref{eq:sigma-piecewise} is of the form
\[
\sigma^2 \left(x, y\right)
\ = \
\begin{cases}
0, & x, y \in A = \left[0, c\right],\\[4pt]
\dfrac{1}{\sqrt{c}} u \left(y\right), & x \in A,\ y \in B,\\[6pt]
\dfrac{1}{\sqrt{c}} u \left(x\right), & x \in B,\ y \in A,\\[6pt]
u \left(x\right) u \left(y\right), & x, y \in B,
\end{cases}
\qquad
u \left(t\right)
\ = \
\left(2 t - c\right)^{-1 / 2}.
\]
By Proposition $\ref{prop:QVE-BA}$, the continuum QVE associated with $\sigma^2$ is
\begin{equation}\label{eq:QVE-continuum}
-\frac{1}{m \left(x; z\right)}
\ = \
z + \int_0^1 \sigma^2 \left(x, y\right) m \left(y; z\right) \, d y,
\qquad x \in \left[0, 1\right],\ z \in \mathbb{H}.
\end{equation}
Since $\sigma^2$ is rank-two by Proposition $\ref{sigma-rank-two}$, the integral equation collapses to a closed system of two scalar quantities. We carry out this reduction explicitly.

\medskip
Define
\[
M_A \left(z\right)
\ := \
\int_0^c m \left(y; z\right) \, d y,
\qquad
L \left(z\right)
\ := \
\int_c^1 u \left(y\right) m \left(y; z\right) \, d y.
\]
These two scalar quantities capture all dependence of~\eqref{eq:QVE-continuum} on $m \left(\cdot; z\right)$, because $\sigma^2$ only couples $m$ through the functions $\mathbf{1}_A$ and $\mathbf{1}_B u$. We can now prove Theorem \ref{thm:bulk-spectrum}.

\begin{proof}
For $x \in A$, $\sigma^2 \left(x, y\right) = 0$ for $y \in A$ and $\sigma^2 \left(x, y\right) = \left(1 / \sqrt{c}\right) u \left(y\right)$ for $y \in B$. Hence
\[
\int_0^1 \sigma^2 \left(x, y\right) m \left(y; z\right) \, d y
\ = \
\frac{1}{\sqrt{c}} \int_c^1 u \left(y\right) m \left(y; z\right) \, d y
\ = \
\frac{L \left(z\right)}{\sqrt{c}}.
\]
Substituting into \eqref{eq:QVE-continuum} gives
\[
-\frac{1}{m_A \left(z\right)}
\ = \
z + \frac{L \left(z\right)}{\sqrt{c}},
\qquad
m_A \left(z\right)
\ = \
-\frac{1}{z + L \left(z\right) / \sqrt{c}}.
\]
Integrating over $A = \left[0, c\right]$ yields $M_A \left(z\right) = c m_A \left(z\right)$.

For $x \in B$,
\[
\sigma^2 \left(x, y\right)
\ = \
\begin{cases}
\dfrac{1}{\sqrt{c}} u \left(x\right), & y \in A,\\[4pt]
u \left(x\right) u \left(y\right), & y \in B,
\end{cases}
\]
so that
\[
\int_0^1 \sigma^2 \left(x, y\right) m \left(y; z\right) \, d y
\ = \
\frac{1}{\sqrt{c}} u \left(x\right) M_A \left(z\right) + u \left(x\right) L \left(z\right)
\ = \
u \left(x\right) \left(L \left(z\right) + \frac{M_A \left(z\right)}{\sqrt{c}}\right).
\]
The QVE becomes
\[
-\frac{1}{m \left(x; z\right)}
\ = \
z + u \left(x\right) \left(L \left(z\right) + \frac{M_A \left(z\right)}{\sqrt{c}}\right),
\]
hence
\[
m \left(x; z\right)
\ = \
-\frac{1}{z + u \left(x\right) K \left(z\right)},
\qquad
K \left(z\right)
:=
L \left(z\right) + \frac{M_A \left(z\right)}{\sqrt{c}}.
\]

Using $M_A \left(z\right) = c m_A \left(z\right) = -c / \left(z + L \left(z\right) / \sqrt{c}\right)$, we obtain
\[
K \left(z\right)
\ = \
L \left(z\right) + \frac{c}{\sqrt{c}} \left(-\frac{1}{z + L \left(z\right) / \sqrt{c}}\right)
\ = \
L \left(z\right) - \frac{\sqrt{c}}{z + L \left(z\right) / \sqrt{c}}.
\]

By definition of $L$,
\[
L \left(z\right)
\ = \
\int_c^1 u \left(y\right) m \left(y; z\right) \, d y
\ = \
-\int_c^1 \frac{u \left(y\right)}{z + u \left(y\right) K \left(z\right)} \, d y.
\]
This proves the reduced system \eqref{eq:QVE-system}.
\end{proof}

\medskip
The system~\eqref{eq:QVE-system} is a pair of equations that implicitly defines the pair $\left(L \left(z\right), K \left(z\right)\right)$, which uniquely determines the solution $m \left(\cdot; z\right)$ of the continuum QVE. In particular, once $L \left(z\right)$ and $K \left(z\right)$ are known, the Stieltjes transform of the limiting eigenvalue distribution is
\[
G_c \left(z\right)
\ = \
\int_0^1 m \left(x; z\right) \, d x
\ = \
c m_A \left(z\right) + \int_c^1 -\frac{1}{z + u \left(x\right) K \left(z\right)} \, d x.
\]
Stieltjes inversion then yields the limiting spectral density.

\section{The outlier eigenvalue}

In this section we describe the leading eigenvalue of the adjacency matrix.

\begin{prop}[Weyl's perturbation theorem; {\cite[III.2.6]{BhatiaMatrixAnalysis}}]\label{prop:weyl}
Let $A$ and $B$ be Hermitian $n \times n$ matrices, and let $\lambda_1 \left(\cdot\right) \ge \cdots \ge \lambda_n \left(\cdot\right)$ denote their eigenvalues. Then
\[
\max_{1 \le j \le n}
\abs{\lambda_j \left(A\right) - \lambda_j \left(B\right)}
\ \le \
\norm{A - B}.
\]
\end{prop}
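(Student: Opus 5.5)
The plan is to derive the statement from the Courant--Fischer min--max characterization of eigenvalues of Hermitian matrices, which turns the problem into a pointwise comparison of quadratic forms. For a Hermitian $n \times n$ matrix $X$ with eigenvalues $\lambda_1(X) \ge \cdots \ge \lambda_n(X)$, I would first recall (or prove via the spectral theorem and a dimension-counting argument) that
\[
\lambda_j(X) \ = \ \max_{\substack{V \subseteq \C^n \\ \dim V = j}} \ \min_{\substack{x \in V \\ \norm{x} = 1}} x^* X x .
\]
The proof of this identity is the only nontrivial ingredient. For the lower bound, take $V$ to be the span of the top $j$ eigenvectors. For the upper bound, note that any $j$-dimensional $V$ intersects the span of the eigenvectors for $\lambda_j, \dots, \lambda_n$, which has dimension $n-j+1$, nontrivially; a unit vector $x$ in that intersection has $x^* X x \le \lambda_j(X)$.

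Next, set $E := A - B$, which is Hermitian, and let $\varepsilon := \norm{E}$ denote the operator norm. For every unit vector $x$, Cauchy--Schwarz gives $\abs{x^* E x} \le \norm{x}\,\norm{Ex} \le \varepsilon$. Hence, pointwise on the unit sphere,
\[
x^* B x - \varepsilon \ \le \ x^* A x \ \le \ x^* B x + \varepsilon .
\]

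Since adding a constant commutes with both the inner minimum and the outer maximum in the min--max formula, and both operations are monotone, applying the formula to $A$ and $B$ yields
\[
\lambda_j(B) - \varepsilon \ \le \ \lambda_j(A) \ \le \ \lambda_j(B) + \varepsilon
\]
for every $j$. Taking the maximum over $j$ gives the claim. By the symmetry of the argument in $A$ and $B$, no separate case analysis is needed.

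I do not expect a real obstacle here. The only step requiring care is the dimension-intersection argument in the min--max principle, specifically $\dim V + (n-j+1) > n$, which forces a nonzero intersection. Since this is textbook material (as the citation to Bhatia indicates), in the paper I would simply invoke Courant--Fischer and present the two-line comparison above.
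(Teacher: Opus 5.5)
Your proof via Courant--Fischer and the comparison $\abs{x^* E x} \le \norm{E}$ on the unit sphere is correct. The paper gives no proof of its own and simply cites Bhatia, where the result is derived from the minimax principle through Weyl's inequalities $\lambda_j(B)+\lambda_n(E) \le \lambda_j(B+E) \le \lambda_j(B)+\lambda_1(E)$, so your argument is essentially the standard one.
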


\subsection{Top eigenvalue of $\E[M]$}

Recall from Lemma \ref{lem:EM-rank1} that the limiting mean kernel satisfies $F \left(x, y\right) = \phi \left(x\right) \phi \left(y\right)$, where
\[
\phi \left(x\right)
\ = \
\begin{cases}
1, & x \in \left[0, c\right],\\[4pt]
\sqrt{c} \left(2 x - c\right)^{-1 / 2}, & x \in \left(c, 1\right].
\end{cases}
\]
Define the vector $v^{(n)} \in \R^n$ by
\[
v^{(n)}_i
:= 
\phi \left(\frac{i}{n}\right),
\qquad 1 \le i \le n.
\]

\begin{lemma}\label{lem:top-eig-EM}
Let $\lambda_1 \left(\E[M]\right) \ge \cdots \ge \lambda_n \left(\E[M]\right)$ be the eigenvalues of $\E[M]$. Then
\[
\lambda_1 \left(\E[M]\right)
\ = \
\alpha \left(c\right) n + O \left(1\right),
\]
where
\begin{equation}\label{eq:alpha-def}
\alpha \left(c\right)
:= 
\int_0^1 \phi \left(x\right)^2 \, d x
\ = \
c + \frac{c}{2} \log \frac{2 - c}{c}.
\end{equation}
\end{lemma}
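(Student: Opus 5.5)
Our approach is to compare $\E[M]$ with the rank-one matrix $F^{(n)} = v^{(n)}(v^{(n)})^\top$ from Lemma~\ref{lem:EM-rank1} and then apply Weyl's perturbation theorem (Proposition~\ref{prop:weyl}). The top eigenvalue of $F^{(n)}$ is exactly $\norm{v^{(n)}}^2 = \sum_{i=1}^n \phi(i/n)^2$, and the other eigenvalues are zero. The argument therefore has three parts:
\begin{enumerate}
\item show that $\sum_i \phi(i/n)^2 = n\int_0^1\phi^2 + O(1)$;
\item evaluate the integral to obtain $\alpha(c)$;
\item show that $\norm{\E[M]-F^{(n)}} = O(1)$ in operator norm.
\end{enumerate}

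\textbf{Riemann sum and integral.} The function $\phi^2$ equals $1$ on $[0,c]$ and $c/(2x-c)$ on $(c,1]$. On $(c,1]$ it is monotone decreasing and bounded by $1$, since $2x-c\ge c$. A monotone bounded function has Riemann sum error $O(1)$, and the single jump at $x=c$ contributes $O(1)$ more. The integral is
\[
c + c\int_c^1 \frac{dx}{2x-c} = c+\frac{c}{2}\log\frac{2-c}{c},
\]
which is $\alpha(c)$.

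\textbf{Operator-norm comparison.} Here we use the entrywise expansion from the proof of Theorem~\ref{thm:mean-kernel-limit}, which gives $\E[M_{kj}] = F(k/n,j/n)+O(n^{-1})$. We need this error term to be uniform over all $k,j$ (apart from the issues listed below). In the exact formula, $z_0$ and $z_1$ are always at least of order $cn/2$, so the Gamma-ratio asymptotics hold uniformly. The $O(1)$ shifts coming from $s_0$, $t_0=m+1$ and $k-1$ versus $k$ then change the kernel value by $O(1/n)$, because $\phi$ is Lipschitz on $B$ and on $A$ separately.

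There are three exceptional sets of entries, to be handled separately:
\begin{itemize}
\item the diagonal, where $\E[M_{kk}]=0$ but $F^{(n)}_{kk}=\phi(k/n)^2\le 1$;
\item the $O(1)$-wide boundary strip near index $t_0$, where $\mathbf 1_A$ versus $\mathbf 1_B$ may be misassigned;
\item the entries with $k,j\le t_0$, which are exactly $1$ off the diagonal and therefore agree with $F$.
\end{itemize}

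The diagonal discrepancy is a diagonal matrix with entries at most $1$, so its norm is at most $1$. The boundary strip consists of $O(1)$ rows and columns with bounded entries. Their norm is bounded by the Frobenius norm, which is $O(\sqrt{n})$; a better bound comes from viewing them as a rank-$O(1)$ matrix with entries $O(1)$ on $O(1)$ rows. For the generic entries the error matrix has entries $O(1/n)$, so its Frobenius norm is $O(1)$.

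\textbf{Main obstacle.} The hardest step is the boundary strip. A crude Frobenius bound there gives only $O(\sqrt n)$, which would yield $\lambda_1(\E[M])=\alpha n+O(\sqrt n)$ rather than $O(1)$. To get $O(1)$, the first thing to try is to check directly from \eqref{eq:E-Mij-discrete} that no misassignment actually occurs. Since $\phi$ is continuous at $x=c$ (it takes the value $1$ on both sides), the kernel values across the boundary agree up to $O(1/n)$ anyway. If so, the strip entries are also $O(1/n)$ errors and no separate bound is needed.

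With this, $\norm{\E[M]-F^{(n)}}_F=O(1)$, and Weyl's theorem gives $\lambda_1(\E[M])=\norm{v^{(n)}}^2+O(1)=\alpha(c)n+O(1)$.
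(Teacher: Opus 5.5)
Your proposal is correct and follows the paper's proof: rank-one comparison with $v^{(n)}(v^{(n)})^\top$, Weyl's theorem, and a Riemann sum for $\norm{v^{(n)}}_2^2$. You are also more careful than the paper on two points: uniformity of the $O(1/n)$ entrywise error, and the diagonal, where $\E[M_{kk}]=0$ but $\phi(k/n)^2\ge c/(2-c)$, so the error matrix has Frobenius norm of order $\sqrt n$ and only your operator-norm bound (at most $1$) for the diagonal part works. Accordingly, your closing claim $\norm{\E[M]-F^{(n)}}_F=O(1)$, which repeats the paper's slip, should read $\norm{\E[M]-F^{(n)}}=O(1)$, and you can drop the boundary-strip hedging outright because $\phi(x)=\sqrt{c/(2\max\{x,c\}-c)}$ is globally Lipschitz (your ``rank-$O(1)$'' alternative for the strip would not have beaten $O(\sqrt n)$ anyway).
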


\begin{proof}
By Theorem \ref{thm:mean-kernel-limit}, for each fixed $k, j$,
\[
\E[M_{k j}]
\ = \
F \left(\frac{k}{n}, \frac{j}{n}\right) + O \left(\frac{1}{n}\right)
\ = \
v^{(n)}_k v^{(n)}_j + O \left(\frac{1}{n}\right).
\]
Set
\[
R^{(n)}_{k j}
:= 
\E[M_{k j}] - v^{(n)}_k v^{(n)}_j.
\]
Then
\[
\norm{R^{(n)}}_F^2
\ = \
\sum_{k, j = 1}^n O \left(\frac{1}{n^2}\right)
\ = \
O \left(1\right),
\]
hence $\norm{R^{(n)}} \le \norm{R^{(n)}}_F = O \left(1\right)$.

Write
\[
\E[M]
\ = \
v^{(n)} \left(v^{(n)}\right)^\top\! + R^{(n)}.
\]
The rank--one matrix $v^{(n)} \left(v^{(n)}\right)^\top$ has eigenvalues $\norm{v^{(n)}}_2^2$ and $0$ with multiplicity $n - 1$. Applying Proposition \ref{prop:weyl} results in
\[
\abs{\lambda_1 \left(\E[M]\right) - \norm{v^{(n)}}_2^2}
\ \le \
\norm{\E[M] - v^{(n)} \left(v^{(n)}\right)^\top}
\ = \
\norm{R^{(n)}}
\ = \
O \left(1\right).
\]
Thus
\begin{equation}\label{eq:lambda1-EM-vn}
\lambda_1 \left(\E[M]\right)
\ = \
\norm{v^{(n)}}_2^2 + O \left(1\right).
\end{equation}

Next we compute $\norm{v^{(n)}}_2^2$. By definition,
\[
\norm{v^{(n)}}_2^2
\ = \
\sum_{k = 1}^n \phi \left(\frac{k}{n}\right)^2
\ = \
\sum_{k = 1}^{c n} 1
+
\sum_{k = c n + 1}^{n} \frac{c}{\frac{2 k}{n} - c}.
\]
The first sum equals $c n + O \left(1\right)$. For the second, write it as a Riemann sum:
\[
\sum_{k = c n + 1}^{n} \frac{c}{\frac{2 k}{n} - c}
\ = \
n \int_c^1 \frac{c}{2 x - c} \, d x + O \left(1\right)
\ = \
n \cdot \frac{c}{2} \log \frac{2 - c}{c} + O \left(1\right).
\]
Therefore
\[
\norm{v^{(n)}}_2^2
\ = \
n \left(c + \frac{c}{2} \log \frac{2 - c}{c}\right) + O \left(1\right)
\ = \
\alpha \left(c\right) n + O \left(1\right).
\]

Substituting into \eqref{eq:lambda1-EM-vn} yields
\[
\lambda_1 \left(\E[M]\right)
\ = \
\alpha \left(c\right) n + O \left(1\right),
\]
as claimed.
\end{proof}

\subsection{Outlier eigenvalue of $M$}

\begin{prop}[Asymptotic outlier eigenvalue]\label{prop:outlier-short}
Let $M$ be the adjacency matrix of the B--A multigraph with $m = c n$ and fixed $c \in \left(0, \frac12\right)$. Then
\[
\lambda_{\max} \left(M\right)
\ = \
\alpha \left(c\right) n + o_{\P} \left(n\right),
\]
where $\alpha \left(c\right)$ is given by~\eqref{eq:alpha-def}.
\end{prop}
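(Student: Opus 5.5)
The plan is to write $M = \E[M] + \sqrt{m}\,Y$ and apply Weyl's perturbation theorem (Proposition \ref{prop:weyl}):
\[
\abs{\lambda_{\max}(M) - \lambda_1(\E[M])} \ \le \ \sqrt{m}\,\norm{Y}.
\]
Lemma \ref{lem:top-eig-EM} already gives $\lambda_1(\E[M]) = \alpha(c) n + O(1)$. It then suffices to show that $\norm{Y} = o_{\P}(\sqrt{n})$, since $\sqrt{m} = \sqrt{c n}$. In fact we expect $\norm{Y} = O_{\P}(1)$, which is far more than needed. A crude bound is available at once but just misses: $\norm{Y} \le \norm{Y}_F$ with $\E\norm{Y}_F^2 = \sum_{k,j}\Var(M_{kj})/m = O(n^2/m) = O(n)$. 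This gives only $\norm{Y} = O_{\P}(\sqrt n)$ and hence an error $O_{\P}(n)$. So a genuine operator-norm bound is required.

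To get it, I would go through the same chain of comparisons used for the bulk, but now in operator norm rather than in Stieltjes transforms. The target is
\[
\norm{Y} \ \le \ \norm{H} + \norm{\widehat Y - H} + \norm{\,\overline Y - \widetilde Y\,},
\]
where the matrices are coupled as in Propositions \ref{prop:deconditioning} and \ref{prop:independent-poisson-comparison}. Because the law of $\lambda_{\max}$ depends only on the law of the matrix, it is enough to work with these coupled copies.

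\textbf{Independent Poisson matrix.} For $H$, Proposition \ref{prop:check-sigma-conditions} gives assumptions (A)--(D). Lemma \ref{lem:AEK-no-outliers} then places all eigenvalues of $H$ inside the compact support of $\rho$ with overwhelming probability, so $\norm{H} = O_{\P}(1)$. As a fallback that avoids the AEK machinery, I would use a standard bound for independent centered entries: $\E\norm{H} \lesssim \max_k \sqrt{\sum_j s_{kj}} + \max_{kj}\norm{H_{kj}}_\infty$-type terms. Poisson tails with bounded means give maximal entries of order $\log n/\sqrt{m}$, so $\norm{H} = O_{\P}(1)$ again.

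\textbf{Error matrices.} The two comparison errors only need to be bounded by Frobenius norms. Those proofs give $\E\norm{\widehat Y - H}_F^2 = O(\sqrt n)$ and $\E\norm{\Delta}_F^2 = O(\sqrt n)$. Hence each is $O_{\P}(n^{1/4}) = o_{\P}(\sqrt n)$, which is exactly the room we have. Combining the three pieces gives $\norm{Y} = o_{\P}(\sqrt n)$, so $\sqrt m\,\norm{Y} = o_{\P}(n)$, and the proposition follows. Theorem \ref{thm:top-eigenvalue} is then obtained by dividing by $\sqrt m = \sqrt{c}\sqrt n$.

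\textbf{Main obstacle.} The delicate step is the deconditioning comparison. In Proposition \ref{prop:deconditioning}, $\overline M$ is defined only in distribution and the ``edit'' construction is coupled per row, so one must check that it gives a bona fide coupling for the whole matrix. Symmetrization also needs care, because an edit at $(k,j)$ touches row $j$ as well. If this becomes awkward, I would bypass it with a direct concentration argument for the quadratic form. Writing $\hat v = v^{(n)}/\norm{v^{(n)}}$, one has $\lambda_{\max}(M) \ge \hat v^\top M \hat v$, and $\Var(\hat v^\top M \hat v) = o(n^2)$ follows from the $O(n)$ degree variances and the martingale structure of the row increments. Combined with $\E[\hat v^\top M \hat v] = \alpha(c)n + O(1)$, this yields the lower bound. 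The upper bound would then still come from the Frobenius/operator split above.
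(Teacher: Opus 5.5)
Your proposal is correct and follows the paper's proof essentially step for step: Weyl's inequality against $\E[M]$ with Lemma~\ref{lem:top-eig-EM}, then $\norm{Y} = o_{\P}(\sqrt n)$ obtained from $\norm{H} = O_{\P}(1)$ (via Lemma~\ref{lem:AEK-no-outliers}) plus the $O_{\P}(n^{1/4})$ Frobenius errors of the two comparison couplings. The coupling issue you flag affects the paper's proof as well, since it chains $\overline Y \approx \widetilde Y$ with $\widehat Y \approx H$ even though $\widehat Y \stackrel{d}{=} \widetilde Y$ only in law; the cleanest repair is to build $M$, its Poissonization and the independent Poisson matrix jointly row by row, each row coupled conditionally on the past.
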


\begin{proof}
Set
\[
Y
\ := \
\frac{M - \E[M]}{\sqrt{m}},
\qquad m
\ = \
c n.
\]
By Proposition \ref{prop:deconditioning}, there exists a matrix $\overline Y$ such that $\overline Y \stackrel{d}{=} Y$
and
\[
\frac{\norm{\overline Y - \widetilde Y}_F}{\sqrt n}
\ \to \
0
\]
in probability. By Proposition \ref{prop:independent-poisson-comparison}, there is a matrix $\widehat Y$ such that $\widehat Y \stackrel{d}{=} \widetilde Y$
and
\[
\frac{\norm{\widehat Y - H}_F}{\sqrt n}
\ \to \
0
\]
in probability.

Since $\widehat Y \stackrel{d}{=} \widetilde Y$,
\[
\frac{\norm{\overline Y - H}_F}{\sqrt n}
\ \le \
\frac{\norm{\overline Y - \widehat Y}_F}{\sqrt n}
+
\frac{\norm{\widehat Y - H}_F}{\sqrt n}
\ \to \
0
\]
in probability. Hence
\[
\norm{\overline Y - H}
\ \le \
\norm{\overline Y - H}_F
\ = \
o_{\P}\left(\sqrt n\right).
\]

By Lemma \ref{lem:AEK-no-outliers}, applied to the independent matrix $H$, the limiting measure obtained from the QVE is supported on a finite union of compact intervals, hence has compact support. Write
\[
\operatorname{supp}\mu
\ \subset \
\left[-C,C\right]
\]
for some constant $C<\infty$.

Since $H$ is a real symmetric matrix, all of its eigenvalues are real, and
\[
\norm{H}
\ = \
\max_{1\le i\le n}\abs{\lambda_i\left(H\right)}.
\]
By Lemma \ref{lem:AEK-no-outliers}, asymptotically with overwhelming probability all eigenvalues of $H$ lie in $\operatorname{supp}\mu$. Hence, asymptotically with overwhelming probability,
\[
\lambda_i\left(H\right)
\in
\left[-C,C\right]
\]
for all $i=1,\dots,n$. Then
\[
\P\left[
\norm{H}>C
\right]
\ = \
\P\left[
\max_{1\le i\le n}\abs{\lambda_i\left(H\right)}>C
\right]
\ \to \
0
\]
and
\[
\norm{H}
\ = \
O_{\P}\left(1\right).
\]
It follows that
\[
\norm{\overline Y}
\ \le \
\norm{H}+\norm{\overline Y-H}
\ = \
O_{\P}\left(1\right)+o_{\P}\left(\sqrt n\right)
\ = \
o_{\P}\left(\sqrt n\right).
\]
Since $\overline Y \stackrel{d}{=} Y$, we also have
\[
\norm{Y}
\ = \
o_{\P}\left(\sqrt n\right).
\]
Therefore
\[
\norm{M-\E[M]}
\ = \
\sqrt m \norm{Y}
\ = \
O\left(\sqrt n\right)o_{\P}\left(\sqrt n\right)
\ = \
o_{\P}\left(n\right).
\]

Now apply Proposition \ref{prop:weyl}. Since both $M$ and $\E[M]$ are Hermitian,
\[
\abs{\lambda_{\max} \left(M\right) - \lambda_1 \left(\E[M]\right)}
\ \le \
\norm{M - \E[M]}
\ = \
o_{\P} \left(n\right).
\]
By Lemma \ref{lem:top-eig-EM},
\[
\lambda_1 \left(\E[M]\right)
\ = \
\alpha \left(c\right) n + O \left(1\right).
\]
Combining the last two expressions gives
\[
\lambda_{\max} \left(M\right)
\ = \
\alpha \left(c\right) n + O \left(1\right) + o_{\P} \left(n\right).
\]
Finally, since $O \left(1\right) = o \left(n\right)$ deterministically, it follows that
\[
O \left(1\right) + o_{\P} \left(n\right)
\ = \
o_{\P} \left(n\right).
\]
Hence
\[
\lambda_{\max} \left(M\right)
\ = \
\alpha \left(c\right) n + o_{\P} \left(n\right).
\]
\end{proof}

Theorem \ref{thm:top-eigenvalue} is a corollary of Proposition \ref{prop:outlier-short}.

\bibliographystyle{plain}

\bibliography{bib}

\newpage

\appendix

\section{QVE conditions verification}\label{app:QVE-conditions}

We verify (A)-(D) in order.
\\
\smallskip

\textbf{(A) Flatness.}
First note that diagonal entries and entries inside the initial clique are deterministic after centering, so their variances are zero. Thus it remains to consider entries with $k > t_0$ and $j < k$; by symmetry this covers all off-diagonal entries.

For the independent Poisson matrix $H$ from Proposition \ref{prop:independent-poisson-comparison}, we have
\[
H_{k j}
\ = \
\frac{P_{k j} - \bar \lambda_{k j}}{\sqrt m},
\qquad
P_{k j}
\ \sim \
\mathrm{Poisson}\left(\bar \lambda_{k j}\right),
\qquad
\bar \lambda_{k j}
\ = \
\E\left[M_{k j}\right].
\]
Hence
\[
s^{(n)}_{k j}
\ := \
\E\left[\abs{H_{k j}}^2\right]
\ = \
\Var \left(H_{k j}\right)
\ = \
\frac{1}{m}\Var \left(P_{k j}\right)
\ = \
\frac{\bar \lambda_{k j}}{m}
\ = \
\frac{\E\left[M_{k j}\right]}{m}.
\]
By Theorem \ref{thm:mean-kernel-limit},
\[
\E\left[M_{k j}\right]
\ = \
O\left(1\right)
\]
uniformly in $j,k$. 
Since $m = c n$, it follows that
\[
s^{(n)}_{k j}
\ = \
\frac{\E\left[M_{k j}\right]}{m}
\ = \
O\left(\frac{1}{n}\right)
\]
uniformly in $j,k$. Hence, for all $k,j$,
\[
0
\ \le \
s^{(n)}_{k j}
\ \le \
\frac{C}{n}
\]
for some constant $C$ independent of $n$.
\\
\smallskip

\textbf{(B) Uniform primitivity.}
Fix any $\delta \in \left(0, 1 - c\right)$ and set
\[
J
\ := \
\left[c + \delta, 1\right],
\qquad
\mathcal J_n
\ := \
\left\{\ell \in \left\{1, \dots, n\right\} : \frac{\ell}{n} \in J\right\}.
\]
Then $\abs{\mathcal J_n} \ge n\abs{J}/2$ for all sufficiently large $n$.

We claim that there is a constant $a>0$, independent of $n$, such that
\[
s^{(n)}_{i\ell}
\ \ge \
\frac{a}{n}
\]
for all sufficiently large $n$, all $i$, and all $\ell \in \mathcal J_n$ with $i \ne \ell$.

Let $r=\max\left\{i,\ell\right\}$ and $q=\min\left\{i,\ell\right\}$.
Since $\ell/n \in J \subset \left(c,1\right]$, we have $r>t_0$ for all sufficiently large $n$.
For $i,\ell$, the corresponding independent Poisson entry in $H$ has variance
\[
s^{(n)}_{i\ell}
\ = \
\Var\left(H_{i\ell}\right)
\ = \
\frac{\E\left[M_{rq}\right]}{m}.
\]
Since
\[
\E\left[M_{rq}\right]
\ = \
m\E\left[p_{rq}\right],
\qquad
p_{rq}
\ = \
\frac{d_q \left(r - 1\right)}{D_{r - 1}},
\]
we have
\[
s^{(n)}_{i\ell}
\ = \
\E\left[p_{rq}\right].
\]
Every existing vertex has degree at least $m$, so $d_q \left(r - 1\right) \ge m$.
Also $D_{r - 1} \le D_n \le 2mn$. Hence
\[
p_{rq}
\ \ge \
\frac{m}{D_n}
\ \ge \
\frac{1}{2n}.
\]
Therefore
\[
s^{(n)}_{i\ell}
\ = \
\E\left[p_{rq}\right]
\ \ge \
\frac{1}{2n}.
\]
So the claim holds with $a=1/2$.

Therefore, for $L = 2$,
\[
\left(S^{(n)}\right)^2_{i j}
\ = \
\sum_{\ell = 1}^n s^{(n)}_{i \ell} s^{(n)}_{\ell j}
\ \ge \
\sum_{\substack{\ell \in \mathcal J_n\\ \ell \ne i,j}}
s^{(n)}_{i \ell} s^{(n)}_{\ell j}.
\]
For all sufficiently large $n$,
\[
\abs{\left\{\ell \in \mathcal J_n : \ell \ne i,j\right\}}
\ \ge \
\frac{n\abs{J}}{4}.
\]
Hence
\[
\left(S^{(n)}\right)^2_{i j}
\ \ge \
\frac{n\abs{J}}{4} \cdot \frac{1}{4n^2}
\ = \
\frac{\abs{J}}{16n}
\qquad\forall\, i,j
\]
for all sufficiently large $n$. Thus (B) holds with $L = 2$ and $p = \abs{J} / 16$.
\\
\smallskip

\textbf{(C) Bounded solution of the QVE.}
Since boundedness can be checked only after solving for $m \left(\cdot, z\right)$, another approach is necessary. By Remark 1.1 of \cite{AEK2017}, condition (C) is equivalent to the conditions of Theorem 6.1 of \cite{AEK2019}, namely
\\
\smallskip

\textbf{A1 (Symmetry and positivity preserving).}
Note that $S$ is an integral operator with a symmetric, non-negative kernel $S_{x y}$, i.e.,
\[
S_{x y}
\ = \
S_{y x}
\ \ge \
0.
\]
Equivalently,
\[
\left\langle u, S w\right\rangle
\ = \
\left\langle S u, w\right\rangle,
\qquad
\left(S p\right) \left(x\right)
\ \ge \
0
\]
whenever
\[
p \left(y\right) \ge 0.
\]
\\
\smallskip

\textbf{A2 (Smoothing / $L^2 \to L^\infty$ boundedness).}
The operator $S$ acts by
\[
\left(S w\right) \left(x\right)
\ = \
\int_0^1 S_{x y} w \left(y\right) \, d y,
\]
and for each fixed $x$ the function $y \mapsto S_{x y}$ lies in $L^2$. To see the concrete form of this condition, apply Cauchy-Schwarz at every $x$:
\[
\abs{\left(S w\right) \left(x\right)}
\ = \
\abs{\int_0^1 S_{x y} w \left(y\right) \, d y}
\ \le \
\left(\int_0^1 S_{x y}^{\,2} \, d y\right)^{1 / 2} \norm{w}_{L^2}.
\]
Taking the supremum over $x$ yields
\[
\norm{S w}_{L^\infty}
\ \le \
\left(\sup_{x \in \left[0, 1\right]}
      \left(\int_0^1 S_{x y}^{\,2} d y\right)^{1 / 2}\right) \norm{w}_{L^2}.
\]
Thus the requirement that $S$ be a bounded operator $L^2 \to L^\infty$ is equivalent to demanding
\[
\sup_{x \in \left[0, 1\right]}
\left(\int_0^1 S_{x y}^{\,2} \, d y\right)^{1 / 2}
\ < \
\infty.
\]
\\
\smallskip

\textbf{B1 (Quantitative block fully indecomposability).}
There exist constants $\phi > 0$ and $K \in \mathbb N$, a fully
indecomposable matrix $Z = \left(Z_{k j}\right)_{k, j = 1}^K$ with $Z_{k j} \in \left\{0, 1\right\}$, and a measurable partition $\mathcal I := \left\{I_j\right\}_{j = 1}^K$ of $\left[0, 1\right]$ such that for every $1 \le i, j \le K$ the following hold:
\[
\abs{I_j}
\ = \
\frac{1}{K},
\qquad
\sigma^2 \left(x, y\right)
\ \ge \
\phi Z_{i j}
\]
whenever
\[
\left(x, y\right) \in I_i \times I_j.
\]

\medskip

Note that the original condition (C) is framed in terms of a real-valued matrix, while the alternative conditions A1, A2, and B1 apply to a possibly infinite-dimensional operator. We then write $S^{(n)}$ using the conventions of \cite{AEK2019}, starting with defining
\[
X_n
\ := \
\{1,\dots,n\},
\qquad
\pi_n(\{j\})
\ := \
\frac{1}{n},
\qquad
B_n
\ := \
\ell^\infty(X_n),
\qquad
\|w\|_{B_n}
\ := \
\max_{1\le j\le n}|w_j|,
\]
and
\[
\langle u,w\rangle_n
\ := \
\frac{1}{n}\sum_{j=1}^n u_j \overline{w_j},
\qquad
\langle w\rangle_n
\ := \
\frac{1}{n}\sum_{j=1}^n w_j.
\]

The operator $S$ in the notation of \cite{AEK2019} is the operator induced by
the variance matrix $S^{(n)}=(s^{(n)}_{kj})$, namely
\[
(Sw)_k
\ := \
\sum_{j=1}^n s^{(n)}_{k j} w_j,
\qquad
1\le k\le n.
\]
Equivalently,
\[
S_{k j}
\ := \
n s^{(n)}_{k j}.
\]

\textbf{A1.}
Since $S^{(n)}$ is a variance matrix,
\[
s^{(n)}_{k j}
\ = \
s^{(n)}_{j k}
\ \ge \
0.
\]
Therefore, for $u,w \in B_n$,
\[
\left\langle u, S w\right\rangle_n
\ = \
\frac{1}{n}\sum_{k=1}^n u_k \sum_{j=1}^n s^{(n)}_{k j} w_j
\ = \
\frac{1}{n}\sum_{j=1}^n w_j \sum_{k=1}^n s^{(n)}_{j k} u_k
\ = \
\left\langle S u, w\right\rangle_n.
\]
Thus $S$ is symmetric. If $p_j \ge 0$ for all $j$, then
\[
(Sp)_k
\ = \
\sum_{j=1}^n s^{(n)}_{k j} p_j
\ \ge \
0
\]
for all $k$. Hence A1 holds uniformly in $n$.
\\
\smallskip

\textbf{A2.}
In the finite setting, A2 requires
\[
\sup_{1 \le k \le n}
\left(
\frac{1}{n}\sum_{j=1}^n S_{k j}^2
\right)^{1/2}
\ < \
\infty
\]
uniformly in $n$, with $S_{k j} = n s^{(n)}_{k j}$.
So it is enough to show
\[
\sup_{1 \le k \le n}
\left(
n\sum_{j=1}^n \left(s^{(n)}_{k j}\right)^2
\right)^{1/2}
\ < \
\infty.
\]

By (A), there is a constant $C_1>0$, independent of $n$, such that
\[
0
\ \le \
s^{(n)}_{k j}
\ \le \
\frac{C_1}{n}
\]
for all $k,j$. Therefore
\[
n\sum_{j=1}^n \left(s^{(n)}_{k j}\right)^2
\ \le \
n\sum_{j=1}^n \frac{C_1^2}{n^2}
\ = \
C_1^2.
\]
Hence
\[
\sup_{1 \le k \le n}
\left(
n\sum_{j=1}^n \left(s^{(n)}_{k j}\right)^2
\right)^{1/2}
\ \le \
C_1.
\]
Thus A2 holds uniformly in $n$.
\\
\smallskip

\textbf{B1.}
Set $A = \left[0, c\right]$, $B = \left(c, 1\right]$, and $u \left(t\right) = \left(2 t - c\right)^{-1 / 2}$. Since $c < 1/2$, fix $\delta$ such that
\[
0
\ < \
\delta
\ < \
\frac{1}{2} - c,
\]
and let
\[
B_\delta
\ := \
\left(c + \delta, 1\right]
\subset
B.
\]
Then
\[
\abs{B_\delta}
\ = \
1 - c - \delta
\ > \
\frac{1}{2}.
\]
Choose a sufficiently large $K \in \mathbb N$ and define
\[
r
\ := \
\left\lfloor K \abs{B_\delta} \right\rfloor
\]
such that
\[
\frac{K}{2}
\ < \
r
\ \le \
K \abs{B_\delta}.
\]
Now choose disjoint measurable sets
\[
I_{K-r+1}, \dots, I_K
\subset
B_\delta
\]
with
\[
\abs{I_j}
\ = \
\frac{1}{K},
\qquad
K-r+1 \le j \le K.
\]
Partition the complement
\[
\left[0, 1\right] \setminus \bigcup_{j = K-r+1}^K I_j
\]
arbitrarily into measurable sets $I_1, \dots, I_{K-r}$ of measure $1/K$ each. Thus $\left\{I_j\right\}_{j = 1}^K$ is a measurable partition of $\left[0, 1\right]$ with
\[
\abs{I_j}
\ = \
\frac{1}{K}
\]
for all $j$.

Let
\[
P
\ := \
\left\{K-r+1, \dots, K\right\}.
\]
Define $Z \in \left\{0, 1\right\}^{K \times K}$ as
\[
Z_{i j}
\ = \
\begin{cases}
1, & i \ne j\ \text{and}\ \left(i \in P\ \text{or}\ j \in P\right),\\
0, & \text{otherwise}.
\end{cases}
\]

We first show that $Z$ is fully indecomposable. The zero entries of $Z$ lie either on the diagonal or in the submatrix indexed by $P^c \times P^c$. Since
\[
\abs{P^c}
\ = \
K-r
\ < \
\frac{K}{2},
\]
any zero submatrix contained in $P^c \times P^c$ has size at most $\left(K-r\right) \times \left(K-r\right)$.
If such a zero submatrix has size $a \times b$, then
\[
a + b
\ \le \
2 \left(K-r\right)
\ < \
K.
\]
The remaining zero submatrices are single diagonal entries, and these also satisfy $a+b<K$ for $K \ge 3$. Therefore $Z$ has no zero submatrix of size $a \times b$ with $a+b \ge K$, so $Z$ is fully indecomposable.

Now define
\[
m_1
\ := \
\min_{x \in \left[c+\delta, 1\right]} u \left(x\right)
\ > \
0,
\qquad
m_2
\ := \
\min_{y \in \left[c, 1\right]} u \left(y\right)
\ > \
0.
\]
For $x \in B_\delta$ and $y \in \left[0, 1\right]$, we have from \eqref{eq:sigma-piecewise} that
\[
\sigma^2 \left(x, y\right)
\ \ge \
\min \left\{\frac{1}{\sqrt c} m_1,\ m_1 m_2\right\}
\ =: \
\phi
\ > \
0.
\]
By symmetry, the same bound holds if $y \in B_\delta$ and $x \in \left[0, 1\right]$.

If $Z_{i j} = 1$ and $x \in I_i$, $y \in I_j$, then either $i \in P$ or $j \in P$. Hence either $I_i \subset B_\delta$ or $I_j \subset B_\delta$. Therefore
\[
\sigma^2 \left(x, y\right)
\ \ge \
\phi.
\]

Since $H$ has variance profile
\[
s^{(n)}_{k j}
\ = \
\frac{\E\left[M_{k j}\right]}{m},
\]
Theorem \ref{thm:mean-kernel-limit} gives
\[
n s^{(n)}_{k j}
\ = \
\frac{n}{m}\E\left[M_{k j}\right]
\ \to \
\frac{1}{c}F\left(\frac{k}{n},\frac{j}{n}\right)
\ = \
\sigma^2\left(\frac{k}{n},\frac{j}{n}\right).
\]

Hence, for all sufficiently large $n$,
\[
n s^{(n)}_{k j}
\ \ge \
\frac{\phi}{2}
\]
when $\frac{k}{n} \in I_i$, $\frac{j}{n} \in I_j$, and $Z_{ij}=1$.

Then the partition $\left\{I_j\right\}_{j = 1}^K$ and the fully indecomposable $K \times K$ matrix $Z$ satisfy
\[
\abs{I_j}
\ = \
\frac{1}{K},
\qquad
n s^{(n)}_{k j}
\ \ge \
\frac{\phi}{2} Z_{i j}
\]
whenever $\frac{k}{n} \in I_i$ and $\frac{j}{n} \in I_j$,
satisfying B1.
\\
\smallskip

\textbf{(D) Bounded moments.}
Let $b \in \mathbb N$ be fixed. If $s^{(n)}_{k j} = 0$, then $H_{k j} = 0$, so
\[
\E\left[\abs{H_{k j}}^b\right]
\ = \
0
\ \le \
\mu_b \left(s^{(n)}_{k j}\right)^{b / 2}.
\]
Thus, by symmetry, it is sufficient to consider entries with $k > t_0$ and $j < k$. Write
\[
\bar \lambda_{k j}
\ := \
\E\left[M_{k j}\right].
\]
By definition of $H$,
\[
H_{k j}
\ = \
\frac{P_{k j} - \bar \lambda_{k j}}{\sqrt m},
\qquad
P_{k j}
\ \sim \
{\rm Poisson}\left(\bar \lambda_{k j}\right),
\]
and
\[
s^{(n)}_{k j}
\ = \
\Var\left(H_{k j}\right)
\ = \
\frac{\bar \lambda_{k j}}{m}.
\]

We first note that $\bar \lambda_{k j}$ is bounded above and below uniformly. The upper bound
\[
\bar \lambda_{k j}
\ = \
\E\left[M_{k j}\right]
\ = \
O\left(1\right)
\]
follows from the uniform bound in Theorem \ref{thm:mean-kernel-limit}. For the lower bound, since every existing vertex has degree at least $m$ and $D_{k - 1} \le D_n \le 2mn$, we have
\[
p_{k j}
\ = \
\frac{d_j\left(k - 1\right)}{D_{k - 1}}
\ \ge \
\frac{m}{D_n}
\ \ge \
\frac{1}{2n}.
\]
Therefore
\[
\bar \lambda_{k j}
\ = \
\E\left[M_{k j}\right]
\ = \
m \E\left[p_{k j}\right]
\ \ge \
\frac{m}{2n}
\ = \
\frac{c}{2}.
\]
Hence there are constants $0 < a < A < \infty$ such that
\[
a
\ \le \
\bar \lambda_{k j}
\ \le \
A
\]
uniformly in $j,k,n$ for all non-deterministic entries.

Now let $X \sim {\rm Poisson}\left(\lambda\right)$. First,
\[
\E\left[\abs{X-\lambda}^b\right]
\ \le \
\E\left[\left(X+\lambda\right)^b\right]
\ = \
\sum_{a = 0}^b \binom{b}{a}\lambda^{b-a}\E\left[X^a\right].
\]
The moments of a Poisson random variable satisfy
\[
\E\left[X^a\right]
\ = \
\sum_{r = 0}^a S\left(a,r\right)\lambda^r,
\]
where $S\left(a,r\right)$ are the Stirling numbers of the second kind
(from \cite[p.160]{AC}). Thus
\[
\E\left[\abs{X-\lambda}^b\right]
\ \le \
\sum_{a = 0}^b \binom{b}{a}\lambda^{b-a}\sum_{r = 0}^a S\left(a,r\right)\lambda^r
\ = \
P_b\left(\lambda\right),
\]
where $P_b$ is a polynomial with positive coefficients depending only on $b$. Since $\bar \lambda_{k j} \in \left[a,A\right]$, we get
\[
\E\left[\abs{P_{k j}-\bar \lambda_{k j}}^b\right]
\ \le \
C_b
\ \le \
\mu_b \bar \lambda_{k j}^{b/2}
\]
for some constant $\mu_b$ depending only on $b$.

Therefore
\[
\E\left[\abs{H_{k j}}^b\right]
\ = \
m^{-b / 2}\E\left[\abs{P_{k j}-\bar \lambda_{k j}}^b\right]
\ \le \
\mu_b m^{-b / 2}\bar \lambda_{k j}^{b / 2}
\ = \
\mu_b \left(\frac{\bar \lambda_{k j}}{m}\right)^{b / 2}
\ = \
\mu_b \left(s^{(n)}_{k j}\right)^{b / 2}.
\]
This proves (D).
\\
\smallskip

Combining (A)--(D), we conclude that the variance profile $S^{(n)}$ associated with the independent Poisson matrix $H$ satisfies all assumptions of Proposition \ref{prop:AEK}, and the limiting variance kernel is $\sigma^2$ from \eqref{eq:sigma-piecewise}.

\end{document}